\numberwithin{equation}{section}
\theoremstyle{plain}
\newtheorem{thm}{Theorem}[section]
\newtheorem{lem}[thm]{Lemma}
\newtheorem{prop}[thm]{Proposition}
\newtheorem{cor}[thm]{Corollary}
\theoremstyle{definition}
\newtheorem{defn}[thm]{Definition}
\newtheorem{exmp}[thm]{Example}
\newtheorem{ass}[thm]{Assumption}
\newtheorem{rem}[thm]{Remark}
\def\Ccal{\mathcal{C}}
\def\Dcal{\mathcal{D}}
\def\Ecal{\mathcal{E}}
\def\Gcal{\mathcal{G}}
\def\Hcal{\mathcal{H}}
\def\Lcal{\mathcal{L}}
\def\Mcal{\mathcal{M}}
\def\Scal{\mathcal{S}}
\def\Dbb{\mathbb{D}}
\def\Nbb{\mathbb{N}}
\def\Pbb{\mathbb{P}}
\def\Rbb{\mathbb{R}}
\def\Wbb{\mathbb{W}}
\def\1bb{\mathbbm{1}}
\DeclareMathOperator{\dis}{dis}
\DeclareMathOperator{\diam}{diam}
\DeclareMathOperator{\Tr}{Tr}
\let\epsilon\varepsilon
\title{Degenerate limits for one-parameter families \\ of non-fixed-point diffusions on fractals}
\author{Ben Hambly\footnote{Mathematical Institute, University of Oxford, Woodstock Road, Oxford, OX2 6GG, UK. Email: hambly@maths.ox.ac.uk.}\ \ and Weiye Yang\footnote{Mathematical Institute, University of Oxford, Woodstock Road, Oxford, OX2 6GG, UK. Email: weiye.yang@maths.ox.ac.uk. ORCiD: 0000-0003-2104-1218.}}
\date{} 
\begin{document}
\maketitle

\begin{abstract}
The Sierpinski gasket is known to support an exotic stochastic process called the asymptotically one-dimensional diffusion. 
This process displays local anisotropy, as there is a 
preferred direction of motion which dominates at the microscale, but on the macroscale we see global isotropy in that the process will 
behave like the canonical Brownian motion on the fractal. In this paper we analyse the microscale behaviour of such processes, 
which we call non-fixed point diffusions, for a class of fractals and 
show that there is a natural limit diffusion associated with the small scale asymptotics. This limit process no longer lives on the original 
fractal but is supported by another fractal, which is the Gromov-Hausdorff limit of the original set after a shorting operation is 
performed on the dominant microscale direction of motion. We establish the weak convergence of the rescaled diffusions in a general 
setting and use this to answer a question raised in \cite{hattori1994} about the ultraviolet limit of the asymptotically one-dimensional 
diffusion process on the Sierpinski gasket.
\end{abstract}

\section{Introduction}

The study of diffusion on fractals has largely focused on constructing and analysing the `Brownian motion', that is the stochastic
process generated by the `natural' Laplace operator, on a given fractal. For the Sierpinski gasket, 
the simple symmetric random walk on the natural graph approximation has the property of being decimation invariant, in that it has 
same law when stopped at its visits to coarser approximations and this enabled the initial detailed analysis of the diffusion
and its properties \cite{barlow1988}. However the simple symmetric random walk is not the only possible discrete Markov chain on 
graph approximations to the Sierpinski gasket that can be used to construct a scaling limit. By considering processes invariant under 
reduced symmetry groups it is possible to construct processes such as the rotationally and 
scale invariant but non-reversible $p$-stream diffusions of \cite{kumagai1995}, the not-necessarily scale invariant homogeneous diffusions 
of \cite{heck1998} and the one that will provide a fundamental example for our work, the asymptotically one-dimensional diffusion 
of \cite{hattori1994}. This process is invariant under reflection in the vertical axis but is not scale invariant and displays local anisotropy but 
global isotropy. 

The construction of a canonical Brownian motion on the Sierpinski gasket
was generalized to nested fractals, \cite{lindstrom1990}, through to the large class of finitely ramified fractals, the p.c.f.~self-similar 
sets of \cite{Kigami2001}. In these extensions it became clear that a straightforward approach to the construction of a Brownian motion 
was available through the theory of Dirichlet forms, \cite{fukushima92}, \cite{kusuoka89}, \cite{kigami93}, and the questions of the existence and 
uniqueness of the Brownian motion could 
be reduced to the existence and uniqueness of a fixed point for a finite dimensional renormalization map on the cone of Dirichlet 
forms over the basic cell structure in the fractal, \cite{metz95}, \cite{sabot1997}. When seeking to generalize some of the exotic diffusions
on the Sierpinski gasket it is easiest to work in the reversible case (which excludes the $p$-stream and homogeneous difusions) 
and use the theory of Dirichlet forms. Using this approach the asymptotically one-dimensional diffusion processes were extended to 
some nested fractals in \cite{hambly1998}. Our first aim here will be to generalize this class further to a sub-class of p.c.f.~self-similar
fractals. 

The construction of a Laplace operator on a finitely ramified fractal through an associated sequence of Dirichlet forms just requires that 
a compatible sequence of resistance neworks can be constructed on the graph approximations to the fractal \cite{Kigami2001}. 
The self-similarity allows this to be reduced to the study of a finite dimensional renormalization map on a cone of discrete irreducible 
Dirichlet forms.  The fixed point problem for this renormalization map was the subject of the work of Sabot, Metz and Pierone, see (among others) 
\cite{sabot1997}, \cite{metz95}, \cite{metz1996}, \cite{peirone2014}, \cite{peirone2013}. In solving the uniqueness problem for nested fractals 
\cite{sabot1997} and \cite{metz1996} showed that when considering the renormalization map, although
it is not in gerenal a strict contraction in Hilbert's projective metric on the cone, iterates of the map converge to a non-degenerate 
fixed point under some conditions. Essentially one has to examine the map in the neighbourhood of possible degenerate fixed points on the boundary of the cone of irreducible Dirichlet forms and find conditions that ensure 
there is a move `away' from degeneracy and hence the map can be iterated toward the non-degenerate fixed point. The construction of what we will call here
a one-parameter family of non-fixed point diffusions is then about associating a sequence of resistor networks with the inverse iterates of 
this renormalization map, within a one-parameter family, so that the corresponding networks resemble the degenerate fixed point on the 
small scale but resemble the non-degenerate fixed point on the large scale. Using this approach, \cite{hattori1994}, \cite{hambly1998} 
showed that it was possible to construct a one parameter family of diffusions on the Sierpinski gasket by inverting a one-parameter 
version of the renormalization map and placing suitable conductances on the graphs $G_n$ which approximate the gasket so that $G_0$, the graph given by the triangle, had the effective resistance 
given by $(1,w,w)$. Figure~\ref{fig:oldgas} shows the configuration of resistors and the renormalization of total resistance $R(w) = (12w^2+26w+12)/(w^3+8w^2+15w+6)$ and for the diagonal edges $\gamma(w)=(6w^2+4w)/(w^2+6w+3)$ obtained from the renormalization map.

\begin{figure}[ht]
\centerline{\includegraphics[height=1in]{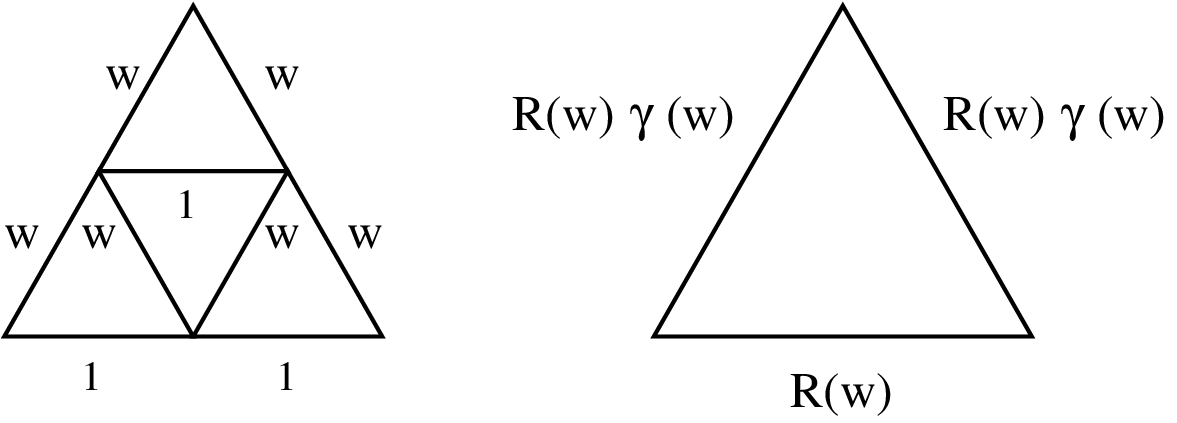}}
\caption{The original weighting of the edges of the Sierpinski gasket for the asymptotically one-dimensional process and the renormalization.}
\label{fig:oldgas}
\end{figure}

Let $\{X^{n,w}_t;t\geq 0\}$ denote the continuous time random walk corresponding to the sequence of resistance networks $G_n$
in which the effective conductance over $G_0$ is given by the triple of conductors $(1,w,w)$. Then, it was shown in \cite{hattori1994}, that 
for any starting weight $0<w< 1$, as $n\to\infty$, 
\[ \{ X^{n,w}_{6^nt};t\geq 0\} \to \{X^{a,w}_t; t\geq 0\}, \]
weakly in $\Dcal_G[0,\infty)$ (the space of c\`adl\`ag functions on the limiting fractal $G$) where $X^{a,w}$ is the asymptotically one-dimensional 
diffusion, a continuous, strong Markov process on $G$. This is called an asymptotically one-dimensional diffusion as the local behaviour of the 
conductances gives the horizontal line segment an increasing weight relative to the two diagonal segments. 

To see the large scale behaviour of this diffusion we can extend the graph $G_0$ to an infinite Sierpinski gasket graph $\tilde{G}_0$, 
in which each copy of the basic triangle has conductance $(1,w,w)$, and proceed to construct
the limiting asymptotically one-dimensional diffusion $\{\tilde{X}^{a,w}_t;t\geq 0\}$ on the infinite fractal $\tilde{G}$.  Then, in \cite{hambly1998}, it is shown that under
the classical scaling for the Brownian motion on $\tilde{G}$ we have, as $n\to\infty$,  
\[ \{2^{-n}\tilde{X}^{a,w}_{5^nt}; t\geq 0\} \to \{B_t;t\geq 0\}, \]
weakly in $C_{\tilde{G}}[0,\infty)$ (the space of continuous functions on $\tilde{G}$), where $B$ is the Brownian motion on $\tilde{G}$.
This is a homogenization result not seen in Euclidean space as the geometry of the fractal causes the homogenization. 
If we think of a diffusing particle, even though it moves much more frequently in a horizontal direction, in order to cross large 
regions it must make vertical moves and thus on the very large scale it behaves like the Brownian motion.

In \cite{hattori1994} it is observed that, even in the situation where there is no fixed point for the renormalization map, 
this procedure could lead to a diffusion on the $O(1)$ scale which was non-degenerate even though at both small and large scale the diffusion 
is degenerate. This was illustrated in the case of $abc$-gaskets where the ratios between $a,b$ and $c$ are such that there is no fixed point 
\cite{HW97}. 

It is clear that for the asymptotically one-dimensional diffusion on the Sierpinski gasket on the small scale there is a separation of time 
scales in that there will be many more horizontal steps than vertical ones. The question of the so-called 
ultraviolet limit of these processes was raised in \cite{hattori1994}. Is there a rescaling of the diffusion process over short time 
and space scales which will produce a non-trivial object in the short scale limit? 
In other words does there exist a scaling $\lambda \in (0,1)$ and a non-trivial process $X^b$ such that
\[ \{ 2^n X^{a,w}_{\lambda^n t}; t\geq 0\} \to \{X^b_t; t\geq 0\} \]
weakly as $n\to\infty$? Our aim in this paper is to consider a class of 
fractals for which there are asymptotically lower-dimensional processes and discuss their short time
scaling limits. The degenerate diffusions which arise could be called the ultraviolet limits for the asymptotically lower-dimensional diffusions.

In order to examine the short time scaling limit we show that, by thinking of the fractal as a resistance form, a metric space equipped 
with a resistance metric, and shorting the high conductance edges, there is Gromov-Hausdorff convergence of the fractals 
to a limit space. We can then exploit the recent work of \cite{croydon2016a} (extending that in \cite{croydon2016}) to establish that there 
is weak convergence of the rescaled diffusions to a diffusion process on this limit fractal. This limit fractal is not a p.c.f.~self-similar 
set, but is typically a simple fractal space \cite{nyberg95}, and the theory for p.c.f.~fractals is easily extended to include such limit objects. 
We note that this limit construction provides a case where we fuse each element of a countable collection of subsets of a space, going beyond 
the fusing of a single subset or finite pairs of points as discussed in \cite{kigami2012}, \cite{croydon2012}, \cite{croydon2016a}. 

In our approach to these non-fixed point diffusions we write the conductance matrix in a different form to those
used previously in that we fix (in Figure~\ref{fig:oldgas}) what was $w$ to be 1 and let the edge which was 1 have conductance 
$v$ for $v>1$. Examples are shown in Figures~\ref{fig:gasblob} and~\ref{fig:vicsek}.
This shows that, for the Sierpinski gasket, as we look over smaller scales the horizontal edge has conductance
which goes to infinity and thus in the limit we expect that the horizontal edges will be shorted, leading to an object which is shown 
on the right side of Figure~\ref{fig:gasblob}. The analysis of the fixed point problem in \cite{sabot1997} required the analysis of such
a shorted graph. Here we establish a general result about the weak convergence of diffusions on a class of 
fractals as such a parameter $v$ tends  to $\infty$. We work in the framework of (generalized) p.c.f.~self-similar sets but restrict the class 
to those which support a resistance form, and whose shorted versions also support a resistance form. We will view our sequence of 
fractals with the resistance metric as metric spaces and embed them along with the limit into a metric space $\mathcal M$. We 
prove the weak convergence in $\Dcal_\Mcal[0,\infty)$, the space of c\`{a}dl\`{a}g processes on $\Mcal$.

%

We can use our main result, Theorem~\ref{thm:main}, in the case of the Sierpinski gasket to analyse the short time behaviour of 
the asymptotically one-dimensional diffusion. We will consider the diffusion over small scales and times and prove the weak convergence 
to a diffusion on the shorted gasket, the limit of the graphs on the right hand side of Figure~\ref{fig:gasblob}. We will think of the 
gasket $G$ now as a self-sufficient metric space with a resistance metric $R^v$ determined by the conductance parameter $v$. 
Let $\psi_1,\psi_2,\psi_3$ denote the three similitudes used in defining the Sierpinski gasket and we assume that they have fixed 
points $p_1$, $p_2$ and $p_3$, respectively. We assume that for $G_0$, the triangle graph on the points $p_i$, the edge from $p_1$ 
to $p_3$ is of resistance $v$ and the other two edges are of resistance 1. We write $T_A(X)=\inf\{t> 0: X_t \in A\}$ for the hitting 
time of a set $A$ by the process $X$. We write $L_n$ for the image of the line joining $p_1, p_3$ in the triangle $\psi_2^{n}(G_0)$. That is the 
bottom edge of the triangle with Euclidean size $2^{-n}$ with top vertex at $p_2$.

\begin{thm}\label{thm:sg}
Let $X^{a,v}$ denote the asymptotically one-dimensional diffusion on the metric space $(G,R^v)$ with conductances $(1,1,v)$ on $G_0$ 
and $X^{a,v}_0=p_2$ and let  $X^s$ be the diffusion on the `shorted gasket', that is the limit of the graphs shown in 
Figure~\ref{fig:gasblob}, with $X^s_0=0$, where $0$ denotes the top vertex with only one edge into it and the 
base vertex is denoted by $1$. Then, there exists a constant $\sigma>0$ such that 
\[  \{ \psi_2^{-n}(X^{a,v}_{(9/2)^{-n}t});0\leq (9/2)^{-n}t\leq T_{L_n}(X^{a,v}) \} \to \{X^s_{\sigma t};
0\leq t\leq T_1(X^s)\}, \]
weakly in $\Dcal_\Mcal[0,\infty)$.
\end{thm}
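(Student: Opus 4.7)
The plan is to derive Theorem~\ref{thm:sg} as an application of the general result, Theorem~\ref{thm:main}, to the one-parameter family of asymptotically one-dimensional diffusions $(X^{a,v})_{v \geq 1}$ on the Sierpinski gasket. The central observation is that zooming into the top vertex $p_2$ by $\psi_2^{-n}$ in the short-time regime corresponds, via the self-similar construction, to running the diffusion at unit scale on a gasket whose conductance triple $(1,1,v_n)$ has horizontal parameter $v_n \to \infty$. This is precisely the parameter-growth limit to which Theorem~\ref{thm:main} applies, and the shorted gasket of Figure~\ref{fig:gasblob} is its Gromov-Hausdorff limit under the resistance metric.

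I would first verify the hypotheses of Theorem~\ref{thm:main} in this concrete setting: for each $v \geq 1$ the Sierpinski gasket with conductances $(1,1,v)$ is a p.c.f.\ self-similar fractal supporting a resistance form, and the shorted gasket is a simple fractal in the sense of~\cite{nyberg95} that supports a resistance form in the extended framework described in the introduction. This fixes the ambient space $\Mcal$ and the limiting process $X^s$. Next, using the inverse renormalization of Figure~\ref{fig:oldgas} together with the scaling of $X^{a,v}$ built into its construction on the networks $G_n$, I would identify
\[
\{\psi_2^{-n}(X^{a,v}_{(9/2)^{-n}t}) : 0 \leq (9/2)^{-n}t \leq T_{L_n}(X^{a,v})\}
\]
with the asymptotically one-dimensional diffusion on the unit gasket carrying the conductance triple $(1,1,v_n)$, started at $p_2$ and killed on first hitting $L_0$, where $v_n$ is obtained from $v$ by $n$-fold inverse renormalization and $v_n \to \infty$ as $n \to \infty$. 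The factor $(9/2)^{-n}$ arises as the product of the resistance renormalization factor across the top triangle ($\psi_2$-cell) and the mass renormalization factor along this one distinguished cell at each level.

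With the problem placed inside the framework of Theorem~\ref{thm:main}, weak convergence in $\Dcal_\Mcal[0,\infty)$ of the rescaled killed processes to $X^s$ follows, with the constant $\sigma > 0$ absorbing the discrepancy between the intrinsic time normalization delivered by Theorem~\ref{thm:main} and the explicit $(9/2)^{-n}$ scaling used in the statement. The stopping constraint $t \leq T_{L_n}(X^{a,v})$ corresponds under $\psi_2^{-n}$ to killing at the base $L_0$ of the unit gasket; in the limit this becomes $t \leq T_0(X^s)$, since $L_0$ collapses to the single base vertex $0$ under the shorting that produces the limit fractal. The hardest step I anticipate is the verification stage: confirming that the shorted gasket really lies in the extended class covered by Theorem~\ref{thm:main}, which requires controlling the fusion of a countable family of collapsed segments beyond what is treated in~\cite{kigami2012} and~\cite{croydon2012}, and then tracking the resistance and mass renormalization factors precisely enough to pin down $\sigma$.
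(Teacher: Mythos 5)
Your proposal follows essentially the same route as the paper: identify the process viewed through $\psi_2^{-n}$ and killed at $L_n$ with the diffusion $X^{\alpha^{-n}(v)}$ at unit scale killed at $L_0$ (time-changed by the product of the per-level mass factor $3^n$ and the resistance factor $\rho_n(v)$), apply Theorem~\ref{thm:main} along the sequence $\alpha^{-n}(v)\to\infty$, and let $\sigma=\lim_n \rho_n(v)/\rho_G^n$ (which exists by \eqref{eq:rholim}) absorb the discrepancy between the exact time change and the clean $(9/2)^{-n}=(3\rho_G)^{-n}$ scaling. This matches the paper's proof in both decomposition and the role of each ingredient, so no further comparison is needed.
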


\begin{figure}[ht]
\centerline{\includegraphics[height = 2in]{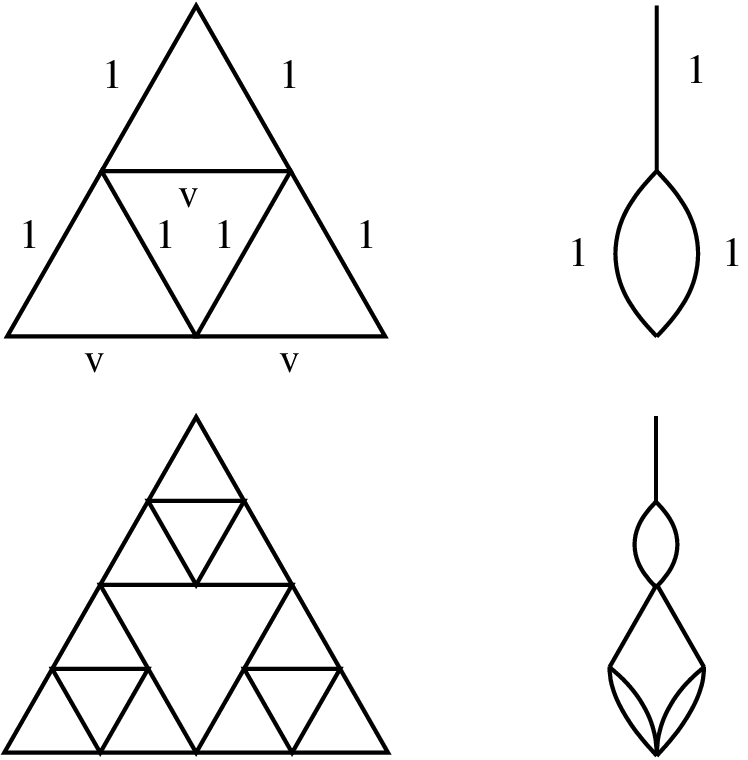}}
\caption{The first two levels of the Sierpinski gasket and the corresponding levels of the limit fractal obtained as $v\to\infty$.}
\label{fig:gasblob}
\end{figure}

In the setting in which we work the limit spaces will have an associated resistance form and it is straightforward to determine some of
the properties of the limit diffusion. In particular it is not difficult to see that the spectral dimension for the shorted gasket 
is $2\log{3}/\log{9/2}$. In the paper \cite{hambly2002} this was shown to be the local spectral dimension (the exponent for the 
short time asymptotic decay of the on-diagonal heat kernel) for the asymptotically one-dimensional diffusion on the Sierpinski gasket.

The outline of the paper is as follows. We begin by giving the framework in which we will work in Section~2. As the shorted gasket is
not a p.c.f.~self-similar set we will need a small extension of the class of p.c.f.~self-similar sets to discuss our limit processes. 
In Section~3 we introduce the class of locally degenerate diffusions on fractals. We then construct the limit spaces that will support 
our limiting diffusions in Section~4. 
In order to prove our result we use Gromov-Hausdorff-vague convergence and hence we construct a suitable space in which to embed the 
sequence of fractals and the limiting space. We do this in Section~5 and establish the weak convergence by showing how to employ the 
result of \cite{croydon2016a} (simplifying substantially our original 30 page direct proof of the weak convergence).

\section{Preliminaries}

We slightly generalize the idea of a post critically finite self-similar set \cite{Kigami2001}. A generalization which goes beyond what we 
introduce here can be found in \cite{tep2008}. 

\begin{defn}
Fix some $N \in \Nbb$. Let $F = (F,d)$ be a metric space and for $i = 1,\ldots,N$ let $\psi_i: F \to F$ be a function. 
Then $\Scal = (F,(\psi_i)_{1 \leq i \leq N})$ is a \textit{self-similar structure} if
\begin{enumerate}
\item $(F,d)$ is compact,
\item The $\psi_i$ are continuous injections from $F$ to itself with
\begin{equation*}
F = \bigcup_{i=1}^N \psi_i (F),
\end{equation*}
\item There exists $\delta > 0$ such that
\begin{equation*}
d(\psi_i(x),\psi_i(y)) \leq (1-\delta) d(x,y)
\end{equation*}
for all $x,y \in F$ and $1 \leq i \leq N$.
\end{enumerate}
\end{defn}

Let $I := \{ 1,\ldots, N \}$, and define the word spaces $\Wbb_n = I^n$, $\Wbb = I^\Nbb$. Endow $\Wbb$ with the product topology, and 
let $\sigma$ be the left shift operator, which maps $\Wbb$ to $\Wbb$ or $\Wbb_n$ to $\Wbb_{n-1}$. That is, if $w = w_1w_2w_3\ldots$ then
$\sigma(w) = w_2w_3\ldots$.
For $w = w_1 \ldots w_n \in \Wbb_n$ we now define
\begin{equation*}
\psi_w := \psi_{w_1} \circ \cdots \circ \psi_{w_n}.
\end{equation*}
For $A \subseteq F$ let $A_w := \psi_w(A)$. If $n \geq 1$ and $w \in \Wbb$ (or $w \in \Wbb_m$ with $m \geq n$) then define
\begin{equation*}
w|n = w_1 \ldots w_n \in \Wbb_n.
\end{equation*}
For each $w \in \Wbb$, there exists $x_w \in F$ such that
\begin{equation*}
\bigcap_{n=1}^\infty \psi_{w|n}(F) = \{ x_w \}.
\end{equation*}
and we write $\pi: \Wbb \to F$ for the mapping $\pi(w) := x_w$. As in \cite[Lemma 5.10]{barlow1998}, we have the following result:

\begin{lem}\label{pimap}
The function $\pi$ is the unique mapping $\pi: \Wbb \to F$ such that for all $w \in \Wbb$ and $i \in I$,
\begin{equation*}
\pi(i \cdot w) = \psi_i(\pi(w))
\end{equation*}
where $i \cdot w \in \Wbb$ denotes the concatenation of $i$ with $w$. Moreover, $\pi$ is continuous and surjective.
\end{lem}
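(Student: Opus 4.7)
The plan is to verify four claims in turn: the intertwining identity $\pi(i\cdot w)=\psi_i(\pi(w))$, uniqueness of a map with that property, continuity of $\pi$, and surjectivity of $\pi$. Throughout I will rely on Cantor's nested intersection theorem (which gives the one-point nature of $\bigcap_n\psi_{w|n}(F)$ once one invokes compactness of $F$ and the uniform contraction estimate $\diam\psi_{w|n}(F)\le(1-\delta)^n\diam F$) and the two defining features of the $\psi_i$: injectivity and the covering property $F=\bigcup_{i\in I}\psi_i(F)$.

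For the intertwining identity I would use the relation $(i\cdot w)|(n+1)=i\cdot(w|n)$, hence $\psi_{(i\cdot w)|(n+1)}=\psi_i\circ\psi_{w|n}$. Since $\psi_i$ is injective it commutes with the nested intersection, so
\[
\{\pi(i\cdot w)\}=\bigcap_{n\ge 1}\psi_{(i\cdot w)|n}(F)=\psi_i\!\left(\bigcap_{n\ge 1}\psi_{w|n}(F)\right)=\{\psi_i(\pi(w))\}.
\]
For uniqueness, any $\pi':\Wbb\to F$ with the same equivariance satisfies, on $n$-fold iteration, $\pi'(w)=\psi_{w|n}(\pi'(\sigma^n w))\in\psi_{w|n}(F)$ for every $n$, so $\pi'(w)$ lies in the singleton $\bigcap_n\psi_{w|n}(F)=\{\pi(w)\}$.

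For continuity, if $w^{(k)}\to w$ in the product topology, then for each fixed $n$ we have $w^{(k)}|n=w|n$ for all $k$ sufficiently large, so both $\pi(w^{(k)})$ and $\pi(w)$ lie in $\psi_{w|n}(F)$, a set of diameter at most $(1-\delta)^n\diam F$; sending $k\to\infty$ and then $n\to\infty$ gives $\pi(w^{(k)})\to\pi(w)$. For surjectivity, given $x\in F$ I would construct a symbolic preimage inductively: by the covering property pick $w_1\in I$ with $x\in\psi_{w_1}(F)$, and since $\psi_{w_1}$ is a continuous injection of the compact space $F$ it is a homeomorphism onto its image, so $y_1:=\psi_{w_1}^{-1}(x)\in F$ is well defined. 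Iterating the same argument on $y_1,y_2,\ldots$ produces $w=w_1w_2\cdots\in\Wbb$ with $x\in\psi_{w|n}(F)$ for every $n$, whence $x=\pi(w)$.

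The one step where care is needed is the commutation of $\psi_i$ with the nested intersection in the equivariance argument: this is where injectivity of the $\psi_i$ is genuinely used (in general one only has the one-sided inclusion). Everything else is essentially forced by the defining property of $\pi$ together with Cantor's theorem and the IFS covering property, so I expect no further obstacle.
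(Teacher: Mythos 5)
Your proof is correct and follows essentially the same standard argument as the reference the paper defers to for this lemma (\cite[Lemma 5.10]{barlow1998}): Cantor's nested intersection theorem for the well-definedness, the prefix identity for equivariance, iteration of equivariance for uniqueness, the diameter bound $(1-\delta)^n\diam F$ for continuity, and inductive choice of symbols for surjectivity. The only minor remark is that the commutation $\bigcap_n\psi_i(A_n)=\psi_i\bigl(\bigcap_n A_n\bigr)$ for a decreasing sequence of compact sets already follows from compactness alone (the fibres $\psi_i^{-1}(\{y\})\cap A_n$ form a nested family of non-empty compacta), so injectivity is a convenience rather than a necessity there; your argument via injectivity is nonetheless perfectly valid.
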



\begin{defn}
For a self-similar structure $\Scal = (F,(\psi_i)_{1 \leq i \leq N})$, let
\begin{equation*}
B(\Scal) = \bigcup_{i,j,i\neq j} F_i \cap F_j.
\end{equation*}
This is the set of points that exist in the ``overlap'' of the images of two distinct $\psi_i$. Let
\begin{equation*}
\Gamma(\Scal) = \pi^{-1}(B(\Scal))
\end{equation*}
be the set of words corresponding to $B(\Scal)$. This is called the \textit{critical set} of $\Scal$. Let
\begin{equation*}
P(\Scal) = \bigcup_{n=1}^\infty \sigma^n(\Gamma(\Scal))
\end{equation*}
be the \textit{post-critical set} of $\Scal$.
\end{defn}
\begin{defn}[Generalized p.c.f.s.s.~sets]\label{genPCF}
A self-similar structure $(F,(\psi_i)_{1 \leq i \leq N})$ is called \textit{generalized post-critically finite} if $\pi(P(\Scal))$ is finite.
A metric space $(F,d)$ is a \textit{generalized post-critically finite self-similar} set, or \textit{generalized p.c.f.s.s.}~set, if there exists 
a generalized post-critically finite self-similar structure $(\psi_i)_{1 \leq i \leq N}$ on $F$.
\end{defn}
Now $\pi(P(\Scal))$ has two equivalent reformulations given below:
\begin{equation*}
\begin{split}
\pi(P(\Scal)) &= \left\{ x \in F: \exists w,v \in \bigcup_{n \geq 1} \Wbb_n,\ w \neq v,\ \psi_w(x) \in F_v \right\}\\
&= \left\{ x \in F: \exists w \in \bigcup_{n \geq 1} \Wbb_n,\ \psi_w(x) \in B(\Scal) \right\}.
\end{split}
\end{equation*}

\begin{rem}
Notice that Definition \ref{genPCF} differs from the definition of a p.c.f.s.s.~set (see \cite{Kigami2001}), which has the stronger condition of $P(\Scal)$ itself 
being finite.
\end{rem}

\begin{exmp}
Any p.c.f.~self-similar set is clearly a generalized p.c.f.s.s.~set. Also the `shorted gasket' of Figure~\ref{fig:gasblob} and the Diamond 
Hierarchical Lattice studied in \cite{hambly2010} are examples of generalized, but not strictly, p.c.f.s.s.~sets.
\end{exmp}

\begin{defn}
Let $(F,(\psi_i)_{1 \leq i \leq N})$ be a generalized p.c.f.s.s.~set. For $n \geq 0$ we set
\begin{equation*}
\begin{split}
P^{(n)} &= \{ w \in \Wbb: \sigma^n w \in P(\Scal) \},\\
F^n &= \pi(P^{(n)}).
\end{split}
\end{equation*}
Any set of the form $\psi_w(F)$, $w \in \Wbb_n$, we call an \textit{$n$-complex}. Any set of the form $\psi_w(F^0)$, 
$w \in \Wbb_n$, we call an \textit{$n$-cell}.
\end{defn}

The set  $F^0$ is the ``boundary'' of $F$ and $(F^n)_{n \geq 0}$ is an increasing sequence of subsets of $F$ 
where for $n \geq 0$,
\begin{equation*}
F^{n+1} = \bigcup_{i = 1}^N \psi_i(F^n).
\end{equation*}
It can then be easily proven, by the compactness of $F$ and the contraction property of the functions $(\psi_i)$, that
\begin{equation}\label{Vn}
F = \overline{\bigcup_{n \geq 0} F^n}.
\end{equation}

\subsection{Measures on $F$}

We first define a Bernoulli measure on $\Wbb$, and then push it forward onto $F$. Let $\theta = (\theta_1, \ldots, \theta_N)$ 
be a vector such that $\sum_{i=1}^N \theta_i = 1$ and $0 < \theta_i < 1$ for each $i$. Writing 
$\theta_w := \theta_{w_1}\theta_{w_2}\ldots \theta_{w_n}$ for $w \in \Wbb_n$, and defining random variables 
$\xi_n: \Wbb \to I$ by $\xi_n(w) = w_n$, let $\tilde{\mu}_\theta$ be the unique Borel probability measure on $\Wbb$ satisfying
\begin{equation*}
\tilde{\mu}_\theta (\{ \xi_1 = w_1,\ldots , \xi_n = w_n \}) = \theta_w
\end{equation*}
for any $n \in \Nbb$ and $w \in \Wbb_n$. 

\begin{defn}
We define a \textit{Bernoulli measure} $\mu_\theta$ on $F$ to be the pushforward (through our canonical mapping $\pi$) of 
the corresponding Bernoulli measure on $\Wbb$:
\begin{equation*}
\mu_\theta := \tilde{\mu}_\theta \circ \pi^{-1}.
\end{equation*}
\end{defn}
We also define corresponding measures on each of our approximating sets $F^n$.
\begin{defn}\label{approxmeasure}
For a fixed $\theta = (\theta_1, \ldots, \theta_N)$ such that $\sum_{i=1}^N \theta_i = 1$ and $0 < \theta_i < 1$ for each $i$, let $\mu_n$ 
be the measure on $F^n$ given by
\begin{equation*}
\mu_n(x) = (\# F^0)^{-1} \sum_{w \in \Wbb_n} \theta_w \1bb_{F^0_w}(x).
\end{equation*}
\end{defn}

Note that $\mu_n$ charges every point of $F^n$ and we have \cite[Lemma 5.29]{barlow1998}.
\begin{lem}\label{measweak}
$\mu_n$ is a probability measure on $F^n$, and $\mu_n \to \mu_\theta$ weakly as $n \to \infty$.
\end{lem}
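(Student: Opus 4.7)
My plan is to prove the two assertions separately, with the probability statement being immediate and the weak convergence following from a standard uniform-continuity argument based on the contraction property.

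For the probability claim, I would begin by noting that each $\psi_w$ is a continuous injection, so $\#F^0_w = \#F^0$ for every $w \in \Wbb_n$. Then
\[
\sum_{x \in F^n} \mu_n(x) \;=\; (\#F^0)^{-1} \sum_{w \in \Wbb_n} \theta_w \sum_{x \in F^n} \1bb_{F^0_w}(x) \;=\; (\#F^0)^{-1} \sum_{w \in \Wbb_n} \theta_w \cdot \#F^0 \;=\; \sum_{w \in \Wbb_n} \theta_w.
\]
The last sum equals $1$ because $\sum_i \theta_i = 1$ factorizes as $\prod_{k=1}^n \sum_i \theta_i$. Non-negativity and support in $F^n$ are clear from the definition.

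For the weak convergence, I would fix a bounded continuous $f: F \to \Rbb$ and, using compactness of $F$, appeal to its uniform continuity. Given $\epsilon > 0$, the contraction property yields $\diam(F_w) \leq (1-\delta)^n \diam(F)$ for $w \in \Wbb_n$, so for $n$ large enough the oscillation of $f$ on each $n$-complex $F_w$ is less than $\epsilon$. I would then pick an arbitrary representative $x_w \in F_w$ for each $w \in \Wbb_n$ and compare both integrals to the common proxy sum $S_n(f) := \sum_{w \in \Wbb_n} \theta_w f(x_w)$. On the one hand,
\[
\Bigl| \int f\, d\mu_n - S_n(f) \Bigr| \;=\; \Bigl| \sum_{w \in \Wbb_n} \theta_w \Bigl( (\#F^0)^{-1}\sum_{p \in F^0} f(\psi_w(p)) - f(x_w) \Bigr) \Bigr| \;\leq\; \epsilon,
\]
since each inner average is an average of values of $f$ taken at points of $F_w$. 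On the other hand, writing $[w] := \{v \in \Wbb : v|n = w\}$ and using $\pi([w]) \subseteq F_w$,
\[
\Bigl| \int f\, d\mu_\theta - S_n(f) \Bigr| \;=\; \Bigl| \sum_{w \in \Wbb_n} \int_{[w]} \bigl(f(\pi(v)) - f(x_w)\bigr)\, d\tilde{\mu}_\theta(v) \Bigr| \;\leq\; \epsilon \sum_{w \in \Wbb_n} \tilde{\mu}_\theta([w]) \;=\; \epsilon,
\]
using $\tilde{\mu}_\theta([w]) = \theta_w$. Combining these two estimates via the triangle inequality gives $|\int f\, d\mu_n - \int f\, d\mu_\theta| \leq 2\epsilon$ for all sufficiently large $n$, proving weak convergence by the Portmanteau theorem.

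The only potential obstacle is a mild bookkeeping one: points in $F^n$ may lie in several of the sets $F^0_w$ because of overlaps, so one must be careful that mass is defined additively rather than by assignment. However, since $\mu_n$ is written as a sum over $w$ of indicator terms, the calculation above handles this transparently and no additional work is required. All other steps rely only on uniform continuity, compactness, and the contraction property already built into the definition of a self-similar structure.
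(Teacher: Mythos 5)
Your proof is correct. The paper gives no argument of its own here --- it simply defers to \cite[Lemma 5.29]{barlow1998} --- and your self-contained proof is exactly the standard argument that citation stands in for: total mass via injectivity of $\psi_w$ (so $\#F^0_w=\#F^0$ and $\sum_w\theta_w=1$), then comparison of both integrals to the cylinder proxy $S_n(f)$ using uniform continuity together with $\diam(F_w)\leq(1-\delta)^n\diam(F)$ and $\tilde\mu_\theta([w])=\theta_w$; the overlap issue you flag is indeed harmless since the definition of $\mu_n$ is additive over words rather than assigning each point to a single cell.
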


\subsection{The renormalization map}

We first recall the concept of the trace of a Dirichlet form from \cite[Chapter 4]{barlow1998}.

\begin{defn}
Let $G$ be a set and $(\Ecal,\Dcal)$ a Dirichlet form defined on $G$ and $H \subseteq G$. Define a Dirichlet form $(\tilde{\Ecal},\tilde{\Dcal})$ on $H$ by
\begin{equation*}
\tilde{\Ecal}(h,h) = \inf\left\{ \Ecal(g,g):g \in \Dcal,\ g|_H = h \right\}.
\end{equation*}
and $\tilde{\Dcal}$ is the set of functions $h$ such that the above is finite. The form $\tilde{\Ecal}$ is called the \textit{trace} of $\Ecal$ on $H$ and is denoted by $\Tr(\Ecal | H)$.
\end{defn}

This leads naturally to the notion of \textit{effective resistance} with respect to a Dirichlet form:
\begin{defn}
Let $G$ be a set and $(\Ecal,\Dcal)$ a Dirichlet form defined on $G$. Let $H_1$ and $H_2$ be disjoint subsets of $G$. The \textit{(effective) resistance} between $H_1$ and $H_2$ is
\begin{equation*}
R_\Ecal(H_1,H_2) = \left( \inf\left\{ \Ecal(g,g): g \in \Dcal,\ g|_{H_1} = 0,\ g|_{H_2} = 1 \right\} \right)^{-1},
\end{equation*}
with the convention that $0^{-1} = +\infty$. In particular, if $H_i = \{ x_i \}$ for $x_i \in G$, $i = 1,2$, then let $R_\Ecal(x_1,x_2) = R_\Ecal(\{x_1\},\{x_2\})$. If this defines a metric on $G$ (after extending it such that $R_\Ecal(x,x) = 0$ for all $x \in G$) then we call it the \textit{resistance metric} associated with $(\Ecal,\Dcal)$.
\end{defn}

In this section we seek to define a Dirichlet form on each of the $F^n$ respectively such that the sequence of Dirichlet forms 
is ``nested'', in the sense of taking traces. From this sequence we can construct a Dirichlet form on $F$ as a limit. We follow 
closely the approach given in Barlow \cite{barlow1998}.

Let $(F,(\psi_i)_{1 \leq i \leq N})$ be a generalized p.c.f.s.s.~set with a Bernoulli measure $\mu = \mu_\theta$. Define $r = (r_1,\ldots,r_N)$ 
to be a \textit{resistance vector} of positive numbers. Each $r_i$ roughly corresponds to the ``size'' of the subset $F_i \subseteq F$. 
For $n \geq 0$ let $\Dbb_n$ be the set of conservative Dirichlet forms on $F^n$. Observe that since $F^n$ is finite, $\Dbb_n$ is in 
direct correspondence with the set of conductance matrices on $F^n$. For $\Ecal \in \Dbb_0$ we write
\[ \Ecal(f,g) = \frac12 \sum_{x,y\in F^0} a(x,y) (f(x)-f(y))(g(x)-g(y)) = -f^TAg, \]
where $A = (a(x,y))_{x,y\in F^0}$ is a matrix of conductances with the diagonal terms given by $a(x,x) =-\sum_{y\neq x} a(x,y)$.

\begin{defn}\label{operations}
We define the following maps as in \cite{barlow1998}:
\begin{enumerate}
\item The \textit{replication} operation $R: \Dbb_0 \to \Dbb_1$ is given by
\begin{equation*}
R(\Ecal)(f,g) = \sum_{i=1}^N r_i^{-1} \Ecal(f \circ \psi_i , g \circ \psi_i).
\end{equation*}
The subset $F^1$ can be viewed as $N$ copies of $F^0$. The Dirichlet form $R(\Ecal)$ is simply the sum of $\Ecal$ evaluated on each of 
these copies, weighted by $r$.
\item The \textit{trace} operation $T: \Dbb_1 \to \Dbb_0$ is given by
\begin{equation*}
T(\Ecal) = \Tr(\Ecal | F^0).
\end{equation*}
\item The \textit{renormalization map} is $\Lambda = T \circ R: \Dbb_0 \to \Dbb_0$.
\end{enumerate}
\end{defn}

\begin{rem}
Note that $\Lambda$ is positively homogeneous: if $c > 0$ then $\Lambda(c\Ecal) = c\Lambda(\Ecal)$. However it is not in general 
linear, because $T$ is non-linear.
\end{rem}

%

The replication operation $R$ can be regarded as a mapping $R: \bigcup_n \Dbb_n \to \bigcup_n \Dbb_n$ such that 
if $\Ecal \in \Dbb_n$ then $R(\Ecal) \in \Dbb_{n+1}$ with
\begin{equation*}
R(\Ecal)(f,g) = \sum_{i=1}^N r_i^{-1} \Ecal(f \circ \psi_i , g \circ \psi_i).
\end{equation*}
Notice now that if $\Ecal \in \Dbb_0$, then $R^n(\Ecal) \in \Dbb_n$ with
\begin{equation*}
R^n(\Ecal)(f,g) = \sum_{w \in \Wbb_n} r_w^{-1} \Ecal(f \circ \psi_w , g \circ \psi_w).
\end{equation*}
where $r_w := r_{w_1} \ldots r_{w_n}$.

The fixed point problem is to find eigenvectors of $\Lambda$, that is, Dirichlet forms $\Ecal \in \Dbb_0$ such that there exists $\rho > 0$ with
\begin{equation*}
\Lambda(\Ecal) = \rho^{-1} \Ecal.
\end{equation*}
For such a form, let $\Ecal^{(0)} := \Ecal \in \Dbb_0$ and for $n \geq 1$ put $\Ecal^{(n)} := \rho^n R^n(\Ecal) \in \Dbb_n$. Thus we have 
a nested sequence: if $m \leq n$ then
\begin{equation*}
\Tr(\Ecal^{(n)}|F^m) = \Ecal^{(m)}.
\end{equation*}

\begin{defn}
Let $\Ecal \in \Dbb_0$ be an eigenvector of $\Lambda$ as above. If $0 < r_i\rho^{-1} < 1$ for all $1 \leq i \leq N$, then we say that 
$(\Ecal,r)$ is a \textit{regular fixed point}. If $\Ecal$ is irreducible (see \cite[Definition 4.3]{barlow1998}), then we call $\Ecal$ a \textit{non-degenerate fixed point} of $\Lambda$.
\end{defn}

The term ``non-degenerate'' corresponds to the irreducibility of the Markov process associated with $\Ecal$. If we were to take a 
degenerate fixed point and construct Dirichlet forms $\Ecal^{(n)}$ as before, then all of our associated Markov processes would 
be restricted to only a small part of the fractal. We thus restrict our attention to non-degenerate fixed points.

\begin{exmp}\label{Sierpexmp}
Let $F$ be the Sierpinski gasket. $F^0 \subseteq F$ is then a set of three points, the outermost vertices of the gasket. Labelling 
these vertices $F^0 =: \{ 1,2,3 \}$, $\Lambda$ has a non-degenerate fixed point $\Ecal_A$ where $a_{ij} = 1$ for 
$i \neq j$, $i,j \in F^0$ and $\rho = \frac{5}{3}$. If instead we let $a_{12} = 1$, $a_{23} = 0$, $a_{31} = 0$ then we see that 
$\Ecal_A$ is again a fixed point, this time degenerate, with $\rho = 2$.
\end{exmp}

\subsection{Fixed-point diffusions}\label{fpd}

We can now define a Dirichlet form on $F$, closely following the approach given in \cite{Kigami2001}. 
Let $\Scal = (F,(\psi_i)_{1 \leq i \leq N})$ be a (connected) generalized p.c.f.s.s.~set and $r$ a resistance vector. Let $\mu = \mu_\theta$ 
be a Bernoulli measure on $F$. Let $r_{\min} = \min_ir_i$ and $r_{\max} = \max_ir_i$, and let $\theta_{\min}$ and $\theta_{\max}$ be 
defined likewise. For each $n \geq 0$ let $\mu_n$ be the measure on $F^n$ given by Definition \ref{approxmeasure}. Suppose the
renormalization map $\Lambda$ has a non-degenerate regular fixed point $\Ecal^{(0)} \in \Dbb_0$ with eigenvalue 
$\rho^{-1}$, $\rho > 0$. Construct the nested sequence of Dirichlet forms $(\Ecal^{(n)})_n$ as
\begin{equation*}
\Ecal^{(n)}(f,g) := \rho^n\sum_{w \in \Wbb_n} r_w^{-1} \Ecal^{(0)}(f \circ \psi_w , g \circ \psi_w), \quad f,g \in C(F^n).
\end{equation*}
For each $n \geq 0$, let $X^n = (X^n_t)_{t \geq 0}$ be the Markov process associated with $\Ecal^{(n)}$ on $\Lcal^2(F^n,\mu_n)$.
From now on we identify real valued functions on $F$ with their restrictions to $F^n$ to simplify notation.

Observe that if $f$ is a real-valued function on $F$, then the sequence $(\Ecal^{(n)}(f,f))_n$ is non-decreasing by the properties of 
the trace operator and we can define our limiting form
\begin{equation*}
\begin{split}
&D = \left\{ f \in C(F): \sup_{n} \Ecal^{(n)}(f,f) < \infty \right\},\\
&\Ecal(f,f) = \sup_{n} \Ecal^{(n)}(f,f),\quad f \in D,
\end{split}
\end{equation*}
where $C(F)$ is the space of real-valued continuous functions on $F$.

\begin{thm}\label{fpDform}
The pair $(\Ecal,D)$ is a regular local irreducible Dirichlet form on $\Lcal^2(F,\mu)$. It has an associated resistance 
metric generating a topology that is equivalent to the existing topology on $F$.
\end{thm}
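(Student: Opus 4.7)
The plan is to follow Kigami's framework for resistance forms on generalized p.c.f. self-similar sets, exploiting the fact that $(\Ecal^{(n)})_{n \geq 0}$ is by construction a compatible (nested) sequence of finite-dimensional Dirichlet forms.

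First I would establish the resistance metric on $\bigcup_n F^n$. For each $n$, the finite Dirichlet form $\Ecal^{(n)} \in \Dbb_n$ has its own effective resistance $R^{(n)}$. The nesting property $\Tr(\Ecal^{(n)}|F^m) = \Ecal^{(m)}$ (which follows from $\Ecal^{(0)}$ being a fixed point of $\Lambda$) implies that $R^{(n)}(x,y) = R^{(m)}(x,y)$ whenever $x,y \in F^m$ and $n \geq m$. Hence a consistent $R(x,y)$ is defined on $F^* := \bigcup_n F^n$. Then I would control the resistance diameter of $n$-cells: for a word $w \in \Wbb_n$, elementary manipulation using the replication operation and positivity gives $\diam_R(\psi_w(F)) \leq C \rho^{-n} r_w \leq C (r_{\max}\rho^{-1})^n$, and since $(\Ecal^{(0)}, r)$ is a regular fixed point, $r_{\max}\rho^{-1} < 1$, so $n$-cell resistance diameters shrink geometrically to zero.

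Second, I would use this geometric contraction to transfer the resistance metric to $F$. Since $F^*$ is dense in $F$ (by \eqref{Vn}) and the resistance diameter of the $n$-cell containing any point $x \in F$ tends to zero, $R$ extends uniquely by continuity to a pseudometric on $F$, which is in fact a metric (non-degeneracy of $\Ecal^{(0)}$ prevents collapse). For the topological equivalence I would show both inclusions: resistance-metric balls around a point $x$ contain an $n$-cell for large $n$ (hence contain an original-metric ball by compactness and continuity of $\pi$), and conversely the original-metric closure of $n$-cells has uniformly small resistance diameter; since $F$ is compact in its original topology, any bijective continuous map from $(F,d)$ to $(F,R)$ is a homeomorphism.

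Third, I would construct $(\Ecal, D)$ as the limit form. By monotone convergence $\Ecal(f,f) = \lim_n \Ecal^{(n)}(f,f)$ on $D$. Each $f \in D$ is Lipschitz with respect to $R$ via the universal bound $|f(x) - f(y)|^2 \leq R(x,y) \Ecal(f,f)$ (proved level by level on $F^*$ and extended by density), so each such $f$ has a unique continuous extension to $F$. The form $(\Ecal, D)$ is then a resistance form in the sense of Kigami, whose Markov, local, closed, and symmetric properties follow from the corresponding properties of the $\Ecal^{(n)}$ and standard $\sup$-of-forms arguments. Irreducibility of $\Ecal$ follows from non-degeneracy of $\Ecal^{(0)}$ together with connectivity. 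To upgrade from resistance form to regular Dirichlet form on $\Lcal^2(F,\mu)$, one pairs the resistance form with the Bernoulli measure $\mu$, which has full support; density of $D \cap C(F)$ in $C(F)$ in the sup norm (hence regularity) comes from the ability to prescribe values on the dense set $F^*$ with bounded energy, and closedness in $\Lcal^2(F,\mu)$ is inherited from the resistance-form closedness plus the $\mu$-full-support property.

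The main obstacle I anticipate is the topological equivalence step: one must check carefully that the geometric contraction on resistance diameters of cells is inherited from the regularity condition $r_i \rho^{-1} < 1$ without circularity, and that the generalized p.c.f.~structure (where $P(\Scal)$ itself need not be finite, only $\pi(P(\Scal))$) does not obstruct the cell-addressing arguments used in the standard p.c.f.~setting. Essentially every other step is a transcription of Kigami's argument once the self-similar structure is sufficiently rigid, so the novelty lies entirely in verifying these arguments extend from p.c.f. to generalized p.c.f.~sets.
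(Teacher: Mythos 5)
Your proposal is correct and follows essentially the same route as the paper, which simply defers to the corresponding results in Kigami's \emph{Analysis on Fractals} (Theorems 3.3.4 and 3.4.6) plus the observation that irreducibility and dense definedness follow from irreducibility of each $\Ecal^{(n)}$ and density of $\bigcup_n F^n$; you have merely unpacked the steps of Kigami's argument (compatible sequence, cell-diameter contraction via $r_{\max}\rho^{-1}<1$, extension of $R$, pairing with $\mu$) that the paper leaves implicit. Your closing caution about the generalized p.c.f.~setting is well placed but resolves easily, since only the finiteness of $F^0=\pi(P(\Scal))$ (hence of each $F^n$) is used, not finiteness of $P(\Scal)$ itself.
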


\begin{proof}
The proof is identical to proofs of similar results in \cite{Kigami2001}. The resistance metric result is from Theorem~3.3.4. $(\Ecal,D)$ is a 
local regular Dirichlet form by Theorem~3.4.6. It is irreducible since each $\Ecal^{(n)}$ is irreducible and the union of the $F^n$ is dense 
in $F$. It is easy to verify that the form is densely defined given regularity: $D$ is dense in $C(F)$ in the uniform norm. 
Since $\mu(F) < \infty$, $D$ also is 
dense in $C(F)$ in the $\Lcal^2$ norm. And $C(F)$ is dense in $\Lcal^2(F,\mu)$ by the compactness of $F$.
\end{proof}

By \cite{Fukushima2010} there therefore exists a $\mu$-symmetric diffusion $X = (X_t)_{t \geq 0}$ on $F$ associated with $(\Ecal,D)$ 
and $\Lcal^2(F,\mu)$. Let $\Dcal_F[0,\infty)$ be the space of c\`{a}dl\`{a}g functions $f: [0,\infty) \to F$. Then the following also holds:
\begin{thm}\label{fpcvgce}
$X^n \to X$ weakly in $\Dcal_F[0,\infty)$. Precisely, if $(x_n)_n$ is a sequence in $F$ such that $x_n \in F^n$ for each $n$ and $x_n \to x \in F$, 
then
\begin{equation*}
(X^n, \Pbb^{x_n}) \to (X, \Pbb^x)
\end{equation*}
weakly in $\Dcal_F[0,\infty)$.
\end{thm}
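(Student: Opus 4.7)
The plan is to realise each approximating process $X^n$ as corresponding to the trace of the limiting Dirichlet form onto $F^n$, and then to invoke the resistance-form convergence framework of \cite{croydon2016a}. The key structural observation is that by the nested construction via the trace operation, $\Ecal^{(n)} = \Tr(\Ecal^{(m)}|F^n)$ for all $m \geq n$; passing to the limit using the monotone definition of $(\Ecal,D)$ together with the variational characterisation of the trace yields $\Ecal^{(n)} = \Tr(\Ecal|F^n)$. This forces the resistance metrics to agree on the approximating sets, $R_{\Ecal^{(n)}}(x,y) = R_\Ecal(x,y)$ for all $x,y \in F^n$, so each $F^n$ embeds isometrically into the compact resistance metric space $(F,R_\Ecal)$ via the identity inclusion.

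With these embeddings fixed, I would verify Gromov-Hausdorff-vague convergence of the pointed measured metric spaces $(F^n, R_\Ecal|_{F^n}, \mu_n, x_n)$ to $(F, R_\Ecal, \mu, x)$. Hausdorff density of $F^n$ in $F$ is \eqref{Vn} combined with the topological equivalence statement in Theorem~\ref{fpDform}; weak convergence of the measures is Lemma~\ref{measweak}; and convergence of starting points is the hypothesis $x_n \to x$. Applying the abstract convergence theorem for stochastic processes associated with resistance forms from \cite{croydon2016a} then yields weak convergence of $(X^n,\Pbb^{x_n})$ to $(X,\Pbb^x)$ in $\Dcal_F[0,\infty)$.

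The main obstacle is verifying that the hypotheses of the abstract convergence theorem are all met in our generalised p.c.f.s.s.\ setting. The crucial checks are: (i) that $(\Ecal,D)$ is a bona fide resistance form in Kigami's sense with resistance metric inducing the original topology on $F$, which is Theorem~\ref{fpDform}; (ii) that the discrete forms $\Ecal^{(n)}$ are themselves resistance forms on their finite state spaces, which is immediate since they are irreducible conservative Dirichlet forms on finite sets; and (iii) that compactness of $F$ rules out any explosion issues for the associated Markov processes. Given these, no direct process-level tightness argument is needed; the convergence drops out of the abstract framework, which is presumably the strategy the authors will adopt here in preparation for their main theorem.
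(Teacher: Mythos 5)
Your proposal matches the paper's proof essentially step for step: establish that the compatibility (trace) relation forces the resistance metrics of the $\Ecal^{(n)}$ to agree with that of $\Ecal$, so each $F^n$ is isometrically embedded in the compact space $F$ and converges to it in the Hausdorff sense by density; combine this with Lemma~\ref{measweak} and invoke \cite[Theorem 7.1]{croydon2016a} together with the compactness observation of \cite[Remark 1.3(b)]{croydon2016a}. Your write-up is if anything slightly more explicit about why the metrics agree and which hypotheses of the abstract theorem need checking, but the route is identical.
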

\begin{proof}
By the fact that the sequence $(\Ecal^{(n)})_n$ of Dirichlet forms are compatible, their induced resistance metrics must agree. Therefore if each $F^n$ and $F$ are interpreted as metric spaces equipped with their respective resistance metrics (using Theorem \ref{fpDform}), we see that each $F^n$ is isometrically embedded in $F$. In particular, by Theorem \ref{fpDform} the resistance metric on $F$ induces the existing topology on $F$. Since $F$ is compact and $\bigcup_{n \geq 0} F^n$ is dense in $F$, the increasing sequence of finite subsets $F^n$ must converge to $F$ in the Hausdorff topology on compact subsets of $F$ equipped with its resistance metric. Taking into account that we also have the weak convergence of Lemma \ref{measweak}, the result follows directly from \cite[Theorem 7.1]{croydon2016a}, since all of the spaces $F^n$ and $F$ are compact (see \cite[Remark 1.3(b)]{croydon2016a}).
\end{proof}

\section{Non-fixed-point diffusions}\label{nfpd}

The advantage of using a fixed point $\Ecal \in \Dbb_0$ of the renormalization map $\Lambda$ 
is that it is easy to produce a nested sequence of Dirichlet forms. We will see in this section that it is in fact possible to consider 
forms $\Ecal \in \Dbb_0$ that are \textit{not} fixed points, but where $\Lambda(\Ecal)$ is sufficiently easy to understand. 
Now let $\Scal = (F,(\psi_i)_{1 \leq i \leq N})$ be a connected generalized p.c.f.s.s.~set and $r$ a resistance vector. Let $\mu = \mu_\theta$ 
be a Bernoulli measure on $F$. For each $n \geq 0$ let $\mu_n$ be the measure on $F^n$ given by Definition \ref{approxmeasure}. 
Let $r_{\min}, r_{\max}, \theta_{\min}, \theta_{\max}$ be defined as before.

\begin{ass}\label{1paramass}
There exists a one-parameter family $\Dbb \subseteq \Dbb_0$ of forms such that if $\Ecal \in \Dbb$, then $\rho_\Ecal\Lambda(\Ecal) \in \Dbb$ 
for some $\rho_\Ecal > 0$. It is such that there exists a parametrisation $\Dbb = (\Ecal^{(0)}_v)_{v > 0}$ where $\Ecal^{(0)}_v$ only has 
edges of conductance $1$ or $v$, and there exists at least one edge of conductance $v$. Call $\Dbb$ a \textit{one-parameter invariant family}
with respect to $\Lambda$. Additionally, we assume that the family is \textit{asymptotically regular}, that is, there exists $u \in [0,\infty)$ 
such that
\begin{equation*}
r_{\max}\sup_{v  > u} \rho_{\Ecal^{(0)}_v}^{-1}  <1.
\end{equation*}
\end{ass}

\begin{rem}\label{rem:vicsek}
We specified edges to have conductance 1 or $v$; the constant 1 here is arbitrary, due to the positive homogeneity of $\Lambda$.
Other examples can be found in \cite{hambly1998}.
\end{rem}


In addition to the above assumption, we need another technical assumption on the structure of $F$, in particular its diameter in the 
resistance metric. For $v > 0$ let $R_v$ be the resistance metric on $F^0$ associated with $\Ecal^{(0)}_v$. Let $\diam(F^0,R_v)$ 
be the diameter of $F^0$ with respect to $R_v$. We require that $F^0$ does not ``shrink to zero'' in $R_v$ as $v \to \infty$. This 
can be verified geometrically. Note that in the following result we interpret $(F^0,\Ecal^{(0)}_v)$ as a graph with vertex set $F^0$ and edge set containing $(x,y)$, $x \neq y$ if and only if the $\Ecal^{(0)}_v$-conductance between $x$ and $y$ is strictly positive.

\begin{lem}\label{posdiam}
Suppose $\Dbb = (\Ecal^{(0)}_v)_{v > 0}$ is an asymptotically regular one-parameter invariant family with respect to $\Lambda$. 
The following are equivalent:
\begin{enumerate}
\item $\inf_{v > 0} \diam(F^0,R_v) > 0$.
\item The subgraph of $(F^0,\Ecal^{(0)}_v)$ generated by removing all of the edges of conductance $1$ is disconnected.
\end{enumerate}
\end{lem}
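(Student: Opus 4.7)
The plan is to combine two elementary electrical-network bounds: the single-path (series) upper bound on effective resistance for the direction (1) $\Rightarrow$ (2), and the Nash--Williams single-cut lower bound for (2) $\Rightarrow$ (1). The key preliminary observation is that Assumption \ref{1paramass} fixes a $v$-independent partition of the edges of the complete graph on $F^0$ into a set $E_1$ of conductance-$1$ edges and a non-empty set $E_v$ of conductance-$v$ edges; condition (2) then reads precisely as: the graph $(F^0, E_v)$ on the full vertex set $F^0$ is disconnected.

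For (2) $\Rightarrow$ (1), I would pick a connected component $A \subsetneq F^0$ of $(F^0, E_v)$. Because $A$ is a union of $v$-components, every edge of $\Ecal^{(0)}_v$ crossing the cut $(A, F^0 \setminus A)$ must lie in $E_1$. Let $k \in \Nbb$ denote the number of such crossing edges; $k \geq 1$ since the underlying graph $(F^0, \Ecal^{(0)}_v)$ is connected (which is implicit in the non-degenerate/irreducible setting of Section~\ref{nfpd}). Nash--Williams applied to this single cut, of total conductance $k$ independent of $v$, then gives $R_v(x, y) \geq 1/k$ for any $x \in A$ and $y \in F^0 \setminus A$, hence $\inf_{v > 0} \diam(F^0, R_v) \geq 1/k > 0$.

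For (1) $\Rightarrow$ (2) I would argue the contrapositive: assuming $(F^0, E_v)$ is connected, any two points $x, y \in F^0$ are joined by a simple path of at most $\#F^0 - 1$ edges in $E_v$, each of which provides a single-edge two-terminal network of resistance $1/v$, so the standard series upper bound yields $R_v(x, y) \leq (\#F^0 - 1)/v$. Letting $v \to \infty$ gives $\inf_{v > 0} \diam(F^0, R_v) = 0$, contradicting (1).

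No serious obstacle arises in either direction; the argument reduces to textbook electrical-network inequalities once the $1$/$v$ partition of the edges is recognised as intrinsic to the one-parameter family (note that the asymptotic regularity hypothesis is never actually invoked). The only point worth careful verification is that the Nash--Williams cut used in the lower bound contains no $v$-edges, which is exactly the content of $A$ being a union of components of $(F^0, E_v)$, thereby yielding a bound $1/k$ that is uniform in $v > 0$.
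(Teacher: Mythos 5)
Your argument is correct and is essentially the paper's own proof in different clothing: the paper's $(2)\Rightarrow(1)$ direction applies the inequality $|f(x)-f(y)|^2\le R_v(x,y)\,\Ecal^{(0)}_v(f,f)$ to the indicator of a $v$-component, which is exactly the single-cut Nash--Williams bound you invoke (the cut being the all-conductance-$1$ edge boundary of that component), and the $(1)\Rightarrow(2)$ direction is the same series upper bound $R_v(x,y)\le Cv^{-1}$ along a path of conductance-$v$ edges. Your side remarks also match the paper: asymptotic regularity is not used, and the only point needing care is that the cut contains no $v$-edges.
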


\begin{proof}
Suppose (2) holds. Evidently $(F^0,\Ecal^{(0)}_v)$ must have at least one edge of conductance $1$. On the generated subgraph there 
exists by assumption $x,y \in F^0$ in distinct connected components. Consider a function $f$ that takes the value $1$ on the connected 
component containing $x$ and takes the value $0$ elsewhere. Then for every $v > 0$,
\begin{equation*}
R_v(x,y) \geq \frac{(f(x) - f(y))^2}{\Ecal^{(0)}_v(f,f)} \geq \frac{r_{\min}}{\# \text{edges of conductance 1 in $(F^0,\Ecal^{(0)}_v)$} }
\end{equation*}
which does not depend on $v$ and is positive. So $\inf_{v > 0} \diam(F^0,R_v) > 0$.

Now suppose (2) does not hold. Then for any $x,y \in F^0$ there is a path between $x$ and $y$ in $F$ consisting only of edges of 
conductance $v$. Thus
\begin{equation*}
R_v(x,y) \leq v^{-1} r_{\max} \cdot \# \text{edges of conductance $v$ in $(F^0,\Ecal^{(0)}_v)$},
\end{equation*}
which only depends on $v$ through the term $v^{-1}$, as before. Thus $R_v(x,y) \to 0$ as $v \to \infty$. There are only a finite number of 
pairs $x,y \in F^0$, so $\diam(F^0,R_v) \to 0$ as $v \to \infty$ and so $\inf_{v > 0} \diam(F,R_v) = 0$.
\end{proof}

The first property in the above lemma is the important one; the second is just a simple geometric way of verifying it.

\begin{defn}
 We will say that a one-parameter invariant family $\Dbb$ with the first property in the above lemma is \textit{non-vanishing}.
\end{defn}

\begin{defn}
Define functions $\rho: (0,\infty) \to (0,\infty)$ and $\alpha: (0,\infty) \to (0,\infty)$ such that
\begin{equation*}
\Lambda(\Ecal^{(0)}_v) = \rho(v)^{-1}\Ecal^{(0)}_{\alpha(v)}
\end{equation*}
for any $v > 0$.
\end{defn}

The functions $\rho$ and $\alpha$ can be computed by the standard method of taking traces of conductance matrices, see 
\cite[Chapter 4]{barlow1998}. It follows that $\rho$ and $\alpha$ are rational functions of finite-degree polynomials. 
The non-vanishing property allows us to get some additional control on these functions.

\begin{prop}\label{CGexist}
Suppose $\Dbb = (\Ecal^{(0)}_v)_{v > 0}$ is an asymptotically regular non-vanishing one-parameter invariant family with respect to $\Lambda$. 
Then there exists a finite positive limit
\begin{equation*}
\rho_G := \lim_{v \to \infty} \rho(v)
\end{equation*}
such that $\rho_G > r_{\max}$.
\end{prop}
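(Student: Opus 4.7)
The plan rests on three observations. First, the conductance matrix of $\Ecal^{(0)}_v$ has entries linear in $v$, and the trace operation is Schur complementation, which is rational in the matrix entries. Since Assumption \ref{1paramass} constrains $\Lambda(\Ecal^{(0)}_v)$ to a one-parameter family whose forms have only two conductance values, the identity $\Lambda(\Ecal^{(0)}_v) = \rho(v)^{-1}\Ecal^{(0)}_{\alpha(v)}$ forces $\rho(v)$ and $\alpha(v)$ to be rational functions of $v$. Consequently $\rho_G := \lim_{v\to\infty}\rho(v)$ exists automatically in $[0,\infty]$, and the task reduces to ruling out the endpoints and pinning the limit in $(r_{\max},\infty)$. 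Second, the asymptotic regularity in Assumption \ref{1paramass} provides the strict lower bound: with $c := r_{\max}\sup_{v>u}\rho(v)^{-1} < 1$, we have $\rho(v) \geq r_{\max}/c > r_{\max}$ for all $v > u$, so $\rho_G \geq r_{\max}/c$ once finiteness is established.

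The main step is therefore the upper bound, which is where the non-vanishing property enters. The equality $\Lambda(\Ecal^{(0)}_v) = \rho(v)^{-1}\Ecal^{(0)}_{\alpha(v)}$ forces these two forms to induce the same resistance metric on $F^0$; since rescaling a Dirichlet form by $\rho(v)^{-1}$ rescales the resistance metric by $\rho(v)$, the trace metric on $F^0$ inherited from $R(\Ecal^{(0)}_v)$ on $F^1$ equals $\rho(v)R_{\alpha(v)}$. Choose $x,y\in F^0$ realising $\diam(F^0, R_{\alpha(v)})$ and bound the effective resistance from $x$ to $y$ in $F^1$ from above by the series resistance along any connecting path. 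Crucially, the combinatorial graph structure of $F^1$ (which vertex pairs are joined by an edge at all) is independent of $v$, so a fixed path of bounded length $K$ (depending only on $F$) exists, and for $v \geq 1$ every edge resistance in $R(\Ecal^{(0)}_v)$ is at most $r_{\max}$. This yields
\[\rho(v)\,\diam(F^0, R_{\alpha(v)}) \leq K r_{\max}.\]
Non-vanishing gives $\diam(F^0, R_{\alpha(v)}) \geq \inf_{w>0}\diam(F^0,R_w) > 0$ uniformly in $v$, so $\rho(v)$ is uniformly bounded above and $\rho_G < \infty$, completing the proof.

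The main obstacle is the upper-bound step: one must simultaneously bound the numerator in the path-based argument above (controlled by the fixed combinatorial structure of $F^1$ and the assumption $v \geq 1$) and bound the denominator away from zero (which is exactly the non-vanishing property). A subtle point is that $\alpha(v)$ could in principle behave erratically as $v \to \infty$, but because non-vanishing holds uniformly across the whole parameter space, we avoid having to track $\alpha(v)$ at all — we simply evaluate the uniform bound at the value $\alpha(v)$. Everything else is either formal (the lower bound) or automatic (the existence of the limit from rationality).
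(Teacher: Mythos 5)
Your proposal is correct and follows essentially the same route as the paper: rationality of $\rho$ gives existence of the limit in $[0,\infty]$, asymptotic regularity gives the strict lower bound $\rho_G > r_{\max}$, and the trace identity $\Tr(R(\Ecal^{(0)}_v)\,|\,F^0) = \rho(v)^{-1}\Ecal^{(0)}_{\alpha(v)}$ combined with the non-vanishing property rules out $\rho_G = \infty$. The only cosmetic difference is that the paper runs the last step as a contradiction (if $\rho(v)\to\infty$ then every resistance of $\rho(v)R(\Ecal^{(0)}_v)$ on $F^1$ vanishes, forcing $\diam(F^0,R_{\alpha(v)})\to 0$), whereas you make the same series-resistance path bound quantitative; the two are interchangeable.
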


\begin{proof}
$\rho$ is a positive rational function so as $v \to \infty$ we have $\rho(v) \to \rho_G$ for some $\rho_G \in [0,\infty]$. Recall the replication 
and trace maps from Definition \ref{operations}. Consider $\rho(v) R(\Ecal^{(0)}_v) \in \Dbb_1$. By definition, the trace of this form on 
$F^0$ is $\Ecal^{(0)}_{\alpha(v)} \in \Dbb_0$. If $\rho(v) \to \infty$ as $v \to \infty$, then the diameter of $F^1$ with respect to the resistance metric induced by $\rho(v) R(\Ecal^{(0)}_v)$ must tend to $0$ as $v \to \infty$. Then we would have $\diam(F^0,R_{\alpha(v)}) \to 0$ 
which contradicts the non-vanishing property. Finally $\rho_G > r_{\max}$ by the asymptotic regularity property in Assumption \ref{1paramass}.
\end{proof}

\begin{prop}\label{alphalim}
Suppose $\Dbb = (\Ecal^{(0)}_v)_{v > 0}$ is an asymptotically regular non-vanishing one-parameter invariant family with respect 
to $\Lambda$. The following are equivalent:
\begin{enumerate}
\item $\lim_{v \to \infty} \alpha(v) = \infty$.
\item For all $x,y \in F^0$, there exists a path in $(F^0,\Ecal^{(0)}_v)$ from $x$ to $y$ consisting only of edges of conductance $v$ 
if and only if there is a path in $(F^1,R(\Ecal^{(0)}_v))$ from $x$ to $y$ consisting only of edges of conductance of the form $r_i^{-1}v$.
\item There exist $x,y \in F^0$ such that the $\Ecal^{(0)}_v$-conductance between $x$ and $y$ is $v$ and there is a path in 
$(F^1,R(\Ecal^{(0)}_v))$ from $x$ to $y$ consisting only of edges of conductance of the form $r_i^{-1}v$.
\end{enumerate}
Moreover, if the above statements hold then there exists $\beta > 0$ such that
\begin{equation*}
\lim_{v \to \infty} \frac{\alpha(v)}{v} = \frac{1}{\beta}.
\end{equation*}
\end{prop}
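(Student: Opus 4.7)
The plan is to cycle $(2)\Rightarrow(3)\Rightarrow(1)\Rightarrow(2)$, extracting the moreover clause from the quantitative form of the arguments. Everything hinges on the identity
\[C_{R(\Ecal^{(0)}_v)}(x,y)\;=\;\rho(v)^{-1}\,C_{\Ecal^{(0)}_{\alpha(v)}}(x,y),\qquad x,y\in F^0,\]
where $C_\Ecal$ denotes effective conductance. This is immediate from the trace-preserves-resistance property and the homogeneity $C_{c\Ecal}=cC_\Ecal$ applied to $\Lambda(\Ecal^{(0)}_v)=\rho(v)^{-1}\Ecal^{(0)}_{\alpha(v)}$; by Proposition~\ref{CGexist}, $\rho(v)^{\pm 1}$ stays bounded as $v\to\infty$.

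The implication $(2)\Rightarrow(3)$ is immediate since Assumption~\ref{1paramass} supplies a $v$-edge in $\Ecal^{(0)}_v$ whose endpoints trivially realise the hypothesis of (A). For $(3)\Rightarrow(1)$, the path in $F^1$ furnished by (3) has at most $\#F^1-1$ edges each of conductance at least $r_{\max}^{-1}v$, so a series bound gives $C_{R(\Ecal^{(0)}_v)}(x,y)\gtrsim v$. By the central identity $C_{\Ecal^{(0)}_{\alpha(v)}}(x,y)\to\infty$, and since effective conductance in $\Ecal^{(0)}_{\alpha(v)}$ is bounded above by a multiple of $\max(1,\alpha(v))$ (e.g.\ by the degree-sum at $x$), this forces $\alpha(v)\to\infty$.

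The substantive direction is $(1)\Rightarrow(2)$, where the main obstacle is the following two-scale dichotomy, which I would establish as a lemma: in a finite network whose edges are either of bounded conductance or of conductance scaling linearly in a parameter $\tau$, the effective conductance between two vertices as $\tau\to\infty$ is either bounded (when the vertices lie in distinct components of the subgraph of high-conductance edges) or grows as $c\tau+O(1)$ with $c>0$ (otherwise). Upper bounds come from shorting all high-conductance edges (Rayleigh monotonicity applied to the resulting network, which has only bounded conductances), lower bounds from a single high-conductance path taken in series. Applying the dichotomy once to $(F^0,\Ecal^{(0)}_{\alpha(v)})$ with $\tau=\alpha(v)\to\infty$ gives $C_{\Ecal^{(0)}_{\alpha(v)}}(x,y)\to\infty \Leftrightarrow$ (A), and once to $(F^1,R(\Ecal^{(0)}_v))$ with $\tau=v$ gives $C_{R(\Ecal^{(0)}_v)}(x,y)\to\infty \Leftrightarrow$ (B). The central identity, together with boundedness of $\rho(v)^{\pm1}$, identifies these two divergences, yielding (A)$\Leftrightarrow$(B).

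The moreover clause follows by tracking constants in the dichotomy: for the pair $(x,y)$ of (3), we obtain $C_{R(\Ecal^{(0)}_v)}(x,y)=c_1 v+O(1)$ and $C_{\Ecal^{(0)}_{\alpha(v)}}(x,y)=c_0\alpha(v)+O(1)$ for positive constants $c_0,c_1$ determined by the respective high-conductance subgraphs on $F^0$ and $F^1$. Combining with $\rho(v)\to\rho_G$ gives $\alpha(v)/v\to \rho_G c_1/c_0$, so $\beta:=c_0/(\rho_G c_1)>0$ works.
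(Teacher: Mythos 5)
Your proposal is correct and follows essentially the same route as the paper: your central identity is exactly the paper's observation that the trace of $\rho(v)R(\Ecal^{(0)}_v)$ onto $F^0$ is $\Ecal^{(0)}_{\alpha(v)}$, and your two-scale dichotomy (shorting the high-conductance edges for upper bounds, a single high-conductance path in series for lower bounds) is precisely the mechanism the paper uses implicitly when it asserts that the two implications in the proof of (2) are in fact equivalences. The only mild divergence is the ``moreover'' clause, where the paper first deduces $\alpha(v)/v=O(1)$ and then invokes the rationality of $\alpha$ itself, whereas you read the limit off linear expansions $c\tau+O(1)$ of the two effective conductances --- both arguments ultimately rest on rationality of the relevant functions of the parameter, and yours has the small bonus of an explicit formula for $\beta$.
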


\begin{proof}
Obviously (2) implies (3) under Assumption \ref{1paramass}.

Suppose (3) holds. Consider $\rho(v) R(\Ecal^{(0)}_v) \in \Dbb_1$. By definition, the trace of this form on $F^0$ is $\Ecal^{(0)}_{\alpha(v)} 
\in \Dbb_0$. Since there is a path in $(F^1,\rho(v)R(\Ecal^{(0)}_v))$ from $x$ to $y$ consisting only of edges of conductance of the form 
$r_i^{-1}v\rho(v)$, and $\lim_{v \to \infty}v\rho(v) = \infty$ by the previous proposition, it follows that $R_{\alpha(v)}(x,y) \to 0$ as 
$v \to \infty$. Hence we must have $\lim_{v \to \infty}\alpha(v) = \infty$ and so (1) holds.

Now suppose (1) holds. Let $x,y \in F^0$ such that there exists a path in $(F^0,\Ecal^{(0)}_v)$ from $x$ to $y$ consisting only of edges 
of conductance $v$. By the assumption of (1) this implies that
\begin{equation*}
\lim_{v \to \infty}R_{\alpha(v)}(x,y) = \lim_{v \to \infty}R_{v}(x,y) = 0.
\end{equation*}
Considering $\rho(v) R(\Ecal^{(0)}_v) \in \Dbb_1$ again, 
we see that this Dirichlet form only has edges of conductance $r_i^{-1}\rho(v)$ or $r_i^{-1}v\rho(v)$, for $1 \leq i \leq N$. We know from 
the previous proposition that as $v \to \infty$, $\rho(v) \to \rho_G$ and so $v\rho(v) \to \infty$. Then $R_{\alpha(v)}(x,y) \to 0$ as 
$v \to \infty$ implies that there is a path in $(F^1,\rho(v)R(\Ecal^{(0)}_v))$ from $x$ to $y$ consisting only of edges of conductance 
of the form $r_i^{-1}v\rho(v)$. Both of these implications are in fact equivalences, and hence (2) is proven.

Finally, suppose (1), (2), (3) above hold. For $v > 0$ consider the Dirichlet form $v^{-1}\rho(v)R(\Ecal^{(0)}_v) \in \Dbb_1$, which only 
has edges of conductance $r_i^{-1}v^{-1}\rho(v)$ or $r_i^{-1}\rho(v)$, for $1 \leq i \leq N$. Its trace on $F^0$ is 
$v^{-1}\Ecal^{(0)}_{\alpha(v)} \in \Dbb_0$, which only has edges of conductance $v^{-1}$ or $v^{-1}\alpha(v)$. Take $x,y \in F^0$ 
from (3). We will consider the distance $d_v$ between $x$ and $y$ in the resistance metrics associated with these two Dirichlet forms, 
which are equal since the trace operator preserves the resistance metric. First we consider $d_v$ with respect to 
$v^{-1}\rho(v)R(\Ecal^{(0)}_v) \in \Dbb_1$. We know that $\rho(v) \to \rho_G$ as $v \to \infty$, so $v^{-1}\rho(v) \to 0$. By (3) there 
is a path in $(F^1,v^{-1}\rho(v)R(\Ecal^{(0)}_v))$ from $x$ to $y$ consisting only of edges of conductance of the form $r_i^{-1}\rho(v)$, so
\begin{equation*}
d_v = O(\rho(v)^{-1}) = O(\rho_G^{-1}) = O(1) \text{ as }v \to \infty.
\end{equation*}
Now we consider $d_v$ with respect to $v^{-1}\Ecal^{(0)}_{\alpha(v)} \in \Dbb_0$. Obviously $v^{-1} \to 0$ as $v \to \infty$, and by 
our choice of $x,y \in F^0$ the $v^{-1}\Ecal^{(0)}_{\alpha(v)}$-conductance between $x$ and $y$ is $v^{-1}\alpha(v)$. So similarly 
we see that
\begin{equation*}
d_v = O((v^{-1}\alpha(v))^{-1}) = O(v \alpha(v)^{-1}) \text{ as }v \to \infty.
\end{equation*}
Combining these it follows that
\begin{equation*}
\frac{\alpha(v)}{v} = O(1) \text{ as }v \to \infty,
\end{equation*}
and so since $\alpha$ is a positive rational function, it must be that
\begin{equation*}
\alpha(v) = \frac{p(v)}{\beta q(v)}
\end{equation*}
where $p$ and $q$ are monic polynomials with $\deg p = \deg q +1$, and $\beta > 0$. So
\begin{equation*}
\lim_{v \to \infty} \frac{\alpha(v)}{v} = \frac{1}{\beta}.
\end{equation*}
\end{proof}

\begin{ass}\label{alphalimass}
The statements of Proposition~\ref{alphalim} hold with $\beta > 1$. 
\end{ass}

An asymptotically regular non-vanishing one-parameter invariant family satisfying this assumption will be known as a 
one-parameter \textit{iterable} family.

For such a family, it follows from Assumptions~\ref{1paramass} and~\ref{alphalimass} that if we define
\begin{equation*}
v_{\min} = \max \left( \left\{ v: \frac{d\alpha}{dv}(v) = 0 \right\} \cup \{ v: \alpha(v) = v \} \cup \{ v: \rho(v) = r_{\max} \} \cup \{ 0 \} \right),
\end{equation*}
then $v_{\min} \in [0,\infty)$ and $\alpha$ is strictly increasing on $(v_{\min},\infty)$. The inverse $\alpha^{-1}$ of $\alpha$ can then 
be uniquely defined on $(v_{\min},\infty)$. It is continuous, strictly increasing, satisfies $\alpha^{-1}(v) > v$ for all $v \in (v_{\min},\infty)$, 
and
\begin{equation*}
\lim_{v \to \infty} \frac{\alpha^{-1}(v)}{v} = \beta > 1.
\end{equation*}
The Assumption~\ref{alphalimass} ensures that all the iterates $\alpha^{-n} := \alpha^{-1} \circ \alpha^{-(n-1)}$ can be defined 
inductively on the same domain, that is, $\alpha^{-n}: (v_{\min},\infty) \to (v_{\min},\infty)$ for all $n \in \Nbb$. Hence 
the family is ``iterable''.

\begin{rem}
In many cases $v_{\min}$ is the largest (finite) fixed point of $\alpha$ and so $\Ecal^{(0)}_{v_{\min}}$ is a non-degenerate fixed point 
of $\Lambda$. For example, in Figure \ref{fig:gasblob} we may take $v_{\min}= 1$.
\end{rem}

Henceforth we will be operating under the assumption of iterability of the one-parameter family $\Dbb$, as only under this assumption 
do the definitions of $v_{\min}$ and $\alpha^{-n}$ make sense.

\begin{defn}
For $n \geq 0$, define $\rho_n:(v_{\min},\infty) \to (v_{\min},\infty)$ by
\begin{equation*}
\rho_n(v) = \prod_{i=1}^n \rho(\alpha^{-i}(v)).
\end{equation*}
\end{defn}

Now for each $n \geq 1$ define
\begin{equation}\label{Ecaln}
\Ecal^{(n)}_v = \rho_n(v)R^n(\Ecal^{(0)}_{\alpha^{-n}(v)}).
\end{equation}
It is simple to show, as in the previous section, that $(\Ecal^{(n)}_v)_n$ is a nested sequence of Dirichlet forms on each of the $F^n$ 
respectively. As before, we define:
\begin{defn}
Let
\begin{equation*}
\begin{split}
&D_v = \left\{ f \in C(F): \sup_{n} \Ecal^{(n)}_v(f,f) < \infty \right\},\\
&\Ecal_v(f,f) = \sup_{n} \Ecal^{(n)}_v(f,f),\quad f \in D.
\end{split}
\end{equation*}
\end{defn}

\begin{thm}\label{nfpDform}
For $v \in (v_{\min}, \infty)$, the pair $(\Ecal_v,D_v)$ is a regular local irreducible Dirichlet form on $\Lcal^2(F,\mu)$. It has an associated 
resistance metric that is equivalent to the existing topology on $F$.
\end{thm}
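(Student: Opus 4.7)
The plan is to parallel the proof of Theorem~\ref{fpDform}, appealing to Kigami's resistance-form machinery, while accounting for the fact that the forms $(\Ecal^{(n)}_v)_{n\geq 0}$ are no longer iterates of a single fixed point but are unfolded from the trajectory $(\alpha^{-n}(v))_n$. Two additional pieces of work are required: verifying compatibility of the sequence under the trace operation, and establishing uniform control on the resistance scaling so that the abstract framework still applies.

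First I would verify compatibility. From the defining relation $\Lambda(\Ecal^{(0)}_v) = \rho(v)^{-1}\Ecal^{(0)}_{\alpha(v)}$ applied at the argument $\alpha^{-(n+1)}(v)$, together with the telescoping identity $\rho_{n+1}(v) = \rho_n(v)\rho(\alpha^{-(n+1)}(v))$, one obtains
\begin{equation*}
\Tr\bigl(\Ecal^{(n+1)}_v \bigm| F^n\bigr) = \Ecal^{(n)}_v, \qquad n \geq 0.
\end{equation*}
Consequently $n \mapsto \Ecal^{(n)}_v(f,f)$ is non-decreasing for every real-valued $f$ defined on $\bigcup_n F^n$, so $\Ecal_v$ is a well-defined monotone supremum.

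The technical core is then a uniform bound on $\sup_n \diam(F^n, R^{(n)}_v)$, where $R^{(n)}_v$ denotes the resistance metric induced by $\Ecal^{(n)}_v$. Expanding
\begin{equation*}
\Ecal^{(n)}_v(f,f) = \rho_n(v)\sum_{w \in \Wbb_n} r_w^{-1}\Ecal^{(0)}_{\alpha^{-n}(v)}(f\circ\psi_w, f\circ\psi_w),
\end{equation*}
the resistance between any two boundary points of adjacent $n$-cells is bounded by $r_w\rho_n(v)^{-1}$ times the $\Ecal^{(0)}_{\alpha^{-n}(v)}$-resistance between the corresponding pair in $F^0$. The latter is uniformly bounded, because $\Ecal^{(0)}_{\alpha^{-n}(v)}$ has conductances in $\{1,\alpha^{-n}(v)\}$ with $\alpha^{-n}(v) \geq v$, and because $F^0$ is finite and connected. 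By Proposition~\ref{CGexist} we have $\rho_G > r_{\max}$, and by Assumption~\ref{1paramass} there is $\delta \in (0,1)$ with $r_{\max}\rho(v')^{-1} \leq \delta$ for all $v' > u$. Since $\alpha^{-i}(v) \to \infty$ by Proposition~\ref{alphalim}, only finitely many of the factors $\rho(\alpha^{-i}(v))$ can fail the bound $\rho(\alpha^{-i}(v)) \geq r_{\max}/\delta$; each such factor is nevertheless strictly greater than $r_{\max}$, since $\alpha^{-i}(v) > v_{\min}$ and $v_{\min}$ absorbs the zero set of $\rho - r_{\max}$. Summing $r_w\rho_n(v)^{-1}$ along a fixed connecting path through $F^n$ then yields a geometrically convergent series, producing the required uniform diameter bound.

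With this uniform control in hand, the results of Kigami (in particular \cite[Theorem~3.3.4, Theorem~3.4.6]{Kigami2001}) yield that $(\Ecal_v,D_v)$ is a local regular Dirichlet form, that the resistance metric $R_v$ exists and generates a topology equivalent to the original topology on $F$, and that $D_v$ is dense in $C(F)$ (hence in $\Lcal^2(F,\mu)$ since $\mu(F) < \infty$). Irreducibility descends from the irreducibility of each $\Ecal^{(n)}_v$ combined with the density of $\bigcup_n F^n$ in $F$, exactly as in Theorem~\ref{fpDform}. The main obstacle is the uniform diameter estimate: unlike the fixed-point case, where $\rho$ is constant and $r_{\max}\rho^{-1} < 1$ yields geometric decay immediately, here the factors $\rho(\alpha^{-i}(v))$ vary with $i$ and can in principle approach $r_{\max}$ when $\alpha^{-i}(v)$ is close to $v_{\min}$; it is the joint use of the definition of $v_{\min}$ and of asymptotic regularity that allows one to extract the decay cleanly.
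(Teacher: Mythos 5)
Your proposal is correct and follows essentially the same route as the paper: check that $(\Ecal^{(n)}_v)_n$ is a compatible (nested) sequence and then invoke Kigami's resistance-form machinery for the Dirichlet form property, regularity, locality and the equivalence of the resistance-metric topology, with irreducibility and dense definition handled exactly as in Theorem~\ref{fpDform}. The uniform geometric decay of $r_{\max}^n\rho_n(v)^{-1}$ that you establish inline (via asymptotic regularity and the definition of $v_{\min}$) is the same estimate the paper isolates as Lemma~\ref{Cbounds} and Lemma~\ref{diambound} in the following section.
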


\begin{proof}
Again the resistance metric result comes directly from \cite[Theorem 3.3.4]{Kigami2001}. By \cite[Theorem 4.1]{Kigami1995} 
$(\Ecal_v,D_v)$ is a Dirichlet form on $\Lcal^2(F,\mu)$. For regularity and locality we can follow the respective arguments in 
\cite[Theorem 3.4.6]{Kigami2001}. Finally, the proof that $(\Ecal_v,D_v)$ is irreducible and densely defined is identical to the 
respective proofs in Theorem \ref{fpDform}.
\end{proof}

Thus we have constructed a one-parameter family of diffusions 
on $F$ which are not necessarily associated with a fixed point of the renormalization map, as \cite[Theorem 7.2.1]{Fukushima2010} 
gives us 
\begin{cor}
For each $v \in (v_{\min},\infty)$ there exists a $\mu$-symmetric diffusion $X^v = (X^v_t)_{t \geq 0}$ on $F$ with Dirichlet form 
$(\Ecal_v,D_v)$ on $\Lcal^2(F,\mu)$.
\end{cor}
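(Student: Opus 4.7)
The statement is essentially a direct application of the general Fukushima correspondence, so the plan is short and consists mainly of checking that the hypotheses of \cite[Theorem 7.2.1]{Fukushima2010} are satisfied by the form $(\Ecal_v, D_v)$ already constructed.

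First I would recall what that theorem requires: given a regular Dirichlet form on $\Lcal^2(E,m)$ with $E$ a locally compact separable metric space and $m$ a positive Radon measure of full support, there exists a Hunt process on $E$ whose Dirichlet form agrees with the given one and which is $m$-symmetric. If the form is additionally strongly local, the Hunt process has continuous sample paths, i.e.\ it is a diffusion. All of these hypotheses I would verify by invoking Theorem \ref{nfpDform}: $F$ is compact (hence locally compact) metric by Definition of a self-similar structure; $\mu$ is a Borel probability measure on $F$ which, being a Bernoulli measure pushed forward under $\pi$, charges every non-empty open set (equivalently has full support) because each cylinder set in $\Wbb$ has positive $\tilde{\mu}_\theta$-mass and the cylinder images form a neighbourhood basis of $F$; and $(\Ecal_v, D_v)$ is regular, local and irreducible by Theorem \ref{nfpDform}.

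Given all these ingredients, the conclusion follows immediately by applying \cite[Theorem 7.2.1]{Fukushima2010}, obtaining a $\mu$-symmetric Hunt process $X^v$ on $F$; the locality then upgrades this to a diffusion. The $\mu$-symmetry is automatic from the symmetry of $\Ecal_v$ (which is symmetric because each $\Ecal^{(n)}_v$ in \eqref{Ecaln} is a symmetric quadratic form built from a symmetric base form via the symmetric replication and supremum operations).

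There is no real obstacle here; the entire non-trivial content sits in Theorem \ref{nfpDform} and in the construction culminating in equation \eqref{Ecaln}. The only point that deserves a sentence of care is the full-support property of $\mu$ on $F$, which, as above, is immediate from the fact that the product measure $\tilde{\mu}_\theta$ assigns positive mass to every cylinder set together with continuity and surjectivity of $\pi$ from Lemma \ref{pimap}.
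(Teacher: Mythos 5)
Your proposal is correct and follows exactly the route the paper takes: the corollary is presented there as an immediate consequence of Theorem~\ref{nfpDform} together with \cite[Theorem 7.2.1]{Fukushima2010}, with locality upgrading the Hunt process to a diffusion. Your additional remarks on the full support of $\mu$ and the symmetry of the form are sensible housekeeping that the paper leaves implicit, but they do not constitute a different argument.
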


Now we fix a $v \in (v_{\min},\infty)$. For each $n \geq 0$, let $X^{v,n} = (X^{v,n}_t)_{t \geq 0}$ be the continuous time random 
walk associated with $\Ecal^{(n)}_v$ on $\Lcal^2(F^n,\mu_n)$.

\begin{thm}\label{nfpcvgce}
$X^{v,n} \to X^v$ weakly in $\Dcal_F[0,\infty)$. Precisely, if $(x_n)_n$ is a sequence in $F$ such that $x_n \in F^n$ for each $n$ and 
$x_n \to x \in F$, then
\begin{equation*}
(X^{v,n}, \Pbb^{x_n}) \to (X^v, \Pbb^x)
\end{equation*}
weakly in $\Dcal_F[0,\infty)$.
\end{thm}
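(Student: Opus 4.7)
The plan is to mirror the proof of Theorem \ref{fpcvgce}, reducing everything to an application of \cite[Theorem 7.1]{croydon2016a}. The hypotheses needed there are: (i) a common compact metric space into which each $F^n$ embeds isometrically, (ii) Hausdorff convergence $F^n \to F$ in that space, (iii) weak convergence of the measures $\mu_n \to \mu$, and (iv) compatibility of the associated Dirichlet forms. Points (ii) and (iii) will come directly from equation~\eqref{Vn} and Lemma~\ref{measweak}, while the only genuine task is to establish the compatibility of the forms $\Ecal^{(n)}_v$ in the non-fixed-point setting and identify the correct ambient metric space.

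The first step is to verify that $(\Ecal^{(n)}_v)_{n \geq 0}$ is a compatible sequence, i.e.\ $\Tr(\Ecal^{(n+1)}_v \mid F^n) = \Ecal^{(n)}_v$ for every $n \geq 0$. By the defining relation $\Lambda(\Ecal^{(0)}_u) = \rho(u)^{-1} \Ecal^{(0)}_{\alpha(u)}$, applied with $u = \alpha^{-(n+1)}(v)$, we have $T \circ R(\Ecal^{(0)}_{\alpha^{-(n+1)}(v)}) = \rho(\alpha^{-(n+1)}(v))^{-1} \Ecal^{(0)}_{\alpha^{-n}(v)}$. Combining this with the definition \eqref{Ecaln} and the fact that the replication map $R$ and trace respect the obvious inductive structure of $F^n \subseteq F^{n+1}$ gives the claimed compatibility by a short induction on $n$.

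The second step is to view each $F^n$ and $F$ as a metric space under the resistance metric $R_{\Ecal_v}$ of the limiting form $(\Ecal_v, D_v)$ from Theorem~\ref{nfpDform}. By compatibility, the resistance metric induced on $F^n$ by $\Ecal^{(n)}_v$ coincides with $R_{\Ecal_v}|_{F^n}$, so each $F^n$ is isometrically embedded in $(F, R_{\Ecal_v})$. Theorem~\ref{nfpDform} guarantees that $R_{\Ecal_v}$ induces the original (compact) topology on $F$, so $(F, R_{\Ecal_v})$ is compact. Density of $\bigcup_n F^n$ in $F$ (equation~\eqref{Vn}) then gives $F^n \to F$ in the Hausdorff topology on compact subsets of $(F, R_{\Ecal_v})$, and Lemma~\ref{measweak} supplies $\mu_n \to \mu$ weakly.

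With the setup above, \cite[Theorem 7.1]{croydon2016a} applies verbatim, giving weak convergence of $(X^{v,n}, \Pbb^{x_n})$ to $(X^v, \Pbb^x)$ in $\Dcal_F[0,\infty)$ whenever $x_n \to x$ with $x_n \in F^n$. I expect the only non-routine point to be the compatibility check in Step~1, since the forms $\Ecal^{(n)}_v$ are built from the shifting base forms $\Ecal^{(0)}_{\alpha^{-n}(v)}$ rather than a single fixed form; but this has been engineered precisely so that the telescoping product in $\rho_n(v)$ cancels against the successive eigenvalues of $\Lambda$ under trace, and once this is recorded the rest of the argument is identical to that of Theorem~\ref{fpcvgce}.
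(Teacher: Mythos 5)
Your proposal is correct and follows essentially the same route as the paper: the paper's proof simply defers to the argument of Theorem~\ref{fpcvgce} (isometric embedding of the $F^n$ via the common resistance metric, Hausdorff convergence from density, Lemma~\ref{measweak}, and then \cite[Theorem 7.1]{croydon2016a}), with the nestedness of $(\Ecal^{(n)}_v)_n$ asserted earlier as "simple to show". Your explicit verification of that compatibility via $\Lambda(\Ecal^{(0)}_{\alpha^{-(n+1)}(v)}) = \rho(\alpha^{-(n+1)}(v))^{-1}\Ecal^{(0)}_{\alpha^{-n}(v)}$ and the telescoping of $\rho_{n+1}(v)$ is exactly the intended calculation and is a welcome addition rather than a deviation.
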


\begin{proof}
This follows by an identical argument to Theorem \ref{fpcvgce}.
\end{proof}

Although using a diffferent parameterization, this diffusion corresponds to that constructed in \cite{hattori1994}, \cite{hambly1998} for 
the examples considered in those papers.

\section{Construction of a limiting self-similar set}

We continue with the set-up of Section \ref{nfpd}. We have $\Scal = (F,(\psi_i)_{1 \leq i \leq N})$ a connected generalized p.c.f.s.s.~set with a one-parameter iterable family $\Dbb$ (Assumption \ref{1paramass}, Assumption \ref{alphalimass}). We aim to investigate the limit as 
$v\to\infty$ of our family $\Dbb$.
For $v > v_{\min}$ and $n \geq 0$, let $G^{v,n} = (F^n,\Ecal^{(n)}_v)$ and $\Gcal^v = (F,\Ecal_v)$. 
Let $A^{v,n} = (a^{v,n}_{xy})_{x,y \in F^n}$ be the conductance matrix associated with $G^{v,n}$.

Let $d_{GH}$ be the \textit{Gromov-Hausdorff distance} on the space of metric spaces, as defined in \cite[Definition 7.3.10]{burago2001}. 
This is in fact a metric on the space of isometry classes of non-empty compact metric spaces (\cite[Theorem 7.3.30]{burago2001}).
When working with the Gromov-Hausdorff distance on non-empty compact metric spaces, we will always identify a space with its isometry 
class. We would like to find conditions under which, taking $v \to \infty$, the space $\Gcal^v$ converges in the Gromov-Hausdorff topology 
to a (possibly different) generalized p.c.f.s.s.~set. This will allow us to understand the limiting behaviour of $X^v$. In this section we will 
prove the following:
\begin{enumerate}
\item For each $n \geq 0$, $G^{v,n}$ (equipped with its resistance metric) has a limit $H^n$ in the Gromov-Hausdorff topology as 
$v \to \infty$ (Corollary \ref{finGH}). The metric on $H^n$ is the resistance metric of some conductance matrix.
\item Given a geometric assumption on $F$ and $\Dbb$ (Assumption \ref{similass}), there exists a generalized p.c.f.s.s.~set 
$\Scal_* = (F_*, (\phi_i)_{1 \leq i \leq N})$ such that its sequence of approximating networks (as in \eqref{Vn}) is $(H^n)_{n \geq 0}$ 
(Theorem \ref{S*}).
\item The Dirichlet form associated with $H^0$ is a fixed point of the renormalization map of $\Scal_*$. We can then define a limiting 
Dirichlet form and a diffusion on $F_*$ (Theorem \ref{F*Dform}).
\end{enumerate}
In fact it will eventually be proven that if $F_*$ is equipped with the resistance metric associated with its Dirichlet form, then this metric 
space is the Gromov-Hausdorff limit as $v \to \infty$ of $\Gcal^v$.

\subsection{Preliminary results}

We first need some more precise uniform estimates on the maps $\alpha(v)$ and $\rho(v)$ as well as control on the structure of $\Gcal^v$.
\begin{lem}\label{alphabounds}
Fix a $v_0 > v_{\min}$. Then there exist positive constants $k_\alpha,K_\alpha$ dependent only on $v_0$ such that for all $m,i \geq 0$ 
and all $v \geq v_0$,
\begin{equation*}
k_\alpha \leq \frac{\alpha^{-m-i}(v)}{\alpha^{-m}(v) \beta^i} \leq K_\alpha,
\end{equation*}
\end{lem}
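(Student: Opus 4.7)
The key input is the asymptotics of $\alpha$ from Proposition \ref{alphalim}: since $\alpha$ is a rational function with $\deg p = \deg q + 1$, polynomial division gives $\alpha(v) = v/\beta + c + O(1/v)$ for some constant $c$ as $v \to \infty$. Inverting, I get $\alpha^{-1}(u) = \beta u + O(1)$, and hence
\[ h(u) := \frac{\alpha^{-1}(u)}{u} = \beta + O(1/u) \quad \text{as } u \to \infty. \]

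The plan is to write the ratio of interest as a telescoping product in $h$. Setting $u_k := \alpha^{-k}(v)$, we have $u_{k+1} = h(u_k) u_k$, and so
\[ \frac{\alpha^{-m-i}(v)}{\alpha^{-m}(v)\,\beta^i} = \prod_{j=0}^{i-1} \frac{h(u_{m+j})}{\beta}. \]
It then suffices to show that $\sum_{j=0}^\infty |\log(h(u_{m+j})/\beta)|$ is bounded uniformly in $m$ and in the starting point $v \in [v_0,\infty)$.

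The main obstacle is controlling the iterates $u_k$ uniformly: I need to show that they grow geometrically from the start, not merely eventually. Since $\alpha^{-1}(w) > w$ on $(v_{\min},\infty)$ and $h$ is continuous there, $h$ attains a minimum $> 1$ on any compact subinterval $[v_0, V]$; combined with $h(u) \to \beta > 1$ at infinity, one obtains $\beta_2 := \inf_{u \ge v_0} h(u) > 1$. Thus $u_{m+j} \geq \beta_2^j u_m \geq \beta_2^j v_0$ for every $m,j \geq 0$ and every $v \geq v_0$. In particular every iterate stays in $[v_0,\infty)$, so $h$ is uniformly bounded along the orbit.

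Now pick $U$ large enough that $|\log(h(u)/\beta)| \leq C/u$ for $u \geq U$ (possible by the $O(1/u)$ estimate), and pick $J$ with $\beta_2^J v_0 \geq U$. For $j \geq J$, $u_{m+j} \geq \beta_2^j v_0$ gives
\[ |\log(h(u_{m+j})/\beta)| \leq \frac{C}{\beta_2^j v_0}, \]
which is summable in $j$ independently of $m$. For $j < J$ there are only finitely many (at most $J$) terms, each bounded by $\sup_{u \in [v_0,\infty)} |\log(h(u)/\beta)| < \infty$. Adding the two contributions bounds the sum uniformly, so the telescoping product lies in a fixed compact subinterval of $(0,\infty)$, yielding the constants $k_\alpha, K_\alpha$. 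All constants depend only on $v_0$ (through $\beta_2$, $U$, $J$ and the uniform sup on $[v_0,\infty)$), as required.
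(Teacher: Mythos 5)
Your proof is correct, and it uses the same two ingredients as the paper's: the $O(1/u)$ rate at which the one-step ratio approaches $\beta$, and a uniform geometric lower bound $\beta_2>1$ on $\alpha^{-1}(u)/u$ over $[v_0,\infty)$ (the paper's constant $l_5$), combined through a telescoping product whose logarithm is summable. The decomposition is genuinely different, though. The paper telescopes with the \emph{forward} map, first sandwiching $\alpha^{-m}(v)/(v\beta^m)$ between $v$-dependent constants $k'_\alpha(v),K'_\alpha(v)$, then bounding the two-index ratio by the quotient $k'_\alpha(v)/K'_\alpha(v)$, and finally needs a separate monotonicity argument reducing to the compact interval $[v_0,\alpha^{-1}(v_0)]$ to make the constants uniform in $v$. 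You telescope the target ratio $\alpha^{-m-i}(v)/(\alpha^{-m}(v)\beta^i)$ directly as $\prod_{j=0}^{i-1}h(u_{m+j})/\beta$ with $h(u)=\alpha^{-1}(u)/u$; since every iterate $u_{m+j}$ lies in $[v_0,\infty)$ and grows at least like $\beta_2^j v_0$, uniformity in $m$, $i$ and $v$ is automatic, and both the quotient step and the compact-interval reduction disappear. This buys a shorter argument with transparently uniform constants; the one point requiring care, which you handle correctly, is transferring the $O(1/v)$ expansion from the rational function $\alpha$ to its (non-rational) inverse, using $\alpha^{-1}(u)\geq u$ so the error term remains $O(1/u)$.
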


\begin{proof}
We will first prove that, given $v \geq v_0$, there exist $k'_\alpha(v), K'_\alpha(v) > 0$ such that
\begin{equation*}
k'_\alpha(v) \leq \frac{\alpha^{-m}(v)}{v \beta^m} \leq K'_\alpha(v)
\end{equation*}
for all $m \geq 0$. Recall that by Assumption \ref{alphalimass} we have that
\begin{equation*}
\lim_{v \to \infty} \frac{\alpha(v)}{v} = \frac{1}{\beta} < 1.
\end{equation*}
Along with the fact that $\alpha$ is a positive rational function of finite-degree polynomials, this implies that there exist $l_1, l_2 > 0$ such that
\begin{equation}\label{v-1bd}
1 - \frac{l_1}{v} \leq \frac{\alpha(v) \beta}{v} \leq 1 + \frac{l_2}{v}
\end{equation}
for all $v > v_{\min}$. Since the above lower bound may be negative, we note that we also have constant bounds
\begin{equation*}
0 < l_3 \leq \frac{\alpha(v)\beta}{v} \leq l_4
\end{equation*}
for all $v > v_{\min}$ and note that $l_3 < 1$. By the fact that $\alpha^{-1}$ is continuous, $\alpha^{-1}(v) > v$ for all 
$v \in [v_0,\infty)$ and $\frac{\alpha^{-1}(v)}{v} \to \beta > 1$ as $v \to \infty$, there exists $l_5$ such that
\begin{equation*}
1 < l_5 < \frac{\alpha^{-1}(v)}{v}
\end{equation*}
for all $v \in [v_0,\infty)$. Now write
\begin{equation*}
\begin{split}
\frac{\alpha^{-m}(v)}{v \beta^m} &= \prod_{j=1}^m \left( \frac{\alpha^{-j}(v)}{\alpha^{-(j-1)}(v) \beta} \right)\\
&= \prod_{j=1}^m \left( \frac{\alpha(\alpha^{-j}(v)) \beta}{\alpha^{-j}(v)} \right)^{-1}.
\end{split}
\end{equation*}
We start with the upper bound $K'_\alpha(v)$. We see that
\begin{equation*}
\begin{split}
\left( \frac{\alpha^{-m}(v)}{v \beta^m} \right)^{-1} &\geq \prod_{j=1}^m \max \left\{ 1 - \frac{l_1}{\alpha^{-j}(v)}, l_3 \right\}\\
&\geq \prod_{j=1}^m \max \left\{ 1 - \frac{l_1}{vl_5^j}, l_3 \right\}\\
&\geq \prod_{j=1}^m \max \left\{ 1 - \frac{l_1}{v_{\min}l_5^j}, l_3 \right\}
\end{split}
\end{equation*}
for $v \in [v_0,\infty)$. It suffices to prove that the right-hand side has a strictly positive lower bound over $m \geq 0$. Since for large $j$ 
we will have $1 - \frac{l_1}{v_{\min}l_5^j} > l_3$, it suffices to consider the asymptotics of
\begin{equation*}
\prod_{j=k}^m \left( 1 - \frac{l_1}{v_{\min}l_5^j} \right),
\end{equation*}
where $k$ is high enough such that $\frac{l_1}{v_{\min}l_5^k} < \frac{1}{2}$, say. Note that this expression is decreasing in $m$. 
Taking logarithms and noting that $\log(1+x) \sim x$ for small $x$, there is a constant $l_6 > 0$ such that
\begin{equation*}
\begin{split}
\log \prod_{j=k}^m \left( 1 - \frac{l_1}{v_{\min}l_5^j} \right) &= \sum_{j=k}^m \log \left( 1 - \frac{l_1}{v_{\min}l_5^j} \right)\\
&\geq -l_6 \sum_{j=k}^m \frac{l_1}{v_{\min}l_5^j}\\
&\geq - \frac{l_1 l_5 l_6}{v_{\min}(l_5-1)}\\
&> -\infty.
\end{split}
\end{equation*}
It follows that $\left( \frac{\alpha^{-m}(v)}{v \beta^m} \right)^{-1}$ is bounded away from zero uniformly over all $m$ and so there exists some
$K'_\alpha(v)$ bounding $\frac{\alpha^{-m}(v)}{v \beta^m}$ from above for all $m$. For the lower bound, we have that
\begin{equation*}
\begin{split}
\left( \frac{\alpha^{-m}(v)}{v \beta^m} \right)^{-1} &\leq \prod_{j=1}^m \left( 1 + \frac{l_2}{\alpha^{-j}(v)} \right)\\
&\leq \prod_{j=1}^m \exp \left( \frac{l_2}{\alpha^{-j}(v)} \right)\\
&\leq \prod_{j=1}^m \exp \left( \frac{l_2}{vl_5^j} \right)\\
&\leq \exp \left(\frac{l_2l_5}{v(l_5-1)} \right),
\end{split}
\end{equation*}
and so set
\begin{equation*}
k'_\alpha(v) = \exp \left(\frac{-l_2l_5}{v(l_5-1)} \right).
\end{equation*}

Now we can prove the result. Let us prove $k_\alpha \leq \frac{\alpha^{-m-i}(v)}{\alpha^{-m}(v) \beta^i}$ first. Setting
\begin{equation*}
k''_\alpha(v) = \frac{k'_\alpha(v)}{K'_\alpha(v)},
\end{equation*}
\begin{equation*}
K''_\alpha(v) = \frac{K'_\alpha(v)}{k'_\alpha(v)},
\end{equation*}
it is clear that
\begin{equation*}
k''_\alpha(v) \leq \frac{\alpha^{-m-i}(v)}{\alpha^{-m}(v) \beta^i} \leq K''_\alpha(v)
\end{equation*}
for all $v \geq v_0$ and $m,i \geq 0$.
For the lower bound, our job is to minimize $\frac{\alpha^{-m-i}(v)}{\alpha^{-m}(v) \beta^i}$ over $[v_0,\infty)$. Observe that it is 
enough to minimize it over the much more manageable interval $[v_0,\alpha^{-1}(v_0)]$ since any higher values of $v$ can then be 
covered by increasing the value of $m$. Thus for all $v \in [v_0,\alpha^{-1}(v_0)]$, since $\alpha^{-1}(v)$ is increasing in $v$,
\begin{equation*}
\frac{\alpha^{-m-i}(v)}{\alpha^{-m}(v) \beta^i} \geq \frac{\alpha^{-m-i}(v_0)}{\alpha^{-m}(\alpha^{-1}(v_0)) \beta^i} = 
\frac{\alpha^{-m-i}(v_0)}{\alpha^{-m-1}(v_0) \beta^i} \geq \frac{k_\alpha''(v_0)}{\beta} =: k_\alpha
\end{equation*}
and the proof is complete. The derivation of $K_\alpha$ is essentially the same.
\end{proof}

\begin{lem}\label{Cbounds}
Fix a $v_0 > v_{\min}$. There exist positive constants $k_\rho,K_\rho$ dependent only on $v_0$ such that for all $m,i \geq 0$ and all 
$v \geq v_0$,
\begin{equation*}
k_\rho \leq \frac{\rho_{m+i}(v)}{\rho_m(v) \rho_G^i} \leq K_\rho.
\end{equation*}
\end{lem}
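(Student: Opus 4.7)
The plan is to mirror the proof of Lemma \ref{alphabounds} closely. First I would rewrite the quantity in a telescoping/product form. From the definition of $\rho_n$,
\[
\frac{\rho_{m+i}(v)}{\rho_m(v)\rho_G^i} = \prod_{j=m+1}^{m+i}\frac{\rho(\alpha^{-j}(v))}{\rho_G},
\]
so the task reduces to showing that this product is bounded between two positive constants that depend only on $v_0$, uniformly over $m, i \geq 0$ and $v \geq v_0$. Taking logarithms, it suffices to bound $\sum_{j=m+1}^{m+i}\bigl|\log(\rho(\alpha^{-j}(v))/\rho_G)\bigr|$ by a constant independent of $m,i,v$.

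The next step is to extract a quantitative rate of convergence $\rho(w) \to \rho_G$. By Proposition \ref{CGexist}, $\rho$ is a positive rational function of finite-degree polynomials with finite positive limit $\rho_G$ at infinity, so by comparing numerator and denominator one obtains a constant $C_1 > 0$ (depending only on $v_0$) such that
\[
\left|\frac{\rho(w)}{\rho_G} - 1\right| \leq \frac{C_1}{w} \qquad \text{for all } w \geq v_0.
\]
Since $v_0 > v_{\min} > 0$, we can shrink $C_1$ if necessary so that the quotient is bounded away from zero on $[v_0,\infty)$; then using $|\log(1+x)| \leq C_2 |x|$ on a compact subset of $(-1,\infty)$, we get $|\log(\rho(w)/\rho_G)| \leq C_3/w$ for all $w \geq v_0$, with $C_3$ depending only on $v_0$.

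Now I apply Lemma \ref{alphabounds} to lower-bound the denominators. For $v \geq v_0$ and $j \geq 0$, that lemma with $m = 0$, $i = j$ (and the trivial observation $\alpha^0(v) = v \geq v_0$) gives
\[
\alpha^{-j}(v) \geq k_\alpha v \beta^j \geq k_\alpha v_0 \beta^j.
\]
Since $\alpha^{-j}(v) \geq v_0$ in particular, the rate estimate from the previous paragraph applies, and
\[
\sum_{j=m+1}^{m+i}\left|\log\frac{\rho(\alpha^{-j}(v))}{\rho_G}\right| \leq \sum_{j=m+1}^{m+i}\frac{C_3}{\alpha^{-j}(v)} \leq \frac{C_3}{k_\alpha v_0}\sum_{j=1}^{\infty}\beta^{-j} = \frac{C_3}{k_\alpha v_0(\beta-1)},
\]
where the last sum is finite because $\beta > 1$ by Assumption \ref{alphalimass}. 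Exponentiating, we may set $K_\rho := \exp\bigl(C_3/(k_\alpha v_0(\beta-1))\bigr)$ and $k_\rho := K_\rho^{-1}$, which depend only on $v_0$, completing the proof.

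The only real subtlety is verifying that $\rho(w)/\rho_G - 1 = O(1/w)$ uniformly on $[v_0, \infty)$; this is where the rationality of $\rho$ and, crucially, $v_0 > v_{\min} > 0$ enter, ensuring there are no poles and no sign-change issues on the relevant interval. Once this quantitative rate is in hand, the rest is a routine geometric-sum convergence argument enabled by $\beta > 1$.
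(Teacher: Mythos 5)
Your proof is correct, but it is organised quite differently from the paper's. The paper first establishes the special case $m=0$, i.e.\ $v$-dependent bounds $k'_\rho(v)\le \rho_m(v)/\rho_G^m\le K'_\rho(v)$, divides two such bounds to handle general $m,i$ with constants $k''_\rho(v),K''_\rho(v)$ that still depend on $v$, and then spends the second half of the proof making these uniform over $v\ge v_0$: it proves that $v\mapsto \alpha^{-m}(v)\rho_m(v)$ is non-decreasing via the monotonicity law for conductance networks, and uses this to reduce the minimisation to the compact interval $[v_0,\alpha^{-1}(v_0)]$. You instead bound the tail product $\prod_{j=m+1}^{m+i}\rho(\alpha^{-j}(v))/\rho_G$ directly: the rational-function rate $|\rho(w)/\rho_G-1|=O(1/w)$ combined with the uniform lower bound $\alpha^{-j}(v)\ge k_\alpha v_0\beta^j$ from Lemma~\ref{alphabounds} makes $\sum_{j\ge 1}|\log(\rho(\alpha^{-j}(v))/\rho_G)|$ summable uniformly in $v\ge v_0$, and since your sum starts at $j=m+1\ge 1$ the bound is automatically uniform in $m$ and $i$ as well. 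This buys you a shorter argument that dispenses entirely with the electrical-network monotonicity step and the compact-interval reduction; the ingredients (rationality of $\rho$, Proposition~\ref{CGexist}, $\beta>1$ from Assumption~\ref{alphalimass}, and Lemma~\ref{alphabounds}) are all available at this point in the paper, and there is no circularity since the paper's own proof also invokes Lemma~\ref{alphabounds}.

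One small point of wording: ``shrinking $C_1$'' does not give you that $\rho(w)/\rho_G$ is bounded away from zero. What you actually need is that $\rho$ is a positive rational function with positive limit $\rho_G$ and no poles or zeros on $[v_0,\infty)$, so that $\rho(w)/\rho_G$ lies in a compact subset of $(0,\infty)$ there; this is exactly the paper's bound $l_9\le\rho(v)/\rho_G\le l_{10}$, and it is what licenses the Lipschitz estimate for $\log$ near $1$. With that justification substituted, the argument is complete.
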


\begin{proof}
The proof will proceed in much the same way as in the previous lemma. We first prove that there exist $k'_\rho(v), K'_\rho(v) > 0$ such that
\begin{equation*}
k'_\rho(v) \leq \frac{\rho_m(v)}{\rho_G^m} \leq K'_\rho(v)
\end{equation*}
for all $v \geq v_0$ and $m \geq 0$. We use the same constants as in the proof of Lemma~\ref{alphabounds}, and introduce 
$l_7, l_8 > 0$ satisfying
\begin{equation*}
1 - \frac{l_7}{v} \leq \frac{\rho(v)}{\rho_G} \leq 1 + \frac{l_8}{v}
\end{equation*}
for all $v > v_{\min}$, which is possible since $\rho(v)$ is a positive rational function converging to $\rho_G$ as $v \to \infty$. Also as before 
there exist $l_9,l_{10} > 0$ such that
\begin{equation*}
l_9 \leq \frac{\rho(v)}{\rho_G} \leq l_{10}
\end{equation*}
for all $v > v_{\min}$, where $l_9 < 1$.
Writing
\begin{equation*}
\frac{\rho_m(v)}{\rho_G^m} = \prod_{j=1}^m \frac{\rho(\alpha^{-j}(v))}{\rho_G},
\end{equation*}
for the lower bound we see that
\begin{equation*}
\frac{\rho_m(v)}{\rho_G^m} \geq \prod_{j=1}^m \max \left\{ 1 - \frac{l_7}{\alpha^{-j}(v)}, l_9 \right\}.
\end{equation*}
We bound the right-hand side away from zero by taking logarithms in exactly the same way as in the proof of Lemma~\ref{alphabounds}. 
For the upper bound,
\begin{equation*}
\frac{\rho_m(v)}{\rho_G^m} \leq \prod_{j=1}^m \left( 1 + \frac{l_8}{\alpha^{-j}(v)} \right)
\end{equation*}
which we bound by an exponential just as in the proof of Lemma~\ref{alphabounds}. So we have shown the existence of $k'_\rho(v), K'_\rho(v)$.

As before we now set
\begin{equation*}
k''_\rho(v) = \frac{k'_\rho(v)}{K'_\rho(v)},
\end{equation*}
\begin{equation*}
K''_\rho(v) = \frac{K'_\rho(v)}{k'_\rho(v)},
\end{equation*}
so that
\begin{equation*}
k''_\rho(v) \leq \frac{\rho_{m+i}(v)}{\rho_m(v) \rho_G^i} \leq K''_\rho(v)
\end{equation*}
for all $v \geq v_0$ and $m,i \geq 0$. From the definition we know that
\begin{equation*}
\frac{\rho_{m+i}(v)}{\rho_m(v)} = \prod_{j = 1}^i \rho(\alpha^{-(m+j)}(v)).
\end{equation*}
To do something similar to the above we need to prove some kind of monotonicity property. We will prove that the function 
$v \mapsto \alpha^{-m}(v)\rho_m(v)$ is non-decreasing for any $m$. Indeed, consider the conductance network $(F^m, \Ecal^{(m)}_v)$, 
where all edges have had their conductance multiplied by a factor of $(\alpha^{-m}(v)\rho_m(v))^{-1}$. That is, this conductance 
network has conductances of the form $r_w^{-1}$ or $r_w^{-1}\alpha^{-m}(v)^{-1}$ for $w \in \Wbb_n$. Taking the trace onto $F^0$, 
we see that the corresponding conductance network on $F^0$ has conductances of either $(\alpha^{-m}(v)\rho_m(v))^{-1}$ or 
$v(\alpha^{-m}(v)\rho_m(v))^{-1}$. Consider the monotonicity law of conductance networks: decreasing some of the edge conductances 
of the network cannot increase any of its effective conductances. On the other hand, if we were to strictly increase all of the edge conductances of a network, all of its effective conductances would necessarily strictly increase. It follows that, since $\alpha^{-m}(v)^{-1}$ is decreasing 
in $v$, $(\alpha^{-m}(v)\rho_m(v))^{-1}$ cannot be increasing in $v$. So $\alpha^{-m}(v)\rho_m(v)$ is non-decreasing in $v$.

Now as in Lemma~\ref{alphabounds}, we can similarly minimize only over $[v_0,\alpha^{-1}(v_0)]$. Using Lemma~\ref{alphabounds},
\begin{equation*}
\begin{split}
\frac{\rho_{m+i}(v)}{\rho_m(v) \rho_G^i} &= \frac{\alpha^{-m}(v)}{\alpha^{-m-i}(v)} \frac{\alpha^{-m-i}(v)\rho_{m+i}(v)}
{\alpha^{-m}(v)\rho_m(v) \rho_G^i}\\
&\geq \frac{\alpha^{-m}(v_0)}{\alpha^{-m-i}(\alpha^{-1}(v_0))}\frac{\alpha^{-m-i}(v_0)\rho_{m+i}(v_0)}
{\alpha^{-m}(\alpha^{-1}(v_0))\rho_m(\alpha^{-1}(v_0)) \rho_G^i}\\
&\geq \frac{k_\alpha \beta^{i-1}}{K_\alpha \beta^{i+1}} \frac{\rho_{m+i}(v_0)\rho(\alpha^{-1}(v_0))}{\rho_{m+1}(v_0) \rho_G^i}\\
&\geq \frac{k_\alpha \rho(\alpha^{-1}(v_0))k_\rho''(v_0)}{K_\alpha \beta^2 \rho_G}
\end{split}
\end{equation*}
and we define the right-hand side to be $k_\rho$. The derivation of $K_\rho$ is similar. 
\end{proof}

\begin{cor}\label{Cnlimit}
For any $v \geq v_0 > v_{\min}$,
\begin{equation*}
\lim_{m \to \infty} \frac{\alpha^{-m}(v)}{v\beta^m} \in [k_\alpha,K_\alpha] \subset (0,\infty),
\end{equation*}
\begin{equation}\label{eq:rholim}
\lim_{m \to \infty} \frac{\rho_m(v)}{\rho_G^m} \in [k_\rho,K_\rho] \subset (0,\infty). 
\end{equation}
\end{cor}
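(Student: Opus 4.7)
The plan is to upgrade the two-sided uniform bounds of Lemmas~\ref{alphabounds} and \ref{Cbounds} into actual convergence; containment of each limit in the specified interval then follows by passing to the limit ($i \to \infty$ with $m = 0$) in the respective bound. So the only thing to establish is that the two sequences $a_m := \alpha^{-m}(v)/(v\beta^m)$ and $b_m := \rho_m(v)/\rho_G^m$ converge.

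For $a_m$, set $u_m := \alpha^{-m}(v)$, so that
$$\frac{a_{m+1}}{a_m} = \frac{u_{m+1}}{u_m\beta} = \left(\frac{\alpha(u_{m+1})\beta}{u_{m+1}}\right)^{-1}.$$
I would invoke the estimate already used inside the proof of Lemma~\ref{alphabounds}, namely that $\alpha$ is a positive rational function with $\alpha(v)/v \to 1/\beta$, which yields $\alpha(v)\beta/v = 1 + O(v^{-1})$ as $v \to \infty$; hence $a_{m+1}/a_m = 1 + O(u_{m+1}^{-1})$. Lemma~\ref{alphabounds} itself furnishes the geometric lower bound $u_m \geq v k_\alpha \beta^m$ with $\beta > 1$, so the series $\sum_m u_m^{-1}$ converges. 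Consequently $\sum_m \log(a_{m+1}/a_m)$ converges absolutely, $(\log a_m)$ is Cauchy, and $a_m$ tends to a positive limit, which must lie in $[k_\alpha,K_\alpha]$.

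For $b_m$ the argument is symmetric: the ratio
$$\frac{b_m}{b_{m-1}} = \frac{\rho(u_m)}{\rho_G}$$
equals $1 + O(u_m^{-1})$ because $\rho$ is a positive rational function converging to the finite positive limit $\rho_G$ by Proposition~\ref{CGexist}. The same summability of $\sum_m u_m^{-1}$ forces $b_m$ to converge, with limit in $[k_\rho,K_\rho]$ by Lemma~\ref{Cbounds}. There is no genuine obstacle beyond recycling the quantitative estimates built into the two preceding lemmas; the geometric growth of the orbit $(u_m)$ under $\alpha^{-1}$ is what makes the small multiplicative corrections summable and hence converts boundedness into convergence.
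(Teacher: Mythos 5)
Your proof is correct, but it converts boundedness into convergence by a different mechanism than the paper does. The paper's own proof of Corollary~\ref{Cnlimit} is soft: it notes that $v/\alpha(v)$ (resp.\ $\rho$) is a positive rational function, hence eventually monotone, so $\alpha^{-1}(u)/(u\beta)$ (resp.\ $\rho(u)/\rho_G$) approaches $1$ monotonically along the orbit $u_m=\alpha^{-m}(v)\to\infty$; therefore the sequence $a_m$ (resp.\ $b_m$) is eventually monotone, and a bounded eventually-monotone sequence converges, with the limit trapped in $[k_\alpha,K_\alpha]$ (resp.\ $[k_\rho,K_\rho]$) by Lemmas~\ref{alphabounds} and~\ref{Cbounds}. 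You instead show the consecutive ratios are $1+O(u_m^{-1})$ with $u_m$ growing geometrically (using $u_m\geq k_\alpha v\beta^m$, $\beta>1$), so that $\sum_m|\log(a_{m+1}/a_m)|<\infty$ and $(\log a_m)$ is Cauchy --- which is precisely the summability computation already carried out \emph{inside} the proofs of Lemmas~\ref{alphabounds} and~\ref{Cbounds} to produce the uniform bounds in the first place. Both arguments rest on the same quantitative inputs (the $1\pm l/v$ estimates \eqref{v-1bd} and their analogue for $\rho$, plus geometric growth of the orbit); yours avoids invoking eventual monotonicity of rational functions at the cost of rerunning the Cauchy estimate, while the paper's is shorter because it reuses the lemmas as black boxes and adds only the monotonicity observation. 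One small point worth making explicit in your write-up: the identification of the limit's location uses the $m=0$ case of each lemma (with $\alpha^{-0}(v)=v$ and $\rho_0(v)=1$), which places every term, and hence the limit, in the stated closed interval.
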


\begin{proof}
$\frac{v}{\alpha(v)}$ is a positive rational function so is monotone in $v$ for sufficiently large $v$. Therefore $\frac{\alpha^{-1}(v)}{v}$ 
is monotone for sufficiently large $v$. Since $\frac{\alpha^{-1}(v)}{v\beta} \to 1$ as $v \to \infty$, this implies that $\frac{\alpha^{-m}(v)}
{v\beta^m}$ is a monotone sequence in $m$ for sufficiently large $m$. By Lemma~\ref{alphabounds} this sequence is bounded in 
$[k_\alpha,K_\alpha]$ so it must converge in this interval. Similarly for the second statement, using Lemma~\ref{Cbounds}.
\end{proof}

\begin{lem}\label{diambound}
For any $v_0 > v_{\min}$,
\begin{equation*}
\sup_{v \geq v_0} \diam \Gcal^v < \infty.
\end{equation*}
\end{lem}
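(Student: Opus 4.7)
The plan is to exploit the nested structure of the forms $(\Ecal^{(n)}_v)_n$ together with the asymptotic regularity $r_{\max}/\rho_G < 1$ (Proposition~\ref{CGexist}) to show that the diameters $D_n^v := \diam(F^n, R_{\Ecal^{(n)}_v})$ form a geometrically convergent Cauchy sequence, bounded uniformly in $v \geq v_0$. Since $R_{\Ecal^{(n)}_v} = \Tr(R_{\Ecal_v} \mid F^n)$ preserves distances on $F^n$ and $\bigcup_n F^n$ is dense in $F$, we have $\diam \Gcal^v = \sup_n D_n^v$, so a uniform bound on $D_n^v$ proves the claim.

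First I would establish the base estimate $\sup_{v \geq v_0} \diam(F^0, R_v) \leq C_0 < \infty$. Since $F$ is connected, $(F^0, \Ecal^{(0)}_v)$ is a finite connected graph whose edge conductances are either $1$ or $v$, and since $v \geq v_0 > v_{\min} \geq 0$, all edge conductances are bounded below by $\min(1, v_0) > 0$; thus any spanning-tree path between two boundary points has resistance at most $(\#F^0 - 1)/\min(1, v_0)$.

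Next I would bound, uniformly in $n$ and $v \geq v_0$, the resistance diameter of each $(n-1)$-complex at level $n$. By self-similarity,
\[
\Ecal^{(n)}_v\big|_{\psi_{w'}(F) \cap F^n} \;=\; \rho_n(v) r_{w'}^{-1} \, R\bigl(\Ecal^{(0)}_{\alpha^{-n}(v)}\bigr)(\,\cdot \circ \psi_{w'}, \cdot \circ \psi_{w'}\,)
\]
for $w' \in \Wbb_{n-1}$, so by monotonicity of resistance,
\[
\diam\bigl(\psi_{w'}(F)\cap F^n, R_{\Ecal^{(n)}_v}\bigr) \;\leq\; \rho_n(v)^{-1} r_{w'}\, \diam\bigl(F^1, R_{R(\Ecal^{(0)}_{\alpha^{-n}(v)})}\bigr).
\]
The diameter of $F^1$ on the right is bounded uniformly in $n, v$ by a triangle inequality $\diam F^1 \leq \diam F^0 + 2\max_i \diam \psi_i(F^0)$ together with the base estimate $C_0$ from step one: the $F^0$ piece gives $\rho(\alpha^{-n}(v)) \cdot C_0 \leq \rho_{\max} C_0$ and each $\psi_i(F^0)$ piece gives $r_i C_0 \leq r_{\max} C_0$. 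Combining with Lemma~\ref{Cbounds}, which yields $\rho_n(v)^{-1} r_{w'} \leq k_\rho^{-1} (r_{\max}/\rho_G)^n \cdot r_{\max}^{-1}$, produces a bound of the form $C_1 \gamma^n$ with $\gamma := r_{\max}/\rho_G < 1$, uniform in $v \geq v_0$ and $w' \in \Wbb_{n-1}$.

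Finally I would set up the recursion: for any $x, y \in F^n$, pick $(n-1)$-complexes $\psi_{w'_x}(F), \psi_{w'_y}(F)$ containing $x$ and $y$, pick boundary points $a \in \psi_{w'_x}(F^0) \subset F^{n-1}$ and $b \in \psi_{w'_y}(F^0) \subset F^{n-1}$, and apply the triangle inequality to get
\[
D_n^v \;\leq\; D_{n-1}^v + 2 C_1 \gamma^n.
\]
Iterating and summing the geometric series yields $D_n^v \leq D_0^v + 2 C_1 \gamma/(1-\gamma) \leq C_0 + 2C_1 \gamma/(1-\gamma)$ for all $n$, uniformly in $v \geq v_0$. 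Taking the supremum over $n$ completes the proof.

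The main obstacle is the third step: obtaining the uniform bound on $\diam(F^1, R_{R(\Ecal^{(0)}_{\alpha^{-n}(v)})})$ requires juggling three quantities — the trace identity $\Tr(R(\Ecal^{(0)}_{\alpha^{-n}(v)}) \mid F^0) = \rho(\alpha^{-n}(v))^{-1}\Ecal^{(0)}_{\alpha^{-n+1}(v)}$, the uniform base bound on $\diam F^0$, and the bound on $\rho$ — and relies crucially on the asymptotic regularity ratio $r_{\max}/\rho_G < 1$ from Proposition~\ref{CGexist}, without which the geometric decay $\gamma^n$ would fail.
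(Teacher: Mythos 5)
Your proposal is correct and follows essentially the same route as the paper: a uniform bound on the diameter of the level-one network, the scaling estimate $r_{\max}^n\rho_n(v)^{-1}\leq k_\rho^{-1}(r_{\max}/\rho_G)^n$ from Lemma~\ref{Cbounds}, summation of the resulting geometric series using $r_{\max}<\rho_G$, and density of $\bigcup_{n\geq 0}F^n$ in $F$. One small inaccuracy: the inequality $\diam F^1\leq\diam F^0+2\max_i\diam\psi_i(F^0)$ presumes every $1$-cell meets $F^0$, which can fail (e.g.\ the central cell of the Vicsek set); replace it by chaining through at most $N$ pairwise-overlapping cells to get $\diam F^1\leq N\max_i\diam\psi_i(F^0)$, after which the rest of your argument goes through unchanged.
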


\begin{proof}
An adaptation of \cite[Proposition 7.10]{barlow1998} will suffice. First of all, observe that
\begin{equation*}
c := \sup_{v > v_{\min}} \diam G^{v,1} < \infty.
\end{equation*}
Fix $v \geq v_0$. Let $x \in G^{v,n}$ for some $n \geq 0$. Then there exists $w \in \Wbb_{n-1}$ and $x_1 \in G^{v,n-1}$ such that 
$x,x_1 \in F_w$. Thus
\begin{equation*}
R_v(x,x_1) \leq c r_w\rho_{n-1}(v)^{-1}.
\end{equation*}
Iterating this process, we find $y \in G^{v,0}$ such that
\begin{equation*}
R_v(x,y) \leq c \sum_{i=0}^{n-1}r_{\max}^i\rho_i(v)^{-1} \leq \frac{c\rho_G}{k_\rho(\rho_G-r_{\max})}
\end{equation*}
where we have used Lemma~\ref{Cbounds}. Thus for any $x,y \in \bigcup_{n \geq 0} F^n$,
\begin{equation*}
R_v(x,y) \leq \frac{2c\rho_G}{k_\rho(\rho_G-r_{\max})} + c.
\end{equation*}
Then $F$ is the closure of $\bigcup_{n \geq 0} F^n$ so we are done.
\end{proof}

\subsection{Finite approximations}

\begin{defn}
Fix $n \geq 0$. For $x,y \in F^n$, if $x = y$ or if there exists a path between $x$ and $y$ in $G^{v,n}$ consisting only of edges 
with conductance of the form $r_w^{-1} \rho_n(v)\alpha^{-n}(v)$, then we say that $x \sim y$. By Assumption~\ref{alphalimass}, 
$\sim$ is independent of $n$. Thus $\sim$ is an equivalence relation on $\bigcup_{n \geq 0} F^n$.
\end{defn}

As $v \to \infty$, the resistance between certain pairs $x,y$ of points in $\Gcal^v$ tends to zero. This suggests that if any 
Gromov-Hausdorff limit of the spaces $\Gcal^v$ were to exist as $v \to \infty$, then these pairs of points would have to be regarded 
as the same in that space. The limit space is thus unlikely to have the same underlying set $F$.

We start by identifying points of our original fractal for which the effective resistance decreases to zero as $v \to \infty$. For each 
$n \geq 0$ define the conductance network $(F_*^n,A^n)$ as follows: $F_*^n$ is the result of taking $F^n$ and identifying all pairs 
of points $x,y \in F^n$ for which the effective resistance between $x$ and $y$ in $G^{v,n}$ tends to zero as $v \to \infty$. 
Equivalently, $F^n_*$ is the set of equivalence classes of $F^n$ under $\sim$. The $A^n$-conductance between two distinct 
$y_1,y_2 \in F^n_*$ is defined to be
\begin{equation*}
a^n_{y_1y_2} = \sum_{\substack{x \in y_1 \\ z \in y_2}} \lim_{v \to \infty} a^{v,n}_{xz}.
\end{equation*}
It is thus always an integer multiple of $\rho_G^n$. The natural measure to use on $F^n_*$ is $\nu_n$, where for $y \in F^n_*$,
\begin{equation*}
\nu_n (\{ y \}) = \mu_n(y) = \sum_{x \in y} \mu_n(\{x\}).
\end{equation*}
That is, each class is given a mass equal to the sum of the original masses of its constituent elements. Let $\Ecal^{(n)}_*$ be the 
Dirichlet form associated with $(F_*^n,A^n)$ and let $H^n = (F_*^n,\Ecal^{(n)}_*)$. Each $\Ecal^{(n)}_*$ is clearly irreducible.

\subsection{The projection map}

The identification of points through $\sim$ induces a surjective \textit{projection map} $p: F^n \to F_*^n$ for each $n \geq 0$ which takes 
each point of $F^n$ to the class in $F_*^n$ that contains it. Recall that the equivalence relation $\sim$ is independent of $n$. This means 
that if $n > m$ then $F_*^m$ can naturally be thought of as a subset of $F_*^n$, analogously to how $F^m$ is a subset of $F^n$. Each 
element of $F_*^m$ is identified with the element of $F_*^n$ of which it is a subset. Nesting the $F_*^n$ in this way hence preserves 
the definition of the projection map $p$, so that $p$ is independent of $n$. Thus we can extend $p$ to a surjective function
\begin{equation}\label{Pintermediate}
p: \bigcup_{n \geq 0} F^n \to \bigcup_{n \geq 0} F_*^n
\end{equation}
which agrees with its previous definition when restricted to $F^n$ for any $n \geq 0$.
\begin{rem}
Each measure $\nu_n$ on $F^n_*$ is the pushforward through $p$ of the respective measure $\mu_n$ on $F^n$.
\end{rem}

We now prove a useful topological result, which is similar to results in \cite{munkres1974}:

\begin{lem}\label{topo}
Let $E_1,E_2$ be topological spaces, and $f:E_1 \times E_2 \to (-\infty,\infty]$ a continuous function (where $(-\infty,\infty]$ is the space $\Rbb$ compactified on the right). Suppose $E_2$ is compact. 
Let $g:E_1 \to (-\infty,\infty]$ be given by $g(x) = \inf_{y \in E_2} f(x,y)$. Then $g$ is a well-defined continuous function.
\end{lem}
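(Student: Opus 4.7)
The plan is to establish well-definedness first and then prove continuity by splitting into upper and lower semicontinuity. The compactness of $E_2$ is used twice, once to ensure the infimum defining $g(x)$ is attained (or is $+\infty$), and once to extract a convergent subnet in the lower-semicontinuity argument.

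For well-definedness, fix $x \in E_1$. The section $f(x,\cdot): E_2 \to (-\infty,\infty]$ is continuous, so for each $n \in \Nbb$ the set $\{y \in E_2 : f(x,y) > -n\}$ is open, and these sets cover $E_2$; compactness yields a uniform lower bound, so $g(x) > -\infty$. If $g(x) < +\infty$, the nested family of closed sets $\{y : f(x,y) \leq g(x) + 1/n\}$ is nonempty for each $n$, so by the finite intersection property in the compact space $E_2$ there exists a minimizer $y_* \in E_2$ with $f(x,y_*) = g(x)$; if $g(x) = +\infty$ then $f(x,\cdot) \equiv +\infty$ and every $y$ is trivially a minimizer. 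Thus $g$ is well-defined.

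For upper semicontinuity at $x_0$, I would split on whether $g(x_0) < \infty$. If so, pick a minimizer $y_0 \in E_2$ with $f(x_0,y_0) = g(x_0)$; then for any net $x_\lambda \to x_0$, joint continuity of $f$ gives $g(x_\lambda) \leq f(x_\lambda,y_0) \to f(x_0,y_0) = g(x_0)$, so $\limsup g(x_\lambda) \leq g(x_0)$. The case $g(x_0) = +\infty$ is vacuous. For lower semicontinuity, I argue by contradiction: suppose there is a net $x_\lambda \to x_0$ with $\limsup$ replaced by $\liminf g(x_\lambda) < g(x_0)$, i.e.\ along some subnet $g(x_\lambda) \to L < g(x_0)$. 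Select $y_\lambda \in E_2$ with $f(x_\lambda, y_\lambda) \leq g(x_\lambda) + \epsilon_\lambda$ for some $\epsilon_\lambda \downarrow 0$ (possible by the argument above, taking $y_\lambda$ an approximate minimizer if $g(x_\lambda) < \infty$, any point otherwise). By compactness of $E_2$, some subnet $y_{\lambda'} \to y_* \in E_2$; then $(x_{\lambda'}, y_{\lambda'}) \to (x_0,y_*)$ in the product topology, so continuity of $f$ gives $f(x_{\lambda'},y_{\lambda'}) \to f(x_0,y_*) \geq g(x_0) > L$, contradicting $f(x_{\lambda'}, y_{\lambda'}) \to L$. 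Combining the two bounds, $g$ is continuous.

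The only mild subtlety is the extended-real codomain: I would verify that all limits in $(-\infty,\infty]$ behave correctly, noting that if $f(x_0,y_*) = +\infty$ then by continuity $f(x_{\lambda'},y_{\lambda'}) \to +\infty$, which still contradicts convergence to the finite value $L$. The argument uses nets rather than sequences since $E_1,E_2$ are only assumed to be topological spaces; no extra machinery is needed beyond the standard parametric-minimum argument. This is the main (and only) technical point, but it is routine.
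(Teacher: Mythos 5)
Your proof is correct, but it takes a genuinely different route from the paper's. The paper argues purely point-set-topologically: it checks that $g^{-1}(S)$ is open for each sub-basic open set $S$ of $(-\infty,\infty]$, handling $S=(-\infty,a)$ by writing $g^{-1}(S)=\pi_1(f^{-1}(S))$ and using that the projection $\pi_1$ is an open map (this is your upper semicontinuity), and handling $S=(a,\infty]$ by a tube-lemma argument -- covering $\{x_0\}\times E_2$ by basic open boxes inside $f^{-1}(S)$, extracting a finite subcover by compactness of $E_2$, and intersecting the first factors (this is your lower semicontinuity). Your version replaces this with the standard Berge-maximum-theorem style argument via nets and convergent subnets of minimizers; it is arguably more familiar to an analyst, whereas the paper's avoids nets entirely and is self-contained at the level of open sets. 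Two small points in your write-up: (i) choosing $\epsilon_\lambda\downarrow 0$ indexed by an arbitrary directed set is slightly awkward -- but since you have already shown the infimum is \emph{attained}, you can simply take $y_\lambda$ to be an exact minimizer, $f(x_\lambda,y_\lambda)=g(x_\lambda)$, and the issue disappears; (ii) both your argument and the paper's implicitly assume $E_2\neq\emptyset$ (otherwise $g\equiv+\infty$ and the claim is trivial anyway). Neither is a genuine gap.
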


\begin{proof}
Since $E_2$ is compact, the infimum is well-defined and moreover for each $x \in E_1$ the infimum is attained by some $y \in E_2$.
A sub-basis of the topology on $(-\infty,\infty]$ is
\begin{equation*}
\left\{ (-\infty,a):a \in (-\infty,\infty] \right\} \cup \left\{ (a,\infty]:a \in (-\infty,\infty) \right\}.
\end{equation*}
It is enough to show that $g^{-1}(S)$ is open for each set $S$ in the above sub-basis.

If $S= (-\infty,a)$, then
\begin{equation*}
\begin{split}
g^{-1}(S) &= \left\{ x \in E_1: \exists y \in E_2: f(x,y) < a \right\}\\
&= \pi_1(f^{-1}(S))
\end{split}
\end{equation*}
where $\pi_1$ is the projection map $E_1 \times E_2 \to E_1$, which is an open map. $f^{-1}(S)$ is open, thus $g^{-1}(S)$ is open.

If $S= (a,\infty]$, then since the infimum is always attained,
\begin{equation*}
g^{-1}(S) = \left\{ x \in E_1: \forall y \in E_2, f(x,y) > a \right\}.
\end{equation*}
If $g^{-1}(S)$ is empty then the proof is done, so we assume that $g^{-1}(S)$ is non-empty. Again we consider $f^{-1}(S)$. Since this is 
open, then by the definition of the product topology there exist for each $(x,y) \in f^{-1}(S)$ open sets $U_{xy} \in E_1$ and 
$V_{xy} \in E_2$ such that $(x,y) \in U_{xy} \times V_{xy} \subseteq f^{-1}(S)$. Now we fix $x_0 \in g^{-1}(S)$. This means that 
$(x_0,y) \in f^{-1}(S)$ for all $y \in E_2$, so the open sets $\{V_{x_0y}:(x_0,y) \in f^{-1}(S)\}$ cover $E_2$. $E_2$ is compact so there 
is a finite subcover $\{V_{x_0y_i}\}_i$. Now let
\begin{equation*}
U_{x_0} = \bigcap_i U_{x_0y_i},
\end{equation*}
which is a finite intersection, thus open in $E_1$. Moreover, $U_{x_0} \times V_{x_0y_i} \subseteq f^{-1}(S)$ for every $i$, so
\begin{equation*}
U_{x_0} \times E_2 = U_{x_0} \times \bigcup_i V_{x_0y_i} \subseteq f^{-1}(S).
\end{equation*}
This implies that $U_{x_0} \subseteq g^{-1}(S)$. The set $U_{x_0}$ is open and $x_0 \in U_{x_0}$. The point $x_0$ was arbitrary, 
so $g^{-1}(S)$ is open.
\end{proof}
Our analysis of the limit space will be clearer if we define the \textit{preimages} of the Dirichlet forms $\Ecal^{(n)}_*$ on $F^n$. 
Let $C(F^n)$ be the space of (continuous) real-valued functions on $F^n$. For each $n$, we define a form $\Ecal^{(n)}_\infty$ on the domain
\begin{equation*}
C_*(F^n) := \{ f \in C(F^n): x \sim y \Rightarrow f(x) = f(y) \}
\end{equation*}
such that
\begin{equation*}
\Ecal^{(n)}_\infty(f,f) = \Ecal^{(n)}_*(f \circ p^{-1}, f \circ p^{-1}).
\end{equation*}
\begin{rem}
Note that $C_*(F^n)$ is closed in $C(F^n)$, and that $f \circ p^{-1}$ makes sense here because of the definition of $C_*(F^n)$.
\end{rem}
Defining $\Ecal^{(n)}_\infty(f,f) = \infty$ for all $f \in C(F^n) \setminus C_*(F^n)$, we importantly have that for all $f \in C(F^n)$,
\begin{equation*}
\Ecal^{(n)}_\infty(f,f) = \lim_{v \to \infty} \Ecal^{(n)}_v(f,f).
\end{equation*}
With this set-up, the pairs $G^{\infty,n} := (F^n,\Ecal^{(n)}_\infty)$ and $H^n = (F_*^n,\Ecal^{(n)}_*)$ define essentially the same 
structure, and so this provides an analytical link between the networks $(G^{v,n})_{v > v_{\min}}$ and $H^n$. This allows us, for 
example, to prove the following key result:
\begin{prop}\label{Btrace}
Let $m < n$. Then the trace of $H^n$ on $F_*^m$ is $H^m$.
\end{prop}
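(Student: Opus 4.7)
The plan is to translate the claim, via the projection map $p$, into a statement about the ``preimage forms'' $\Ecal^{(n)}_\infty$ on $F^n$, and then prove that resulting trace identity by passing to the limit $v\to\infty$ in the known nested-form identity $\Tr(\Ecal^{(n)}_v|F^m)=\Ecal^{(m)}_v$. Concretely, for $h\in C(F^m_*)$ set $\tilde h:=h\circ p\in C_*(F^m)$. Since $\Ecal^{(k)}_*(g,g)=\Ecal^{(k)}_\infty(g\circ p,g\circ p)$ by definition, and since $\sim$ is independent of $n$, every extension in $C_*(F^n)$ corresponds bijectively to an extension in $C(F^n_*)$, so the proposition reduces to
\[
\Ecal^{(m)}_\infty(\tilde h,\tilde h)\;=\;\inf\!\left\{\Ecal^{(n)}_\infty(f,f):f\in C(F^n),\ f|_{F^m}=\tilde h\right\}.
\]

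For the easy direction ($\leq$), pick any $f\in C_*(F^n)$ extending $\tilde h$ (such an $f$ exists because $\sim$-classes of $F^n$ restrict to $\sim$-classes of $F^m$ and $\tilde h$ is already constant on the latter). The nested property gives $\Ecal^{(m)}_v(\tilde h,\tilde h)\leq \Ecal^{(n)}_v(f,f)$ for each $v$, and letting $v\to\infty$ yields $\Ecal^{(m)}_\infty(\tilde h,\tilde h)\leq \Ecal^{(n)}_\infty(f,f)$; take the infimum on the right. Note that restricting the infimum to $C_*(F^n)$ is harmless, since $\Ecal^{(n)}_\infty\equiv\infty$ on $C(F^n)\setminus C_*(F^n)$.

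For the hard direction ($\geq$), let $g_v$ be the unique $\Ecal^{(n)}_v$-harmonic extension of $\tilde h$, so by construction $\Ecal^{(n)}_v(g_v,g_v)=\Ecal^{(m)}_v(\tilde h,\tilde h)$. The maximum principle bounds $g_v$ uniformly, so along a subsequence $v_j\to\infty$ we have $g_{v_j}\to g_\infty$ for some $g_\infty$ extending $\tilde h$. Split the edges of $G^{v,n}$ into $E_\infty$ (the ``$v$-type'' edges with conductance $c_e(v)=r_w^{-1}\rho_n(v)\alpha^{-n}(v)\to\infty$) and $E_{\mathrm{fin}}$ (the rest, with finite positive limits). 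For any $e=(x,y)\in E_\infty$,
\[
c_e(v_j)(g_{v_j}(x)-g_{v_j}(y))^2\;\leq\;\Ecal^{(n)}_{v_j}(g_{v_j},g_{v_j})\;=\;\Ecal^{(m)}_{v_j}(\tilde h,\tilde h)\;\longrightarrow\;\Ecal^{(m)}_\infty(\tilde h,\tilde h)<\infty,
\]
so $g_\infty(x)=g_\infty(y)$; since $\sim$ is generated by such direct edges, $g_\infty\in C_*(F^n)$. Writing the energy as a sum over $E_\infty\sqcup E_{\mathrm{fin}}$, the $E_\infty$ part is non-negative and the $E_{\mathrm{fin}}$ part converges (finitely many edges, continuous $c_e(v)$, $g_{v_j}\to g_\infty$) to $\Ecal^{(n)}_\infty(g_\infty,g_\infty)$. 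Hence $\Ecal^{(n)}_\infty(g_\infty,g_\infty)\leq \lim_j\Ecal^{(n)}_{v_j}(g_{v_j},g_{v_j})=\Ecal^{(m)}_\infty(\tilde h,\tilde h)$, completing the reverse inequality.

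The main obstacle is the identification $g_\infty\in C_*(F^n)$: it is what makes sense of the trace argument, since only on $C_*(F^n)$ is the limit form $\Ecal^{(n)}_\infty$ finite. The control comes from the trace identity $\Ecal^{(n)}_v(g_v,g_v)=\Ecal^{(m)}_v(\tilde h,\tilde h)$ and the finiteness of $\Ecal^{(m)}_\infty(\tilde h,\tilde h)$ (itself ensured by the existence of a reference extension in $C_*(F^n)$, as above). Everything else is a finite-dimensional compactness/continuity argument on $F^n$, using that $\rho(v)\to\rho_G<\infty$ (Proposition~\ref{CGexist}) and $\alpha(v)\to\infty$ (Proposition~\ref{alphalim}) so the conductances split cleanly into the two regimes $E_\infty$ and $E_{\mathrm{fin}}$.
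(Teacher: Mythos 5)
Your proof is correct, but it takes a genuinely different route from the paper's. Both arguments reduce the claim, via $p$, to the identity $\Ecal^{(m)}_\infty(\tilde h,\tilde h)=\inf\{\Ecal^{(n)}_\infty(f,f): f|_{F^m}=\tilde h\}$ and exploit the finite-$v$ compatibility $\Tr(\Ecal^{(n)}_v|F^m)=\Ecal^{(m)}_v$; the difference lies in how the limit $v\to\infty$ is passed through the infimum. The paper restricts the competitors to the compact set of extensions taking values in $[\min f,\max f]$ and then invokes Lemma~\ref{topo} (continuity of an infimum over a compact set) together with continuity of $v\mapsto\Ecal^{(n)}_v(g,g)$ to interchange $\lim_{v\to\infty}$ with $\inf$ in one stroke. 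You instead prove the two inequalities separately: the easy one by passing to the limit along a fixed competitor in $C_*(F^n)$, and the hard one by a $\Gamma$-convergence-style argument --- extract a subsequential limit $g_\infty$ of the $\Ecal^{(n)}_v$-harmonic extensions, play the blow-up of the $v$-type conductances against the uniform energy bound $\Ecal^{(m)}_v(\tilde h,\tilde h)$ to force $g_\infty\in C_*(F^n)$, and use lower semicontinuity of the energy restricted to the finite-conductance edges. Your version is more elementary (it avoids Lemma~\ref{topo}, which the paper however reuses in Proposition~\ref{metriccvgce} and Theorem~\ref{F*Dform}) and yields slightly more, namely subsequential convergence of the minimizers themselves; the paper's version is shorter and delegates the analysis to a single reusable lemma. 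One point worth making explicit: a single edge of $G^{v,n}$ may receive conductance contributions of both types (a term $r_w^{-1}\rho_n(v)$ and a term $r_{w'}^{-1}\rho_n(v)\alpha^{-n}(v)$ between the same pair of vertices), but then its total conductance still diverges and the endpoints are $\sim$-identified, so your dichotomy $E_\infty\sqcup E_{\mathrm{fin}}$ and the identification of the limiting $E_{\mathrm{fin}}$-sum with $\Ecal^{(n)}_\infty(g_\infty,g_\infty)$ go through unchanged.
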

\begin{proof}
Let $f \in C(F_*^m)$. We need to prove that
\begin{equation*}
\Ecal^{(m)}_*(f,f) = \inf \{ \Ecal^{(n)}_*(g,g): g \in C(F_*^n) ,\ g|_{F_*^m} = f \}.
\end{equation*}
Since $\Ecal^{(n)}_\infty(g,g) = \infty$ for all $g \in C(F^n) \setminus C_*(F^n)$,
\begin{equation*}
\begin{split}
\inf &\{ \Ecal^{(n)}_*(g,g): g \in C(F_*^n) ,\ g|_{F_*^m} = f \}\\
&= \inf \{ \Ecal^{(n)}_\infty(g,g): g \in C(F^n) ,\ g|_{F^m} = f \circ p \}\\
&= \inf \{ \Ecal^{(n)}_\infty(g,g): g \in C_*(F^n) ,\ g|_{F^m} = f \circ p \}.
\end{split}
\end{equation*}
Observe that we can restrict the function $g$ to only taking values between $\max_x f(x)$ and $\min_x f(x)$ inclusive, since if we let 
$g' = (g \vee \min_x f(x)) \wedge \max_x f(x)$ then $\Ecal^{(n)}_\infty(g',g') \leq \Ecal^{(n)}_\infty(g,g)$. This makes the set over 
which we are taking the infimum compact. We can thus use Lemma \ref{topo} and the fact that $\Ecal^{(n)}_v (g,g)$ is continuous in $v$:
\begin{equation*}
\begin{split}
\inf &\{ \Ecal^{(n)}_\infty(g,g): g \in C_*(F^n) ,\ g|_{F^m} = f \circ p \}\\
&= \inf \{ \lim_{v \to \infty} \Ecal^{(n)}_v(g,g): g \in C_*(F^n) ,\ g|_{F^m} = f \circ p \}\\
&= \lim_{v \to \infty} \inf \{ \Ecal^{(n)}_v(g,g): g \in C_*(F^n) ,\ g|_{F^m} = f \circ p \}\\
&= \lim_{v \to \infty} \Ecal^{(m)}_v(f \circ p,f \circ p)\\
&= \Ecal^{(m)}_\infty(f \circ p,f \circ p)\\
&= \Ecal^{(m)}_*(f,f).
\end{split}
\end{equation*}
where between lines three and four we have used the compatibility of the sequence of Dirichlet forms $(\Ecal^{(n)}_v)_n$.
\end{proof}
The above proposition immediately implies that $(\Ecal^{(n)}_*)_{n \geq 0}$ can be taken to be a nested sequence of Dirichlet forms 
on $\bigcup_{n \geq 0} F^n_*$ (by identifying a function with its restriction to $F^n_*$). It therefore induces a well-defined resistance 
metric $d$ on $\bigcup_{n \geq 0} F_*^n$. Recall that $R_v$ is the resistance metric on $\Gcal^v$. We thus have the following result:
\begin{prop}\label{metriccvgce}
Let $n \geq 0$ and $x,y \in F^n$. Then
\begin{equation*}
\lim_{v \to \infty} R_v(x,y) = d(p(x),p(y)).
\end{equation*}
\end{prop}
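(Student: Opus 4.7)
The plan is to prove the matching inequalities $\liminf_{v\to\infty} R_v(x,y) \geq d(p(x),p(y))$ and $\limsup_{v\to\infty} R_v(x,y) \leq d(p(x),p(y))$ via a $\Gamma$-convergence-style argument for the Dirichlet forms $\Ecal^{(n)}_v$ as $v\to\infty$. By the compatibility established in Proposition~\ref{Btrace}, the value $d(p(x),p(y))^{-1}$ can be computed at level $n$ as $\inf\{\Ecal^{(n)}_\infty(g,g):g\in C_*(F^n),\ g(x)=0,\ g(y)=1\}$, since lifting an admissible function on $F^n_*$ through $p$ furnishes an admissible $g\in C_*(F^n)$. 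Standard truncation allows us to restrict throughout to $0\leq f\leq 1$. The degenerate case $p(x)=p(y)$ is handled directly: here $x\sim y$, so there is a path in $G^{v,n}$ connecting $x$ to $y$ made entirely of edges with conductance of the form $r_w^{-1}\rho_n(v)\alpha^{-n}(v)$, each tending to infinity by Proposition~\ref{CGexist} and Assumption~\ref{alphalimass}; summing resistances along this (fixed, combinatorial) path gives $R_v(x,y)\to 0 = d(p(x),p(y))$.

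For the liminf direction, assume $p(x)\neq p(y)$ and let $h^*$ minimize $\Ecal^{(n)}_*(h,h)$ on $F^n_*$ subject to $h(p(x))=0$, $h(p(y))=1$ (this exists by finite-dimensionality). Put $g^* := h^*\circ p \in C_*(F^n)$. The key observation is that for any $g\in C_*(F^n)$ one has $\Ecal^{(n)}_v(g,g)\to \Ecal^{(n)}_\infty(g,g)$ as $v\to\infty$: each summand of $\Ecal^{(n)}_v$ whose conductance diverges is indexed by a pair $(\alpha,\beta)$ with $\alpha\sim\beta$, so $(g(\alpha)-g(\beta))^2 = 0$ identically in $v$, while the remaining summands have conductances of the form $r_w^{-1}\rho_n(v)$ which converge by Proposition~\ref{CGexist}. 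Testing $\Ecal^{(n)}_v$ against $g^*$ yields $R_v(x,y)^{-1}\leq \Ecal^{(n)}_v(g^*,g^*)\to \Ecal^{(n)}_\infty(g^*,g^*) = d(p(x),p(y))^{-1}$, so $\liminf_v R_v(x,y)\geq d(p(x),p(y))$.

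The limsup inequality is the main obstacle, because the functional $(v,f)\mapsto \Ecal^{(n)}_v(f,f)$ is only lower semi-continuous (not continuous) as $v\to\infty$ without restricting to $C_*(F^n)$, so Lemma~\ref{topo} cannot be invoked directly as it was in Proposition~\ref{Btrace}. Instead, for each $v$ pick a minimizer $f_v$ of $\Ecal^{(n)}_v(f,f)$ over $\{f\in C(F^n): f(x)=0,\ f(y)=1,\ 0\leq f\leq 1\}$. Since $F^n$ is finite this admissible set is compact, so we may extract a uniformly convergent subsequence $f_{v_k}\to f^*$. The previous paragraph implies that $\Ecal^{(n)}_{v_k}(f_{v_k},f_{v_k}) = R_{v_k}(x,y)^{-1}\leq \Ecal^{(n)}_{v_k}(g^*,g^*)$ is bounded in $k$; hence for every edge $(\alpha,\beta)$ whose conductance $c_{\alpha\beta}(v_k)\to\infty$ we must have $(f_{v_k}(\alpha)-f_{v_k}(\beta))^2 = O(c_{\alpha\beta}(v_k)^{-1})\to 0$, forcing $f^*(\alpha) = f^*(\beta)$ and thus $f^*\in C_*(F^n)$. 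Passing to the limit term by term on the remaining (bounded-coefficient) summands and discarding the vanishing ones gives the lower semi-continuity bound $\Ecal^{(n)}_\infty(f^*,f^*)\leq \liminf_k \Ecal^{(n)}_{v_k}(f_{v_k},f_{v_k})$. Since $f^*$ descends through $p$ to an admissible function on $F^n_*$, we have $\Ecal^{(n)}_\infty(f^*,f^*)\geq d(p(x),p(y))^{-1}$, yielding $\liminf_v R_v(x,y)^{-1}\geq d(p(x),p(y))^{-1}$, which is the required limsup bound on $R_v(x,y)$.
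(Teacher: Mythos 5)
Your proof is correct, but it takes a genuinely different route at the decisive step. The paper disposes of the case $x\sim y$ exactly as you do, and then handles the case $p(x)\neq p(y)$ in one stroke: it writes $R_v(x,y)^{-1}$ as an infimum over the compact set of $[0,1]$-valued admissible functions in $C(F^n)$ and interchanges $\lim_{v\to\infty}$ with $\inf$ by appealing to Lemma~\ref{topo}, exactly as in Proposition~\ref{Btrace}. You instead split the identity into the two one-sided inequalities: the upper bound on $\liminf_v R_v$ comes from testing against the lifted minimiser $g^*=h^*\circ p\in C_*(F^n)$ (a recovery-sequence argument), and the lower bound on $\liminf_v R_v^{-1}$ comes from compactness of the admissible set, equicoercivity of the divergent-conductance terms forcing any subsequential limit of minimisers into $C_*(F^n)$, and joint lower semicontinuity of the energies. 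Your reason for departing from the paper is sound and worth recording: Lemma~\ref{topo} requires \emph{joint} continuity of $(v,f)\mapsto\Ecal^{(n)}_v(f,f)$ into $(-\infty,\infty]$, which holds on $(v_0,\infty]\times C_*(F^n)$ (the setting of Proposition~\ref{Btrace}, where the divergent terms are killed identically) but fails at points $(\infty,f)$ with $f\in C_*(F^n)$ when the infimum ranges over all of $C(F^n)$: approaching along $f_k\notin C_*(F^n)$ with $(f_k(z_1)-f_k(z_2))^2$ decaying more slowly than the reciprocal of the diverging conductance makes the energy blow up while the limit value is finite. The functional is only lower semicontinuous there, and your argument supplies precisely the two ingredients (pointwise convergence for fixed test functions, plus joint l.s.c.\ and compactness) that actually justify the interchange; in that sense your longer proof patches a small imprecision in the paper's one-line appeal to Lemma~\ref{topo}. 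The only blemishes are cosmetic: your vertex labels $\alpha,\beta$ collide with the paper's use of $\alpha(v)$ and $\beta$ for the renormalization data, and in the last paragraph you should say explicitly that you first pass to a sequence $v_k\to\infty$ realizing $\liminf_v R_v(x,y)^{-1}$ before extracting the uniformly convergent subsequence of minimisers.
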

\begin{proof}
If $x \sim y$, then $R_v(x,y) \to 0 = d(p(x),p(y))$ as $v \to \infty$. So assume that we do not have $x \sim y$. By Lemma~\ref{topo} 
and the fact that we can take the infimum over a compact set (as in the proof of Proposition \ref{Btrace}),
\begin{equation*}
\begin{split}
\lim_{v \to \infty} R_v(x,y)^{-1} &= \lim_{v \to \infty} \inf \{ \Ecal^{(n)}_v(f,f) : f \in C(F^n),\ f(x) = 1,\ f(y) = 0 \}\\
&= \inf \{ \lim_{v \to \infty} \Ecal^{(n)}_v(f,f) : f \in C(F^n),\ f(x) = 1,\ f(y) = 0 \}\\
&= \inf \{ \Ecal^{(n)}_\infty(f,f) : f \in C_*(F^n),\ f(x) = 1,\ f(y) = 0 \}\\
&= \inf \{ \Ecal^{(n)}_*f,f) : f \in C(F_*^n),\ f(p(x)) = 1,\ f(p(y)) = 0 \}\\
&= d(p(x),p(y))^{-1}.
\end{split}
\end{equation*}
Here again we have used the fact that $\Ecal^{(n)}_\infty(g,g) = \infty$ for all $g \in C(F^n) \setminus C_*(F^n)$.
\end{proof}
We conclude this section with a small convergence result, showing the link between the resistance metrics of the spaces $G^{v,n}$ and $H^n$.
\begin{defn}[\cite{burago2001}]\label{distortion}
Let $(M_1,d_1),(M_2,d_2)$ be metric spaces. Let $f:M_1 \to M_2$ be a surjective function. The \textit{distortion} of $f$ is
\begin{equation*}
\dis f := \sup_{x_1,x_2 \in M_1} \{ |d_1(x_1,x_2) - d_2(f(x_1),f(x_2))| \}.
\end{equation*}
\end{defn}
\begin{lem}\label{GHdis}
Let $(M_1,d_1),(M_2,d_2)$ be compact metric spaces and let $f:M_1 \to M_2$ be a surjective function. Then
\begin{equation*}
d_{GH}(M_1,M_2) \leq \frac{1}{2} \dis f.
\end{equation*}
\end{lem}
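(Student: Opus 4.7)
The plan is to use the correspondence-based characterization of the Gromov-Hausdorff distance, as developed in Burago-Burago-Ivanov. Recall that a \emph{correspondence} between $M_1$ and $M_2$ is a subset $\Rcal \subseteq M_1 \times M_2$ whose projections onto $M_1$ and $M_2$ are both surjective, and its distortion is
\[
\dis \Rcal := \sup\{ |d_1(x_1,x_2) - d_2(y_1,y_2)| : (x_1,y_1), (x_2,y_2) \in \Rcal \}.
\]
The key fact (see \cite[Theorem 7.3.25]{burago2001}) is that
\[
d_{GH}(M_1,M_2) = \tfrac{1}{2} \inf_{\Rcal} \dis \Rcal,
\]
where the infimum is taken over all correspondences between $M_1$ and $M_2$.

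Given this, the proof is immediate: I would take $\Rcal_f := \{(x,f(x)) : x \in M_1\} \subseteq M_1 \times M_2$, the graph of $f$. The projection onto $M_1$ is surjective trivially, and the projection onto $M_2$ is surjective precisely because $f$ is surjective, so $\Rcal_f$ is a valid correspondence. Unpacking the definition of distortion for this correspondence, any two pairs in $\Rcal_f$ are of the form $(x_1,f(x_1))$ and $(x_2,f(x_2))$, so
\[
\dis \Rcal_f = \sup_{x_1,x_2 \in M_1} |d_1(x_1,x_2) - d_2(f(x_1),f(x_2))| = \dis f.
\]
Hence
\[
d_{GH}(M_1,M_2) \leq \tfrac{1}{2} \dis \Rcal_f = \tfrac{1}{2} \dis f.
\]

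There is essentially no obstacle here since the correspondence characterization does all the work; one could alternatively give a direct proof by embedding both $M_1$ and $M_2$ isometrically into a common metric space via the pseudometric $\tilde d$ on $M_1 \sqcup M_2$ defined by $\tilde d(x,y) := \inf_{z \in M_1}(d_1(x,z) + \tfrac12\dis f + d_2(f(z),y))$ for $x \in M_1$, $y \in M_2$, and verifying that the Hausdorff distance between the images is at most $\tfrac12 \dis f$, but invoking the correspondence formulation is much cleaner. The only reason compactness of $M_1, M_2$ is mentioned is to ensure that $d_{GH}$ is a genuine metric on isometry classes (as noted in the paper's reference to \cite[Theorem 7.3.30]{burago2001}); the inequality itself uses no compactness beyond what is needed to have $\dis f$ finite.
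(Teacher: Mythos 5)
Your proof is correct and takes essentially the same approach as the paper: the paper's proof is simply the one-line remark that the lemma is a direct corollary of \cite[Theorem 7.3.25]{burago2001}, i.e.\ the correspondence characterization of $d_{GH}$ that you invoke. You have merely spelled out the details (taking the graph of $f$ as the correspondence) that the paper leaves implicit.
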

\begin{proof}
Direct corollary of \cite[Theorem 7.3.25]{burago2001}.
\end{proof}
\begin{cor}\label{finGH}
For each $n \geq 0$, $G^{v,n} \to H^n$ as $v \to \infty$ in the Gromov-Hausdorff metric.
\end{cor}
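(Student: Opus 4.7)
The plan is to apply Lemma~\ref{GHdis} to the projection map $p: F^n \to F^n_*$ defined in~\eqref{Pintermediate} (restricted to $F^n$), viewing it as a surjection from the finite metric space $(F^n, R_v)$ onto $(F^n_*, d)$. Since $F^n$ is finite, both spaces are compact, so the lemma will give
\[
d_{GH}(G^{v,n}, H^n) \leq \tfrac{1}{2}\, \dis p,
\]
and the task reduces to showing that the distortion of $p$ tends to $0$ as $v \to \infty$.

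To estimate the distortion, first I would unwind the definition:
\[
\dis p = \max_{x,y \in F^n} \bigl| R_v(x,y) - d(p(x), p(y)) \bigr|,
\]
where the supremum is a maximum because $F^n$ is finite. This is the key point: finiteness lets us interchange the supremum over pairs with the limit in $v$. Then, for each of the finitely many pairs $(x,y) \in F^n \times F^n$, Proposition~\ref{metriccvgce} gives
\[
\lim_{v \to \infty} R_v(x,y) = d(p(x), p(y)),
\]
so each term in the maximum tends to zero as $v \to \infty$. Taking the max over finitely many null sequences, we conclude $\dis p \to 0$.

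Combining these two ingredients yields $d_{GH}(G^{v,n}, H^n) \to 0$, which is the claim. There is no real obstacle here: the work has already been done in building up the identification $p$, in proving the trace compatibility (Proposition~\ref{Btrace}) that lets us define the metric $d$ consistently on $\bigcup_n F^n_*$, and in establishing the pointwise convergence of resistance metrics (Proposition~\ref{metriccvgce}). The corollary is simply the packaging of these facts through the generic Gromov--Hausdorff bound in Lemma~\ref{GHdis}, with finiteness of $F^n$ removing any need for a uniform-in-$(x,y)$ estimate.
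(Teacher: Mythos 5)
Your proof is correct and is essentially identical to the paper's: both apply Lemma~\ref{GHdis} to the surjection $p$ and note that the distortion is a maximum over the finitely many pairs in $F^n \times F^n$, each term vanishing as $v \to \infty$ by Proposition~\ref{metriccvgce}.
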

\begin{proof}
All of these metric spaces have a finite number of elements, so are compact. Fix $n \geq 0$. The map $p$ is surjective, and by 
Proposition~\ref{metriccvgce} its distortion from $G^{v,n}$ to $H^n$ tends to $0$ as $v \to \infty$ (as the distortion takes a supremum over a finite number 
of pairs of elements). The result follows by Lemma~\ref{GHdis}.
\end{proof}

\subsection{The limit set}

The natural thing to do now would be to take the $F_*^n$ as an approximating sequence to some generalized p.c.f.s.s.~set. We require a 
further assumption about our set-up, in order to avoid a number of pathological examples:
\begin{ass}\label{similass}
For all $n \geq 0$, $x,y \in G^{v,n}$ and $1 \leq i \leq N$, $x \sim y$ if and only if $\psi_i(x) \sim \psi_i(y)$. This property will be 
known as \textit{$\Dbb$-injectivity} of the functions $\psi_i$ with respect to the one-parameter family $\Dbb$.
\end{ass}

\begin{rem}
In fact to verify that Assumption \ref{similass} holds it is enough to verify it in the case $n = 0$. Indeed, assume that $\Dbb$-injectivity 
does not hold. Then for some $n$ and $i$ there exists $x,y \in G^{v,n}$ with $\psi_i(x) \sim \psi_i(y)$ but not $x \sim y$ (since 
$x \sim y$ always implies $\psi_i(x) \sim \psi_i(y)$). By definition, there is a path between $\psi_i(x)$ and $\psi_i(y)$ consisting only of 
edges with conductance of the form $r_w^{-1} \rho_n(v)\alpha^{-n}(v)$, but this path cannot be contained in $F_i$ (otherwise its 
preimage in $\psi_i$ is a path that implies $x \sim y$). Thus there exist $z_1,z_2 \in G^{v,0}$ such that $\psi_i(z_1)$ and $\psi_i(z_2)$ lie 
on the path, and $x \sim z_1$ and $y \sim z_2$. So $\psi_i(z_1) \sim \psi_i(z_2)$ but not $z_1 \sim z_2$ and we are reduced to the 
case $n = 0$.
\end{rem}

Henceforth we take Assumption \ref{similass} to be true. The resistance metric $d$ on $\bigcup_{n \geq 0} F_*^n$ is bounded by 
Lemma \ref{diambound} and Proposition \ref{metriccvgce}. Let $F_*$ be the completion of $\bigcup_{n \geq 0} F_*^n$ with respect 
to $d$. Then $(F_*,d)$ is a bounded complete metric space. To define contractions $\phi_i$, recall the original generalized p.c.f.s.s.~structure 
$(F,(\psi_i)_{1 \leq i \leq N})$. For $x \in \bigcup_{n \geq 0} F_*^n$ and $1 \leq i \leq N$, the natural definition of $\phi_i$ is
\begin{equation*}
\phi_i(x) = p \circ \psi_i \circ p^{-1}(x).
\end{equation*}
This is well-defined and injective because if $x,y \in \bigcup_{n \geq 0} F^n$ then $x \sim y$ if and only if $\psi_i(x) \sim \psi_i(y)$ by 
Assumption \ref{similass}. It's easy to check that
\begin{equation}\label{F*approx}
\bigcup_{i=1}^N \phi_i \left( F^n_* \right) = F_*^{n+1}
\end{equation}
for each $n \geq 0$ and thus that
\begin{equation*}
\bigcup_{i=1}^N \phi_i \left( \bigcup_{n \geq 0} F_*^n \right) = \bigcup_{n \geq 0} F_*^n.
\end{equation*}
\begin{defn}[Harmonic extension]
Let $v > v_{\min}$ and $n \geq 0$. For a function $f: F^n \to \Rbb$, its \textit{harmonic extension} to $\Gcal^v$ is the unique continuous function $g:F \to \Rbb$ for which
\begin{equation*}
\Ecal_v(g,g) = \Ecal_v^{(n)}(f,f).
\end{equation*}
This can be shown to exist by the following argument: by the same reasoning as \cite[Lemma 2.2.2]{Kigami2001}, $f$ can be extended uniquely to a function $f_1:\bigcup_{n \geq 0} F^n \to \Rbb$ for which
\begin{equation*}
\Ecal_v^{(m)}(f_1,f_1) = \Ecal_v^{(n)}(f,f)
\end{equation*}
for all $m \geq n$. Then for all $x,y \in \bigcup_{n \geq 0} F^n$ we have that
\begin{equation*}
|f_1(x) - f_1(y)|^2 \leq R_v(x,y)\Ecal_v^{(n)}(f,f),
\end{equation*}
so there is a unique continuous extension $g$ of $f_1$ to $F$.
\end{defn}
Our definition of harmonic extension given above coincides with that given in \cite[Section 3.2]{Kigami2001}. We notice by construction that if the function $f$ is constant on $\psi_w(F^0)$ for some $w \in \Wbb_n$, then its harmonic extension to $\Gcal^v$ must be constant on $F_w$ for any $v > v_{\min}$.
\begin{prop}\label{phiinject}
For each $1 \leq i \leq N$, $\phi_i$ can be extended uniquely to an injective contraction from $F_*$ to itself with Lipschitz constant at most 
$r_{\max}\rho_G^{-1}$, and $(F_*,(\phi_i)_{1 \leq i \leq N})$ is a self-similar structure.
\end{prop}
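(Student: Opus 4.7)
The proof proceeds in three stages: a Lipschitz contraction bound for $\phi_i$ on the dense set $\bigcup_n F_*^n$ giving the extension to $F_*$; a matching lower Lipschitz bound giving injectivity; and verification of the three self-similar structure axioms.

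For the upper bound I would exploit the identity $\Ecal^{(n+1)}_v = \rho_{n+1}(v) R^{n+1}(\Ecal^{(0)}_{\alpha^{-n-1}(v)})$ together with $\rho_{n+1}(v) = \rho(\alpha^{-1}(v)) \rho_n(\alpha^{-1}(v))$. Collecting the contributions in $\Ecal^{(n+1)}_v$ coming only from edges inside $F_i$ and rewriting in terms of $g = f \circ \psi_i$ gives
\begin{equation*}
\Ecal^{(n+1)}_v\big|_{F_i}(f,f) = r_i^{-1} \rho(\alpha^{-1}(v)) \Ecal^{(n)}_{\alpha^{-1}(v)}(g,g).
\end{equation*}
By Rayleigh monotonicity, cutting edges outside $F_i$ can only increase effective resistances, so $R_v(\psi_i(x),\psi_i(y)) \leq r_i \rho(\alpha^{-1}(v))^{-1} R_{\alpha^{-1}(v)}(x,y)$ for $x,y \in F^n$. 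Sending $v \to \infty$ and invoking Proposition~\ref{metriccvgce} together with $\rho(\alpha^{-1}(v)) \to \rho_G$ from Proposition~\ref{CGexist} yields
\begin{equation*}
d(\phi_i(p(x)),\phi_i(p(y))) \leq (r_i/\rho_G) d(p(x),p(y)) \leq (r_{\max}/\rho_G) d(p(x),p(y)).
\end{equation*}
Since $\rho_G > r_{\max}$ by Proposition~\ref{CGexist} this is strictly contractive, and the standard extension of Lipschitz maps from dense subsets into complete spaces produces the unique extension $\phi_i: F_* \to F_*$.

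Assumption~\ref{similass} gives injectivity of $\phi_i$ on $\bigcup_n F_*^n$, so to propagate this to $F_*$ I aim for a matching lower Lipschitz bound. Passing $v \to \infty$ in the formula for $\Ecal^{(n+1)}_v$ yields the self-similar scaling
\begin{equation*}
\Ecal^{(n+1)}_*(g,g) = \rho_G \sum_{j=1}^N r_j^{-1} \Ecal^{(n)}_*(g \circ \phi_j, g \circ \phi_j), \quad g \in C(F_*^{n+1}).
\end{equation*}
One then builds an explicit competitor $g$ in the variational characterisation of $d(\phi_i(x),\phi_i(y))^{-1}$: take $g \circ \phi_i$ to be the resistance minimiser on $F_*^n$ separating $x$ from $y$, and for $j \neq i$ let $g \circ \phi_j$ be the $\Ecal^{(n)}_*$-harmonic extension of the values forced at the finite overlap $\phi_i(F_*^n) \cap \phi_j(F_*^n)$. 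Since $|g| \leq 1$ and the generalized p.c.f.~structure provides uniform-in-$n$ control on these overlaps, the $j \neq i$ contributions total at most a constant $C$, giving $d(\phi_i(x),\phi_i(y))^{-1} \leq \rho_G r_i^{-1} d(x,y)^{-1} + C$. Rearranging against $\diam F_* < \infty$ (from Lemma~\ref{diambound} and Proposition~\ref{metriccvgce}) yields a uniform lower Lipschitz bound on the dense set, and injectivity of the extension follows.

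The self-similar structure axioms then reduce to routine checks. Compactness of $F_*$ is total boundedness: for $\epsilon > 0$, choose $k$ with $(r_{\max}/\rho_G)^k \diam F_* < \epsilon$, whereupon the $N^k$ points $\{\phi_w(x_0) : w \in \Wbb_k\}$ with a fixed $x_0 \in F_*^0$ form an $\epsilon$-net, since the iterated contractions $\phi_w$ have Lipschitz constant at most $(r_{\max}/\rho_G)^k$. Continuity and the contraction estimate are in hand, and $\bigcup_i \phi_i(F_*) = F_*$ follows because this union contains the dense set $\bigcup_n F_*^n$ (by \eqref{F*approx}) and is closed as a finite union of continuous images of the compact $F_*$. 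The main obstacle in this programme is the uniform constant $C$ in the lower Lipschitz step: controlling the harmonic extension energies across the overlaps $\phi_i(F_*^n) \cap \phi_j(F_*^n)$ independently of $n$ requires careful use of the finiteness of boundary overlaps provided by the generalized p.c.f.~hypothesis.
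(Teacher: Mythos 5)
Your upper Lipschitz bound and your treatment of the structure axioms follow essentially the paper's route. The paper likewise writes $\Ecal^{(n)}_v(f,f)=\rho(\alpha^{-1}(v))\sum_j r_j^{-1}\Ecal^{(n-1)}_{\alpha^{-1}(v)}(f\circ\psi_j,f\circ\psi_j)$ (its equation \eqref{EcalIterate} has the prefactor $\rho(\alpha^{-n}(v))$, but your version is the consistent one and both tend to $\rho_G$), drops the $j\neq i$ terms in the variational formula for $d(\phi_i(x),\phi_i(y))^{-1}$, and passes $v\to\infty$ via Proposition~\ref{metriccvgce} and Lemma~\ref{topo} to get the Lipschitz constant $r_{\max}\rho_G^{-1}<1$. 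For compactness and $F_*=\bigcup_i\phi_i(F_*)$ the paper invokes the attractor theorem \cite[Theorem 1.1.4]{Kigami2001} and identifies the attractor with $F_*$ by density, where you argue total boundedness directly; both work.

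Where you genuinely diverge is injectivity, and here your argument has a gap as written. The paper proves no lower Lipschitz bound; instead it shows that images under $\phi_i$ of disjoint $n$-complexes remain uniformly separated, by taking a $[0,1]$-valued function equal to $1$ on $\psi_{iw}(F^0)$ and $0$ on $\psi_{iu}(F^0)$, constant on $\sim$-classes, harmonically extending it in $\Gcal^v$, and bounding its energy above uniformly in $v$ via Lemma~\ref{Cbounds}; injectivity follows since distinct points of $F_*$ eventually lie in disjoint complexes. Your quantitative lower bound is a stronger statement and the strategy can be made to work — the constant $C$ does exist uniformly in $n$, since the trace onto the finite set $\phi_i(F_*^0)$ of $\Ecal^{(n+1)}_*$ restricted to the complement of $\phi_i(F_*^n)$ has conductances bounded by $\bigl(\min_{a\neq b\in F_*^1}d(a,b)\bigr)^{-1}$, which is $n$-independent by Proposition~\ref{Btrace}. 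But your competitor $g$ is not well defined: prescribing $g\circ\phi_j$ separately for each $j\neq i$ as the harmonic extension of the values forced on $\phi_i(F_*^n)\cap\phi_j(F_*^n)$ gives no boundary data at all for cells $j$ not meeting cell $i$, and gives inconsistent values at overlaps $\phi_j(F_*^n)\cap\phi_k(F_*^n)$ for $j,k\neq i$. You should instead take the single harmonic extension, with respect to $\Ecal^{(n+1)}_*$ restricted to $F_*^{n+1}$ minus the interior of $\phi_i(F_*^n)$, of the data on $\phi_i(F_*^0)$; its energy is then exactly the trace energy above. You also use the identity $\Ecal^{(n+1)}_*(g,g)=\rho_G\sum_j r_j^{-1}\Ecal^{(n)}_*(g\circ\phi_j,g\circ\phi_j)$, which the paper only establishes later (Theorem~\ref{F*Dform}); it does follow by letting $v\to\infty$ in the finite-network identity, using that $f\in C_*(F^{n+1})$ forces $f\circ\psi_j\in C_*(F^n)$, but it needs to be stated and proved at this point rather than assumed.
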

\begin{proof}
Notice (by using \eqref{Ecaln} for example) that if $f \in C(F^n)$ for $n \geq 1$ then
\begin{equation}\label{EcalIterate}
\begin{split}
\Ecal^{(n)}_v(f,f) &= \rho(\alpha^{-n}(v)) R(\Ecal^{(n-1)}_{\alpha^{-1}(v)})(f,f)\\
&= \rho(\alpha^{-n}(v))\sum_{i=1}^N r_i^{-1}\Ecal^{(n-1)}_{\alpha^{-1}(v)}(f \circ \psi_i, f \circ \psi_i).
\end{split}
\end{equation}
for $v > v_{\min}$. It follows that if $x,y \in \bigcup_{n \geq 0} F_*^n$, then there exists $n \geq 1$ such that $x,y \in F_*^{n-1}$ and 
so $\phi_i(x), \phi_i(y) \in F_*^n$. Therefore, similarly to the proof of Proposition \ref{Btrace},
\begin{equation*}
\begin{split}
&d (\phi_i(x),\phi_i(y))^{-1} = \inf \left\{ \Ecal^{(n)}_*(f,f): f \in C(F_*^n),\ f(\phi_i(x)) = 0,\ f(\phi_i(y)) = 1 \right\}\\
&= \inf \left\{ \lim_{v \to \infty} \Ecal^{(n)}_v(f,f): f \in C_*(F^n),\ f|_{p^{-1}(\phi_i(x))} = 0,\ f|_{p^{-1}(\phi_i(y))} = 1 \right\}\\
&= \inf \left\{ \lim_{v \to \infty} \Ecal^{(n)}_v(f,f): f \in C_*(F^n),\ f|_{\psi_i(p^{-1}(x))} = 0,\ f|_{\psi_i(p^{-1}(y))} = 1 \right\}\\
&\geq \rho_G r_i^{-1} \inf \left\{ \lim_{v \to \infty} \Ecal^{(n-1)}_{\alpha^{-1}(v)}(f,f): f \in C_*(F^{n-1}),\ f|_{p^{-1}(x)} = 0,\ f|_{p^{-1}(y)} = 1 \right\}\\
&= \rho_G r_i^{-1} \inf \left\{ \Ecal^{(n-1)}_*(f,f): f \in C(F_*^{n-1}),\ f(x) = 0,\ f(y) = 1 \right\}\\
&= \rho_G r_i^{-1} d(x,y)^{-1}
\end{split}
\end{equation*}
Thus $d (\phi_i(x),\phi_i(y)) \leq r_{\max}\rho_G^{-1} d(x,y)$, and $r_{\max}\rho_G^{-1} < 1$ by Proposition \ref{CGexist}. 
So the $\phi_i$ are all contractions, and so there exists a unique continuous extension of each $\phi_i$ to the whole of $F_*$, which 
is a contraction with the same constant. By \cite[Theorem 1.1.4]{Kigami2001}, there exists a unique non-empty compact subset 
$K$ of $F_*$ satisfying
\begin{equation*}
K = \bigcup_{i=1}^N \phi_i(K).
\end{equation*}
Take $x \in K$. If $y \in \bigcup_{n \geq 0} F_*^n$ then for each $n \geq 0$ there exists $w \in \Wbb_n$ such that $y \in \phi_w(F_*)$. 
Thus $d(y,\phi_w(x)) < (r_{\max}\rho_G^{-1})^n \diam(F_*)$ and of course $\phi_w(x) \in K$, so $y$ is a limit point of $K$, so $y \in K$. 
So $\bigcup_{n \geq 0} F_*^n \subseteq K$, so by denseness we must have $K = F_*$. Hence $F_*$ is compact and
\begin{equation*}
F_* = \bigcup_{i=1}^N \phi_i(F_*).
\end{equation*}
A consequence of compactness is that for any $n \geq 0$, $w \in \Wbb$, we have
\begin{equation*}
\overline{\phi_w \left(\bigcup_{n \geq 0} F_*^n \right)} = \phi_w(F_*).
\end{equation*}

We now prove that each $\phi_i$ is injective on $F_*$. Suppose $n \geq 0$ and $w,u \in \Wbb_n$ such that $\phi_w(F_*)$ and $\phi_u(F_*)$ 
are disjoint. Thus $F_w$ and $F_u$ are disjoint and in $G^{v,n}$ the two subsets $\psi_w(F^0)$ (which is equal to $F^n \cap F_w$) and $\psi_u(F^0)$ are not connected 
by any path whose resistance tends to zero as $v \to \infty$. Using Assumption \ref{similass} we can say the same thing about $\psi_{iw}(F^0)$ 
and $\psi_{iu}(F^0)$ in $G^{v,n+1}$. For each $v > v_{\min}$, take a function $f$ on $F^{n+1}$ that is bounded in $[0,1]$, takes the value 
$1$ on $\psi_{iw}(F^0)$ and the value $0$ on $\psi_{iu}(F^0)$ and $f(x) = f(y)$ if $x \sim y$. Then consider its harmonic extension to $\Gcal^v$, also called $f$. Now $f$ is bounded in $[0,1]$, takes the value $1$ on $F_{iw}$ and the value $0$ on $F_{iu}$, so for any 
$x_1 \in F_{iw}$ and $x_2 \in F_{iu}$,
\begin{equation*}
\begin{split}
R_v(x_1,x_2) &\geq \Ecal_v(f,f)^{-1} = \Ecal^{(n+1)}_v(f,f)^{-1}\\
&\geq r_{\min}^{n+1} \left( \sum_{z_1,z_2 \in F^{n+1}: z_1 \nsim z_2} \rho_{n+1}(v)(f(z_1) - f(z_2))^2 \right)^{-1}\\
&\geq K_\rho^{-1} (r_{\min}\rho_G^{-1})^{n+1} \#\{ z_1,z_2 \in F^{n+1}: z_1 \nsim z_2 \}^{-1}\\
&=: c_{n+1} > 0
\end{split}
\end{equation*}
where we have fixed some $v_0 > v_{\min}$ such that $v \geq v_0$ and used Lemma \ref{Cbounds}. Note that this bound is independent 
of $v$, $x_1$ and $x_2$, and indeed also $w$, $u$ and $i$. It follows that for $y_1 \in \phi_{iw}(\bigcup_{n \geq 0} F_*^n)$ and 
$y_2 \in \phi_{iu}(\bigcup_{n \geq 0} F_*^n)$ there exist $x_1 \in F_{iw} \cap \bigcup_{n \geq 0} F^n$ and $x_2 \in F_{iu} \cap 
\bigcup_{n \geq 0} F^n$ such that $p(x_1) = y_1$ and $p(x_2) = y_2$ and so, using Proposition \ref{metriccvgce},
\begin{equation*}
d(y_1,y_2) = \lim_{v \to \infty} R_v(x_1,x_2) \geq c_{n+1} > 0.
\end{equation*}
Taking closures the same estimate holds for $y_1 \in \phi_{iw}(F_*)$ and $y_2 \in \phi_{iu}(F_*)$. Thus $\phi_{iw}(F_*)$ and 
$\phi_{iu}(F_*)$ are disjoint. Finally, if $y_1,y_2 \in F_*$ with $y_1 \neq y_2$ then by the contractive property there exists $n \geq 0$ 
and $w,u \in \Wbb_n$ such that $y_1 \in \phi_w(F_*)$ and $y_2 \in \phi_u(F_*)$ and $\phi_w(F_*), \phi_u(F_*)$ are disjoint. Then 
$\phi_{iw}(F_*), \phi_{iu}(F_*)$ are disjoint and so $\phi_i(y_1) \neq \phi_i(y_2)$.
\end{proof}

\begin{defn}
For $n \geq 0$ and $x \in F$ let
\begin{equation*}
D_n^0(x) = \bigcup \{ F_w: w \in \Wbb_n,\ F_w \ni x \},
\end{equation*}
\begin{equation*}
D_n^1(x) := \bigcup \{ F_w: w \in \Wbb_n,\ F_w \cap D_n^0(x) \neq \emptyset \}
\end{equation*}
be \textit{$n$-neighbourhoods} of $x$. These are each a sequence of decreasing subsets of $F$ (as $n$ increases). Let 
\begin{equation*}
\partial D_n^0(x) = (D_n^0(x) \cap F^n) \setminus \{ x \},
\end{equation*}
which is the boundary of $D_n^0(x)$ in the sense that any continuous path from an element of $D_n^0(x)$ to an element of $F \setminus D_n^0(x)$ must hit some element of $\partial D_n^0(x)$.

Similarly for $y \in F_*$ let
\begin{equation*}
D_n^0(y) = \bigcup \{ \phi_w(F_*): w \in \Wbb_n,\ \phi_w(F_*) \ni y \},
\end{equation*}
\begin{equation*}
D_n^1(y) := \bigcup \{ \phi_w(F_*): w \in \Wbb_n,\ \phi_w(F_*) \cap D_n^0(y) \neq \emptyset \}.
\end{equation*}
\end{defn}

\begin{prop}\label{pcts}
The projection map $p$ can be uniquely extended to a surjective function $p: \Gcal^v \to (F_*,d)$ (independently of $v$) which is continuous 
for all $v > v_{\min}$.
\end{prop}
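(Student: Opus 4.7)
The plan is to show that the projection map $p: \bigcup_{n \geq 0} F^n \to \bigcup_{n \geq 0} F_*^n$ from \eqref{Pintermediate} is uniformly continuous when the domain carries $R_v$ and the codomain carries $d$, and then to extend by standard completeness arguments.

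The key inputs are two geometric cell-diameter estimates. Iterating the identity \eqref{EcalIterate}, the restriction of $\Ecal_v$ to functions on $F_w$ with $w \in \Wbb_n$ is, up to the scaling factor $\rho_n(v) r_w^{-1}$, a copy of $\Ecal_{\alpha^{-n}(v)}$ on $F$. Combining this with the monotonicity of effective resistance under restriction of the network, Lemma \ref{diambound}, and the lower bound $\rho_n(v) \geq k_\rho \rho_G^n$ from Lemma \ref{Cbounds}, one obtains
$$\diam(F_w, R_v) \leq \frac{r_w}{\rho_n(v)} \diam(F, R_{\alpha^{-n}(v)}) \leq C(v) \left(\frac{r_{\max}}{\rho_G}\right)^n$$
uniformly in $w \in \Wbb_n$. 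In parallel, iterating the contraction estimate of Proposition \ref{phiinject} yields $\diam(\phi_w(F_*), d) \leq (r_{\max}/\rho_G)^n \diam(F_*, d)$. Since $\rho_G > r_{\max}$ by Proposition \ref{CGexist}, both families of cell diameters shrink geometrically in $n$.

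To exploit these, I would show a cell-separation fact: for fixed $n$, the finitely many cells $F_w$ with $w \in \Wbb_n$ are compact subsets of $(F, R_v)$, so any two of them that are disjoint have strictly positive $R_v$-distance. Taking the minimum of these distances over the finite collection of disjoint pairs produces $\delta_n > 0$ with the property that $R_v(x, y) < \delta_n$ forces $x$ and $y$ to lie in cells $F_w$, $F_{w'}$ at level $n$ with $F_w \cap F_{w'} \neq \emptyset$, i.e.\ $y \in D_n^1(x)$. For $x, y \in \bigcup_m F^m$ this implies $p(y) \in D_n^1(p(x))$, and since any two points of $D_n^1(p(x))$ are joined by passing through at most two level-$n$ cells of $F_*$, we obtain
$$d(p(x), p(y)) \leq 2 (r_{\max}/\rho_G)^n \diam(F_*, d),$$
which can be made arbitrarily small by choosing $n$ large first and then $\delta = \delta_n$. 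This is uniform continuity.

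The remainder is routine. The space $(F, R_v)$ is compact by Theorem \ref{nfpDform} and $(F_*, d)$ is complete by construction, so $p$ extends uniquely to a continuous map $\bar p: F \to F_*$; its image is compact, hence closed in $F_*$, and contains the dense set $\bigcup_n F^n_*$, giving surjectivity. For independence of $v$: by Theorem \ref{nfpDform} every $R_v$ with $v > v_{\min}$ induces the original topology on $F$, so a sequence in $\bigcup_m F^m$ converges to a given $x \in F$ under one choice of $v$ exactly when it does so for every other choice; the extension value $\bar p(x)$ is computed as the $d$-limit of $p(x_k)$ along such a sequence and therefore does not depend on $v$. The main obstacle is the cell-diameter bound in $(F, R_v)$: it requires converting the non-trivial scaling behaviour of $\rho_n$ and $\alpha^{-n}$ established in Lemmas \ref{alphabounds} and \ref{Cbounds} into a clean geometric decay independent of $w \in \Wbb_n$.
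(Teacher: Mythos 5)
Your proof is correct and follows the same overall strategy as the paper's: establish uniform continuity of the map in \eqref{Pintermediate} from $\bigl(\bigcup_n F^n, R_v\bigr)$ to $\bigl(\bigcup_n F_*^n, d\bigr)$ by combining geometric decay of cell diameters on both sides (the paper's Lemmas \ref{diamFw} and \ref{diamH}) with a positive lower bound on the $R_v$-distance between disjoint $n$-cells, then extend by completeness of the target and deduce surjectivity and independence of $v$ from the topological equivalence in Theorem \ref{nfpDform}. You diverge in two local sub-arguments, both harmlessly. First, you obtain the cell-separation constant $\delta_n$ by a soft compactness argument (disjoint compact cells in a metric space are at positive distance), which a priori depends on $v$; the paper instead reuses the quantitative harmonic-extension estimate from the proof of Proposition \ref{phiinject} to get a constant $c_n$ uniform over $v \geq v_0$. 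For the fixed-$v$ continuity asserted here your softer constant suffices, though the $v$-uniform version is what the paper recycles later (e.g.\ in Proposition \ref{closure}). Second, your surjectivity argument --- the continuous image of the compact space $(F,R_v)$ is closed in $F_*$ and contains the dense set $\bigcup_n F_*^n$ --- is cleaner than the paper's, which intersects a nested decreasing sequence of closed non-empty preimages $p^{-1}(\phi_{w_n}(F_*))$; both are valid.
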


\begin{proof}
Fix $v > v_{\min}$. Let $\epsilon > 0$. Then there exists $n \geq 0$ such that
\begin{equation*}
2(r_{\max}\rho_G^{-1})^n \diam(F_*) < \epsilon.
\end{equation*}
It follows that if $y \in F_*$ then $D_n^1(y) \subseteq B(y,\epsilon)$, the $\epsilon$-ball about $y$.
Using a method identical to that used in the proof of the previous proposition, we find that there is a constant $c_n > 0$ such that if 
$w,u \in \Wbb_n$ are such that $F_w, F_u$ are disjoint, then for all $z_1 \in F_w$ and $z_2 \in F_u$, $R_v(z_1,z_2) \geq c_n$. It follows that if $x \in F$ then
\begin{equation*}
B(x,c_n) \subseteq D^1_n(x).
\end{equation*}
Let $x \in \bigcup_{n \geq 0} F^n$ and $y = p(x) \in \bigcup_{n \geq 0} F_*^n$. We see that $p(D^1_n(x)) \subseteq D_n^1(y)$, so we may conclude that
\begin{equation*}
p(B(x,c_n)) \subseteq B(y,\epsilon).
\end{equation*}
The choice of $c_n$ did not depend on $x$, so $p$ is uniformly continuous on $x \in \bigcup_{n \geq 0} F^n$. It is a uniformly 
continuous function defined on a dense subset of a metric space $F$ and taking values in a complete space $F_*$, and so has a unique 
continuous extension to the whole of $F$. Now let $v_1, v_2 > v_{\min}$ and let $p_1,p_2 : F \to F_*$ be the functions generated by 
the above method by using $v = v_1, v_2$ respectively. By Theorem \ref{nfpDform} $\Gcal^{v_1}$ and $\Gcal^{v_2}$ have the same 
topology and so $p_1$ and $p_2$ are continuous functions on $\Gcal^{v_1}$ that agree on the dense subset $\bigcup_{n \geq 0} F^n$, 
and so must agree on the whole of $F$. So $p$ is independent of $v$.

It remains to prove the surjectivity of the extended $p$, so let $y \in F_*$. There exists a decreasing sequence of non-empty sets 
$(\phi_{w_n}(F_*))_n$ such that $w_n \in \Wbb_n$ and $y \in \phi_{w_n}(F_*)$ for all $n$. Each $\phi_{w_n}(F_*)$ is compact, hence closed, and
\begin{equation*}
\diam(\phi_{w_n}(F_*)) \to 0
\end{equation*}
by the contractivity of the $\phi_i$. Thus it must be the case that
\begin{equation*}
\{ y \} = \bigcap_{n=0}^\infty \phi_{w_n}(F_*),
\end{equation*}
and so
\begin{equation*}
p^{-1}(\{ y \}) = \bigcap_{n=0}^\infty p^{-1}(\phi_{w_n}(F_*))
\end{equation*}
where we note that the right-hand side is again a decreasing sequence of closed and non-empty sets. So $p^{-1}(\{ y \})$ must be non-empty.
\end{proof}

\begin{rem}
We may now naturally extend the equivalence relation $\sim$ to be defined on the whole of $F$. For $x,y \in F$, we define $x \sim y$ if and 
only if $p(x) = p(y)$.
\end{rem}

\begin{cor}\label{phip}
For all $1 \leq i \leq N$, $\phi_i \circ p = p \circ \psi_i$ on $F$. In particular, for any $w \in \Wbb_n$,
\begin{equation*}
p(F_w) = \phi_w(F_*).
\end{equation*}
\end{cor}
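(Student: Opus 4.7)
The plan is to establish the identity on the dense skeleton $\bigcup_{n \geq 0} F^n$ directly from the definition of $\phi_i$, and then extend to all of $F$ by a standard continuity-and-density argument. The second assertion is then an easy corollary via iteration and the surjectivity of $p$.

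First, I would verify that $\phi_i \circ p = p \circ \psi_i$ holds pointwise on $\bigcup_{n \geq 0} F^n$. For $x$ in this set, we have $p(x) \in \bigcup_{n \geq 0} F_*^n$, so the original definition $\phi_i(y) := p \circ \psi_i \circ p^{-1}(y)$ applies; the formula gives $\phi_i(p(x)) = p(\psi_i(x'))$ for any $x' \in p^{-1}(p(x))$. That this is independent of the choice of $x'$ (and hence agrees with $p(\psi_i(x))$) is exactly the content of Assumption~\ref{similass}: if $x \sim x'$ then $\psi_i(x) \sim \psi_i(x')$, so $p(\psi_i(x)) = p(\psi_i(x'))$. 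So the identity holds on $\bigcup_{n \geq 0} F^n$.

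Next I would extend to all of $F$. By Proposition~\ref{pcts}, $p: F \to F_*$ is continuous; by Proposition~\ref{phiinject}, the extended $\phi_i: F_* \to F_*$ is a contraction, hence continuous; and $\psi_i: F \to F$ is continuous by the definition of a self-similar structure. Therefore both $\phi_i \circ p$ and $p \circ \psi_i$ are continuous maps from $F$ into the (Hausdorff) metric space $F_*$. They agree on the dense subset $\bigcup_{n \geq 0} F^n \subset F$ (using \eqref{Vn}), so they agree on all of $F$.

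For the ``in particular'' claim, I would iterate: for any word $w = w_1 \cdots w_n \in \Wbb_n$, a straightforward induction on $n$ using $\phi_{w_j} \circ p = p \circ \psi_{w_j}$ yields
\begin{equation*}
p \circ \psi_w = \phi_w \circ p \text{ on } F.
\end{equation*}
Applying both sides to $F$ and using surjectivity of $p: F \to F_*$ (established in Proposition~\ref{pcts}),
\begin{equation*}
p(F_w) = p(\psi_w(F)) = \phi_w(p(F)) = \phi_w(F_*),
\end{equation*}
which completes the proof. There is no real obstacle here; the only subtlety is the well-definedness on $\bigcup_{n \geq 0} F^n$, which was already secured by Assumption~\ref{similass}, so the statement is essentially a formal consequence of the construction together with the continuity properties already established.
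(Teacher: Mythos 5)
Your proposal is correct and follows essentially the same route as the paper, whose proof is simply the one-line observation that $\phi_i \circ p$ and $p \circ \psi_i$ are continuous and agree on a dense subset of $F$; you have merely spelled out the supporting facts (well-definedness via Assumption~\ref{similass}, continuity via Propositions~\ref{pcts} and~\ref{phiinject}) and the easy iteration-plus-surjectivity step for the ``in particular'' claim. No issues.
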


\begin{proof}
The maps $\phi_i \circ p$ and $p \circ \psi_i$ are continuous and agree on a dense subset of $F$.
\end{proof}

\subsubsection{Structural regularity of the projection map}

We seek to prove that $\Scal_* = (F_*, (\phi_i)_{1 \leq i \leq N})$ is a generalized p.c.f.s.s.~set and that its sequence of approximating networks (as in \eqref{Vn}) 
is $(F_*^n)_{n \geq 0}$. To do this we need a few more regularity results.
\begin{lem}\label{diamFw}
Let $n \geq 0$ and $w \in \Wbb_n$. Let $v_0 > v_{\min}$. If $v \geq v_0$ then taking $F_w$ as a subset of $\Gcal^v$ we have
\begin{equation*}
\diam (F_w,R_v) \leq r_{\max}^n\rho_n(v)^{-1} \sup_{v' \geq v_0} \diam \Gcal^{v'}.
\end{equation*}
\end{lem}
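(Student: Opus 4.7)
The plan is to exhibit each $n$-complex $F_w \subseteq \Gcal^v$ as an appropriately rescaled copy of $\Gcal^{\alpha^{-n}(v)}$, and then transfer the diameter bound. More precisely, I would prove that for every $x', y' \in F$,
\[ R_v(\psi_w(x'), \psi_w(y')) \leq r_w \rho_n(v)^{-1} R_{\alpha^{-n}(v)}(x', y'). \]
The conclusion follows by taking the supremum over $x', y' \in F$, bounding $r_w \leq r_{\max}^n$, and noting that since $\alpha^{-1}$ strictly exceeds the identity on $(v_{\min},\infty)$ we have $\alpha^{-n}(v) \geq v \geq v_0$, so $\diam \Gcal^{\alpha^{-n}(v)} \leq \sup_{v' \geq v_0} \diam \Gcal^{v'}$.

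To obtain the key inequality I would first establish a finite-level scaling identity: for $m \geq n$ and $f \in C(F^m)$,
\[ \Ecal^{(m)}_v(f,f) = \rho_n(v) \sum_{w' \in \Wbb_n} r_{w'}^{-1} \Ecal^{(m-n)}_{\alpha^{-n}(v)}(f \circ \psi_{w'}, f \circ \psi_{w'}). \]
This comes directly from $\Ecal^{(m)}_v = \rho_m(v) R^m(\Ecal^{(0)}_{\alpha^{-m}(v)})$ by factoring each word $w' \in \Wbb_m$ as a concatenation of an element of $\Wbb_n$ with an element of $\Wbb_{m-n}$, together with the factorisation $\rho_m(v) = \rho_n(v)\rho_{m-n}(\alpha^{-n}(v))$, which is immediate from the definition. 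Retaining only the $w' = w$ term on the right-hand side and passing to the supremum in $m$ yields
\[ \Ecal_v(f,f) \geq \rho_n(v) r_w^{-1} \Ecal_{\alpha^{-n}(v)}(f \circ \psi_w, f \circ \psi_w), \qquad f \in D_v, \]
where both sides are monotone limits of their finite-level counterparts and the limit transfers without issue.

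The resistance estimate then follows from the standard variational identity $R_\Ecal(x,y) = \sup_f (f(x)-f(y))^2/\Ecal(f,f)$ applied twice. For any $f \in D_v$ with $\Ecal_v(f,f) > 0$, writing $x = \psi_w(x')$, $y = \psi_w(y')$,
\[ (f(x)-f(y))^2 = ((f\circ\psi_w)(x') - (f\circ\psi_w)(y'))^2 \leq R_{\alpha^{-n}(v)}(x',y')\, \Ecal_{\alpha^{-n}(v)}(f\circ\psi_w, f\circ\psi_w) \]
and the previous display bounds the second factor by $r_w \rho_n(v)^{-1} \Ecal_v(f,f)$. Dividing by $\Ecal_v(f,f)$ and taking the supremum over $f$ gives the desired bound. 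There is no serious obstacle in this argument; it is essentially bookkeeping with the self-similar scaling, and the only point requiring a moment's care is the transfer of the energy inequality from finite level to the limit, which is immediate from the monotone definition of $\Ecal_v$ and $\Ecal_{\alpha^{-n}(v)}$.
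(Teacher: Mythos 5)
Your proof is correct and follows essentially the same route as the paper: the paper simply asserts that, by the replication structure, $\psi_w\colon \Gcal^{\alpha^{-n}(v)} \to (F_w,R_v)$ is a bijective contraction with Lipschitz constant at most $r_{\max}^n\rho_n(v)^{-1}$, and your energy-scaling identity together with the variational characterisation of the resistance metric is precisely the detailed verification of that assertion. The supporting facts you use (the factorisation $\rho_m(v)=\rho_n(v)\rho_{m-n}(\alpha^{-n}(v))$, the monotone passage to the limit form, and $\alpha^{-n}(v)\geq v\geq v_0$) all check out.
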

\begin{proof}
Recall from Lemma~\ref{diambound} that $\sup_{v \geq v_0} \diam \Gcal^v < \infty$. By the construction of the $\Gcal^v$ through 
the replication map, the function
\begin{equation*}
\psi_w: \Gcal^{\alpha^{-n}(v)} \to (F_w,R_v)
\end{equation*}
is a bijective contraction with Lipschitz constant at most $r_{\max}^n\rho_n(v)^{-1}$, which is strictly less than $1$ by 
Assumption~\ref{1paramass}. Thus
\begin{equation*}
\diam (F_w,R_v) \leq r_{\max}^n \rho_n(v)^{-1}\diam \Gcal^{\alpha^{-n}(v)} \leq r_{\max}^n \rho_n(v)^{-1} 
\sup_{v' \geq v_0} \diam \Gcal^{v'}.
\end{equation*}
\end{proof}
\begin{defn}
For $y \in F_*$ define $\Ccal(y)$ to be the closure of the set
\begin{equation*}
\left\{ z \in \bigcup_{n \geq 0} F^n: p(z) = y \right\}.
\end{equation*}
\end{defn}
Note that by Theorem \ref{nfpDform} all of the resistance metrics $R_v$ induce the same topology on $F$, so there is no confusion when talking about closures of subsets of $F$. Clearly if $x \in \Ccal(y)$ then $p(x) = y$, by continuity of $p$. The purpose of the next result is to prove a converse to this statement.
\begin{prop}\label{closure}
Let $y \in F_*$. Suppose there exists $z \in \bigcup_{n \geq 0} F^n$ such that $p(z) = y$. Then for all $x \in F$, if $p(x) = y$ then $x \in \Ccal(y)$.
\end{prop}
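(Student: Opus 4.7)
Assume $x \notin \bigcup_l F^l$ (otherwise the conclusion is immediate). Fix some $v > v_{\min}$, and for each $m \geq 0$ choose $w_m \in \Wbb_m$ with $x \in F_{w_m}$. Combining Lemma~\ref{diamFw} with the bound $\rho_m(v)^{-1} \leq k_\rho^{-1}\rho_G^{-m}$ from Lemma~\ref{Cbounds} and with $r_{\max} < \rho_G$ from Proposition~\ref{CGexist}, we obtain $\diam(F_{w_m},R_v) \to 0$ as $m \to \infty$. It therefore suffices to produce, for each sufficiently large $m$, a point $x_m \in F_{w_m} \cap \bigcup_l F^l$ with $p(x_m) = y$; convergence $x_m \to x$ in $F$ is then automatic, exhibiting $x \in \Ccal(y)$.

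To produce such an $x_m$, observe that Corollary~\ref{phip} gives $y = p(x) \in p(F_{w_m}) = \phi_{w_m}(F_*)$, and the injectivity of $\phi_{w_m}$ on $F_*$ (Proposition~\ref{phiinject}) lets us define $y_m := \phi_{w_m}^{-1}(y) \in F_*$. If $y_m = p(u_m)$ for some $u_m \in \bigcup_l F^l$, then $x_m := \psi_{w_m}(u_m)$ lies in $F_{w_m} \cap \bigcup_l F^l$ and, again by Corollary~\ref{phip}, satisfies $p(x_m) = \phi_{w_m}(p(u_m)) = \phi_{w_m}(y_m) = y$. The whole proof thus reduces to showing $y_m \in \bigcup_l F^l_*$ for every sufficiently large $m$.

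For this key step, use the hypothesis $y = p(z)$ with $z \in F^n$, giving $y \in F^n_*$. For $m \geq n$ we have $F^n_* \subseteq F^m_* = \bigcup_{u \in \Wbb_m} \phi_u(F^0_*)$, so $y \in \phi_u(F^0_*)$ for some $u \in \Wbb_m$. When $u = w_m$ this directly yields $y_m \in F^0_*$, and any $u_m \in F^0$ with $p(u_m) = y_m$ completes the construction. When $u \neq w_m$, the point $y$ lies in the overlap $\phi_u(F^0_*) \cap \phi_{w_m}(F_*)$; the plan is to pull this overlap back through $\psi_{w_m}^{-1}$, using the fact that $\psi_{w_m}$ respects the equivalence relation $\sim$ by Assumption~\ref{similass} (i.e.\ $\Dbb$-injectivity), to deduce that $y_m$ itself lies in some $\phi_{u'}(F^0_*) \subseteq \bigcup_l F^l_*$.

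I expect this overlap analysis to be the main obstacle. In a strictly p.c.f.\ set the critical set is finite and the argument would be immediate, but the generalized setting allows $P(\Scal)$ to be infinite, so one cannot enumerate overlap points directly. The resolution is to exploit $\Dbb$-injectivity: since $y = p(z) = p(z'')$ for some (possibly non-rational) $z'' \in F_{w_m}$, Assumption~\ref{similass} applied to $\psi_{w_m}$ together with the fact that the equivalence class of $z$ restricted to $\bigcup_l F^l$ is fully captured by the original $\sim$ on $\bigcup_l F^l$ produces a rational representative of $y_m$ inside $\psi_{w_m}^{-1}(F^k)$ for some $k$, closing the reduction.
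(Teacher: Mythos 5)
There is a genuine gap. The first half of your argument is a correct and clean reduction: the cell diameters $\diam(F_{w_m},R_v)\leq r_{\max}^m\rho_m(v)^{-1}\sup_{v'}\diam\Gcal^{v'}\to 0$ do follow from Lemma~\ref{diamFw}, Lemma~\ref{Cbounds} and Proposition~\ref{CGexist}, and producing $x_m\in F_{w_m}\cap\bigcup_l F^l$ with $p(x_m)=y$ would indeed finish the proof. The problem is that the statement you have reduced to --- that $y$ admits a rational representative \emph{inside the cell} $F_{w_m}$ containing $x$ --- is essentially equivalent to the proposition itself (it is precisely Corollary~\ref{closure2}, which the paper derives \emph{from} Proposition~\ref{closure}), so the reduction makes no real progress. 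Your proposed way of closing it is circular in the paper's logical order: the overlap containment $\phi_u(F_*)\cap\phi_{w_m}(F_*)\subseteq\phi_{w_m}(F^0_*)$ that you would need when $u\neq w_m$ only follows once one knows $\pi(P(\Scal_*))=F^0_*$, which is Theorem~\ref{S*} and is proved later using Corollary~\ref{closure2}. Without it, the general theory of self-similar structures only gives $y_m\in\pi(P(\Scal_*))$, and nothing yet rules out that this set contains points with no representative in $\bigcup_l F^l$. Nor does $\Dbb$-injectivity help directly: Assumption~\ref{similass} is a statement about points of the approximating networks $\bigcup_l F^l$, whereas the whole difficulty is that $p$ is defined on $F\setminus\bigcup_l F^l$ only by continuous extension, so a priori $p(x)=y$ could hold without any nearby rational witnesses --- ruling this out is the entire content of the proposition, and your final paragraph asserts rather than proves it.

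The paper's proof supplies the missing ingredient by an analytic separation argument run in the contrapositive. Assuming $x\notin\Ccal(y)$, one chooses $n$ so that $D^0_n(x)$ lies in a small ball around $x$ disjoint from $\Ccal(y)$ and so that $z\in F^n$, and considers the indicator $f=\1bb_{\Ccal(y)\cap F^n}$ on $F^n$. Because $\Ccal(y)\cap F^n$ is a union of $\sim$-classes, $f$ only varies across edges of conductance $O(\rho_n(v))$, so $\sup_{v\geq v_0}\Ecal^{(n)}_v(f,f)<\infty$; its harmonic extension equals $1$ at $z$ and $0$ on $D^0_n(x)$, yielding a lower bound $R_v(x',z)\geq c>0$ uniform in $v$ and in $x'\in D^0_n(x)$, whence $d(p(x),y)\geq c$ by Proposition~\ref{metriccvgce} and continuity of $p$. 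Some such uniform-in-$v$ resistance estimate (or an equivalent potential-theoretic input) is what your purely combinatorial approach is missing; without it the ``overlap analysis'' you flag as the main obstacle cannot be completed.
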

\begin{proof}
Suppose $x \in F$ such that $x \notin \Ccal(y)$. We aim to show that $p(x) \neq y$. If $x \in \bigcup_{n \geq 0} F^n$ then we may immediately conclude that $p(x) \neq y$, so we may assume that $x \notin \bigcup_{n \geq 0} F^n$. Fix a $v > v_{\min}$.

We have that $z \in \Ccal(y)$. The set $\Ccal(y)$ is a non-empty and closed subset of $F$, thus is compact, so $\epsilon := \inf_{x' \in \Ccal(y)}R_v(x, x') > 0$ is well-defined. Then by Lemma \ref{diamFw} there exists $n \geq 0$ such that $D^0_n(x) \subseteq B_v(x,\frac{\epsilon}{2})$, the open $R_v$-ball in $F$ with centre $x$ and radius $\frac{\epsilon}{2}$, and also such that $z \in F^n$. In particular,
\begin{equation}\label{nhoodnonint}
\left( D^0_n(x) \cap \bigcup_{m \geq 0} F^m \right) \cap \Ccal(y) = \emptyset,
\end{equation}
and notice that this is now independent of $v$. Now consider the indicator function of $\Ccal(y) \cap F^n$ as a function from $F^n$ to $\Rbb$, written as $\1bb_{\Ccal(y) \cap F^n}$. For $x_1,x_2 \in G^{v,n}$, if $\1bb_{\Ccal(y) \cap F^n}(x_1) \neq \1bb_{\Ccal(y) \cap F^n}(x_2)$ then the edge between $x_1$ and $x_2$ must either have zero conductance or have conductance of the form $r_w^{-1} \rho_n(v)$, by definition of $\Ccal(y)$. Then by Proposition \ref{CGexist}, for each $v_0 > v_{\min}$,
\begin{equation*}
\sup_{v \geq v_0}\Ecal^{(n)}_v(\1bb_{\Ccal(y) \cap F^n},\1bb_{\Ccal(y) \cap F^n}) < \infty.
\end{equation*}
We see by \eqref{nhoodnonint} that $\1bb_{\Ccal(y) \cap F^n}$ vanishes on $D^0_n(x) \cap F^n$. Therefore for any $v > v_{\min}$, the harmonic extension of $\1bb_{\Ccal(y) \cap F^n}$ to $\Gcal^v$ takes the value $1$ at $z$ and must take the value $0$ in all of $D^0_n(x)$, so it follows that for each $v_0 > v_{\min}$ there exists a constant $c > 0$ such that for all $x' \in D^0_n(x)$ and $v \geq v_0$,
\begin{equation*}
R_v(x',z) \geq c.
\end{equation*}
Now let $(x_i)_i$ be a sequence in $D^0_n(x) \cap \bigcup_{n \geq 0} F^n$ converging to $x$. For each $x_i$ we have $d(p(x_i),y) \geq c$ by Proposition \ref{metriccvgce}. Thus by continuity of $p$, $d(p(x),y) \geq c$. So $p(x) \neq y$.
\end{proof}
\begin{cor}\label{closure2}
Let $n \geq 0$, $w \in \Wbb_n$ and $x \in F_w$. Suppose there exists $z \in \bigcup_{n \geq 0} F^n$ such that $z \sim x$. Then there exists $z' \in F_w \cap \bigcup_{n \geq 0} F^n$ such that $z' \sim x$.
\end{cor}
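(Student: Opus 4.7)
The strategy is to reduce to the ``generic'' case where $x$ lies in a unique $n$-complex, and then approximate $x$ from within that $n$-complex using Proposition \ref{closure}. First I would handle the trivial case $x \in \bigcup_{m \geq 0} F^m$: here we may simply take $z' = x$, since by hypothesis $x \in F_w$. Henceforth assume $x \notin \bigcup_{m \geq 0} F^m$. Setting $y = p(x) = p(z)$, the existence of $z \in \bigcup_{m \geq 0} F^m$ with $p(z) = y$ lets us apply Proposition \ref{closure} to conclude that $x \in \Ccal(y)$. So we obtain a sequence $(z_k)_k$ in $\bigcup_{m \geq 0} F^m$ with $p(z_k) = y$ for every $k$ and $z_k \to x$ in $F$ (in the common topology induced by the $R_v$).

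The key geometric input I need is the following generalized p.c.f.s.s.\ analogue of the standard fact that distinct $n$-complexes intersect only in $F^n$: for $n \geq 1$ and $w \neq w'$ in $\Wbb_n$, $F_w \cap F_{w'} \subseteq F^n$. To see this, if $x \in F_w \cap F_{w'}$, pick $u, u' \in \Wbb$ with $\pi(u) = \pi(u') = x$, $u|n = w$ and $u'|n = w'$. Let $k \leq n$ be minimal with $u_k \neq u'_k$, and write $w_0 = u|(k-1)$. Injectivity of $\psi_{w_0}$ forces $\pi(\sigma^{k-1}u) = \pi(\sigma^{k-1}u')$; as these two images lie respectively in $F_{u_k}$ and $F_{u'_k}$ with $u_k \neq u'_k$, this common point lies in $B(\Scal)$, hence $\sigma^{k-1}u \in \Gamma(\Scal)$. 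Therefore $\sigma^n u = \sigma^{n-k+1}(\sigma^{k-1}u) \in \sigma^{n-k+1}\Gamma(\Scal) \subseteq P(\Scal)$, so $u \in P^{(n)}$ and $x \in F^n$.

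With this in hand, the assumption $x \notin \bigcup_m F^m$ forces $x \notin F_{w'}$ for every $w' \in \Wbb_n \setminus \{w\}$. Since each $F_{w'} = \psi_{w'}(F)$ is compact, hence closed, the set
\[
U := F \setminus \bigcup_{w' \in \Wbb_n,\ w' \neq w} F_{w'}
\]
is an open neighbourhood of $x$ in $F$. Consequently $z_k \in U$ eventually, and as $F = \bigcup_{w' \in \Wbb_n} F_{w'}$ this forces $z_k \in F_w$ for all large $k$; such a $z_k$ then lies in $F_w \cap \bigcup_{m \geq 0} F^m$ and satisfies $z_k \sim x$.

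The main obstacle I expect is the intersection statement $F_w \cap F_{w'} \subseteq F^n$: it is classical for strict p.c.f.s.s.\ sets, but in the generalized setting the post-critical set need not be finite, so one has to verify it directly from the definitions of $\Gamma(\Scal)$ and $P^{(n)}$ using only the injectivity of each $\psi_i$. Once that lemma is secured, the remainder is a short topological approximation argument.
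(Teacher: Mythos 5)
Your proof is correct and follows essentially the same route as the paper's: both apply Proposition~\ref{closure} to approximate $x$ by $\sim$-equivalent points of $\bigcup_{m\geq 0} F^m$ and then use the fact that distinct $n$-complexes meet only inside $F^n$ to force those approximants into $F_w$. The only difference is that the paper invokes Kigami's Proposition~1.3.5(2) for the intersection fact $F_w \cap F_{w'} \subseteq F^n$, whereas you verify it directly from the definitions of $\Gamma(\Scal)$ and $P^{(n)}$ --- a reasonable extra precaution in the generalized p.c.f.\ setting, but not a change of approach.
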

\begin{proof}
By Proposition \ref{closure} there exists a sequence of $x_i \in \bigcup_{n \geq 0} F^n$ such that $x_i \sim x$ for all $i$ and $x_i \to x$ as $i \to \infty$. Suppose that the conclusion of the present result does not hold, then the sequence $(x_i)$ must accumulate (in a subsequence) in $F_v$ for some $v \in \Wbb_n$, $v \neq w$. This implies that $x \in F_w \cap F_v = \psi_w(F^0) \cap \psi_v(F^0)$ by \cite[Proposition 1.3.5(2)]{Kigami2001}, so $x \in F^n$ and we may simply take $z' = x$, which is a contradiction.
\end{proof}
Now we provide a partial extension of Proposition \ref{metriccvgce} to points in the space $F$.
\begin{lem}\label{metriccvgext}
Let $x,y \in F$. Then
\begin{equation*}
\limsup_{v \to \infty} R_v(x,y) \leq d(p(x),p(y)).
\end{equation*}
\end{lem}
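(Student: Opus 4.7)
The plan is to approximate $x$ and $y$ by points in $\bigcup_{n \geq 0} F^n$ (where Proposition \ref{metriccvgce} directly applies) and use the triangle inequality, with the critical requirement that the approximation error be controlled uniformly in $v$. For each $m \geq 0$, I choose words $w_m^x, w_m^y \in \Wbb_m$ with $x \in F_{w_m^x}$ and $y \in F_{w_m^y}$, and select vertices $x_m \in F^m \cap F_{w_m^x}$ and $y_m \in F^m \cap F_{w_m^y}$. For any $v \geq v_0 > v_{\min}$, the triangle inequality gives
\[ R_v(x, y) \leq R_v(x, x_m) + R_v(x_m, y_m) + R_v(y_m, y). \]

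The key technical step is bounding the endpoint errors uniformly in $v$. Lemma \ref{diamFw} yields $R_v(x, x_m) \leq \diam(F_{w_m^x}, R_v) \leq C r_{\max}^m \rho_m(v)^{-1}$, where $C := \sup_{v' \geq v_0} \diam \Gcal^{v'}$ is finite by Lemma \ref{diambound}. Combining this with the uniform lower bound $\rho_m(v) \geq k_\rho \rho_G^m$ obtained from Lemma \ref{Cbounds} (taking the trivial index $0$), and recalling $\rho_G > r_{\max}$ from Proposition \ref{CGexist}, I get
\[ R_v(x, x_m) \leq \frac{C}{k_\rho} \left( \frac{r_{\max}}{\rho_G} \right)^m \]
uniformly for $v \geq v_0$, with the same bound for $R_v(y_m, y)$. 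Since $r_{\max}/\rho_G < 1$, this decays geometrically in $m$.

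Taking $\limsup_{v \to \infty}$ of the triangle inequality, I use Proposition \ref{metriccvgce} to handle the middle term (as $x_m, y_m \in F^m$), obtaining
\[ \limsup_{v \to \infty} R_v(x, y) \leq d(p(x_m), p(y_m)) + \frac{2C}{k_\rho} \left( \frac{r_{\max}}{\rho_G} \right)^m. \]
Letting $m \to \infty$, the uniform bound implies $R_{v_0}(x, x_m) \to 0$ so $x_m \to x$ in the topology of $F$ (all $R_v$ induce the same topology by Theorem \ref{nfpDform}), and likewise $y_m \to y$. Continuity of $p: F \to F_*$ from Proposition \ref{pcts} then gives $p(x_m) \to p(x)$ and $p(y_m) \to p(y)$ in $F_*$, so that $d(p(x_m), p(y_m)) \to d(p(x), p(y))$. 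The geometric remainder vanishes, finishing the proof. The principal obstacle is securing the $v$-uniform decay of the endpoint errors; everything then follows from the continuity of $p$ and the pointwise convergence already established on $\bigcup_n F^n$.
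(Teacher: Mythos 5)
Your proof is correct and follows essentially the same route as the paper: approximate $x,y$ by points of $F^m$ lying in the same $m$-complexes, control the endpoint errors uniformly in $v$ via Lemma \ref{diamFw} (with Lemmas \ref{diambound} and \ref{Cbounds} supplying the $v$-uniform geometric decay), apply Proposition \ref{metriccvgce} to the middle term, and finish with the continuity of $p$. The only cosmetic difference is that the paper phrases the approximants as points of $D^0_i(x)$ and runs an $\epsilon$-argument rather than writing out the explicit $(r_{\max}/\rho_G)^m$ bound, which you make more explicit.
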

\begin{proof}
Fix a $v_0 > v_{\min}$. Let $(x_i)_i$ and $(y_i)_i$ be sequences in $\bigcup_{n \geq 0}F^n$ such that $x_i \in D^0_i(x)$ and $y_i \in D^0_i(y)$ for each $i$. Evidently (Lemma \ref{diamFw}) $x_i \to x$ and $y_i \to y$ as $i \to \infty$. Let $\epsilon > 0$. Then there exists $i_0 \in \Nbb$ such that for all $i \geq i_0$ we have $\sup_{v \geq v_0} R_v(x_i,x) \leq \epsilon$ and $\sup_{v \geq v_0} R_v(y_i,y) \leq \epsilon$ (by Lemma \ref{diamFw}) and $d(p(x_i),p(y_i)) \leq d(p(x),p(y)) + \epsilon$ (by continuity of $p$). Thus
\begin{equation*}
\begin{split}
\limsup_{v \to \infty} R_v(x,y) &\leq \limsup_{v \to \infty}\left( R_v(x,x_i) + R_v(x_i,y_i) + R_v(y_i,y) \right)\\
&\leq 2\epsilon + \limsup_{v \to \infty}R_v(x_i,y_i).
\end{split}
\end{equation*}
Then by Proposition \ref{metriccvgce},
\begin{equation*}
\begin{split}
\limsup_{v \to \infty} R_v(x,y) &\leq 2\epsilon + d(p(x_i),p(y_i))
\leq 3\epsilon + d(p(x),p(y)).
\end{split}
\end{equation*}
Finally we take $\epsilon \to 0$ to get the required result.
\end{proof}
\begin{prop}\label{existdense}
Let $x,y \in F$ such that $x \neq y$ and $p(x) = p(y)$. Then there exists $z \in \bigcup_{n \geq 0}F^n$ such that $p(x) = p(z) = p(y)$.
\end{prop}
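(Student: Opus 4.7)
The plan is to argue by contradiction. If $x$ or $y$ lies in $\bigcup_{n \geq 0} F^n$, we may take $z$ to be that point; otherwise assume both $x, y \notin \bigcup_{n \geq 0} F^n$ and suppose for contradiction that $p(z) \neq p(x)$ for every $z \in \bigcup_{n \geq 0} F^n$. The key quantitative input is that $R_v(x, y) \to 0$ as $v \to \infty$, which follows from Lemma~\ref{metriccvgext} together with $p(x) = p(y)$.

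Next I would use the self-similar structure to rescale. Since $x \notin \bigcup_m F^m$, the point $x$ lies in a unique $n$-complex $F_{w^{(n)}}$ for each $n$, and for $n$ large enough $y \notin F_{w^{(n)}}$ by the shrinking of cell diameters. The p.c.f.~property $F_{w^{(n)}} \cap F_u \subseteq \psi_{w^{(n)}}(F^0)$ for $u \neq w^{(n)}$ forces every continuous path from $x$ to $y$ to cross $\psi_{w^{(n)}}(F^0) \subseteq F^n$, so by the shorting inequality and the self-similar scaling~\eqref{Ecaln} of $\Ecal_v$ restricted to $F_{w^{(n)}}$,
\[R_v(x, y) \geq R_v(x, \psi_{w^{(n)}}(F^0)) = r_{w^{(n)}} \rho_n(v)^{-1} R_{\alpha^{-n}(v)}(\tilde x, F^0),\]
where $\tilde x := \psi_{w^{(n)}}^{-1}(x) \in F$. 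Using $\rho_n(v) \to \rho_G^n \in (0,\infty)$ and $\alpha^{-n}(v) \to \infty$, we deduce $R_{v'}(\tilde x, F^0) \to 0$ as $v' := \alpha^{-n}(v) \to \infty$.

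To extract the contradiction I would produce a uniform positive lower bound for $R_{v'}(\tilde x, F^0)$. For each $z_0 \in F^0$ the contradiction hypothesis gives $p(\psi_{w^{(n)}}(z_0)) \neq p(x)$, and applying $\phi_{w^{(n)}}^{-1}$ via Corollary~\ref{phip} and Proposition~\ref{phiinject} yields $p(z_0) \neq p(\tilde x)$; by continuity of $p$ this forces $\tilde x \notin \Ccal(p(z_0))$. The internal argument of Proposition~\ref{closure}, applied to the base discrete point $z_0 \in \bigcup_m F^m \cap \Ccal(p(z_0))$ and the subject point $\tilde x$, then produces constants $c_{z_0} > 0$ and $v'_0$ with $R_{v'}(\tilde x, z_0) \geq c_{z_0}$ for all $v' \geq v'_0$. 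Setting $c := \min_{z_0 \in F^0} c_{z_0} > 0$, for each $z_0$ let $h_{z_0}^{v'}$ be the continuous $\Ecal_{v'}$-harmonic function with $h_{z_0}^{v'}(\tilde x) = 1$ and $h_{z_0}^{v'}(z_0) = 0$; the maximum principle gives $h_{z_0}^{v'} \in [0,1]$ with $\Ecal_{v'}(h_{z_0}^{v'}, h_{z_0}^{v'}) = R_{v'}(\tilde x, z_0)^{-1} \leq c^{-1}$. By the Markovian contraction property of the Dirichlet form $\Ecal_{v'}$, the function $h^{v'} := \min_{z_0 \in F^0} h_{z_0}^{v'}$ lies in $D_{v'}$ with $\Ecal_{v'}(h^{v'}, h^{v'}) \leq |F^0|/c$, and satisfies $h^{v'}(\tilde x) = 1$ and $h^{v'}|_{F^0} = 0$; the variational characterization then yields $R_{v'}(\tilde x, F^0) \geq c/|F^0| > 0$ uniformly for $v' \geq v'_0$, contradicting the previous paragraph.

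The main obstacle is that the statement of Proposition~\ref{closure} cannot be invoked directly, because its hypothesis requires a prior discrete preimage of $p(\tilde x)$ which the contradiction assumption precisely forbids. Instead one must reuse the internal construction of its proof (the indicator $\1bb_{\Ccal(p(z_0)) \cap F^n}$ together with its harmonically extended function of uniformly bounded $\Ecal_{v'}$-energy) separately for each of the finitely many $z_0 \in F^0$, and then bundle the resulting pointwise resistance lower bounds into a lower bound on the set-valued $R_{v'}(\tilde x, F^0)$ through the Markovian min-of-harmonics construction.
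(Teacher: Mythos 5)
Your argument is correct, but it takes a genuinely different route from the paper's. You argue by contradiction and use self-similarity to zoom in: the scaling identity $R_v(x,\psi_{w^{(n)}}(F^0)) = r_{w^{(n)}}\rho_n(v)^{-1}R_{\alpha^{-n}(v)}(\tilde x,F^0)$ together with the cut-set inequality reduces everything to bounding $R_{v'}(\tilde x,F^0)$ away from zero, which you get by re-running the harmonic-extension construction from the proof of Proposition~\ref{closure} once for each $z_0\in F^0$ and combining the pointwise bounds via the Markovian sub-modularity $\Ecal(u\wedge w)\leq\Ecal(u)+\Ecal(w)$. The paper instead constructs $z$ directly: it shows $R_v(\partial D^0_n(x),\partial D^0_n(y))\leq R_v(x,y)\to 0$, which for each $n$ forces a path of high-conductance edges between the two finite boundary sets; this path must cross the fixed finite sets $\partial D^0_{n_0}(x)$ and $\partial D^0_{n_0}(y)$, so a pigeonhole argument extracts constant points $x',y'\in F^{n_0}$ with $x_{n_k}\sim x'\sim y'\sim y_{n_k}$ and $x_{n_k}\to x$, $y_{n_k}\to y$, and continuity of $p$ finishes. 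The paper's route is constructive and stays at the level of the discrete conductance networks; yours buys a clean reduction to a single-cell statement at the price of invoking several standard facts not proved in the paper (the shorting/cut-set inequality for the cell boundary $\psi_{w^{(n)}}(F^0)$, attainment of the resistance infimum by a harmonic minimizer, and the sub-modularity estimate), all of which do hold here. One small correction to your closing remark: the hypothesis of Proposition~\ref{closure} (a discrete preimage of $p(z_0)$, namely $z_0$ itself) is in fact satisfied; the real reason you cannot cite the statement is that its conclusion is vacuous for $\tilde x$, since $p(\tilde x)\neq p(z_0)$, so only the quantitative resistance bound inside its proof is of use.
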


\begin{proof}
If $x \in \bigcup_{n \geq 0}F^n$ or $y \in \bigcup_{n \geq 0}F^n$, then the result instantly holds, so we assume that neither of these is the case. Since $x \neq y$, by Lemma \ref{diamFw} there exists $n_0 \geq 0$ such that for all $n \geq n_0$ we have $D^0_n(x) \cap D^0_n(y) = \emptyset$. We have $p(x) = p(y)$ so by Lemma \ref{metriccvgext}, $\lim_{v \to \infty} R_v(x,y) = 0$.

For $n \geq n_0$ and $v > v_{\min}$, we consider the distance $R_v(\partial D^0_n(x),\partial D^0_n(y))$. The sets $\partial D^0_n(x)$ and $\partial D^0_n(y)$ are both subsets of $F^n$, so it is enough approximate their effective resistance by considering functions from $F^n$ to $\Rbb$. Suppose $v > v_{\min}$, and suppose that the function $f:F^n \to \Rbb$ takes the value $0$ on $\partial D^0_n(x)$ and takes the value $1$ on $\partial D^0_n(x)$. Since $x,y \notin \bigcup_{n \geq 0}F^n$ we must have $\partial D^0_n(x) = D^0_n(x) \cap F^n$ and $\partial D^0_n(y) = D^0_n(y) \cap F^n$, so the harmonic extension of $f$ to $\Gcal^v$ must take the value $0$ at $x$ and the value $1$ at $y$. It follows that
\begin{equation*}
R_v(\partial D^0_n(x),\partial D^0_n(y)) \leq R_v(x,y)
\end{equation*}
for all $n \geq n_0$ and $v > v_{\min}$. In particular,
\begin{equation*}
\lim_{v \to \infty} R_v(\partial D^0_n(x),\partial D^0_n(y)) = 0
\end{equation*}
for all $n \geq n_0$. This means that for each $n \geq n_0$, on $G^{v,n}$ there exists some $x_n \in \partial D^0_n(x)$ and some $y_n \in \partial D^0_n(y)$ such that there is a path from $x_n$ to $y_n$ consisting only of edges with conductance of the form $r_w^{-1} \rho_n(v)\alpha^{-n}(v)$. Since $D^0_{n_0}(x) \cap D^0_{n_0}(y) = \emptyset$, this path (which depends on $n$ in general) must contain some $x'_n \in \partial D^0_{n_0}(x)$ and some $y'_n \in \partial D^0_{n_0}(y)$. Since the set $\partial D^0_{n_0}(x) \times \partial D^0_{n_0}(y)$ is finite we may take a subsequence $(x_{n_k},y_{n_k})$ of $(x_n,y_n)$ such that $x'_{n_k} =: x' \in \partial D^0_{n_0}(x)$ and $y'_{n_k} =: y' \in \partial D^0_{n_0}(y)$ are constant over $k$. Therefore
\begin{equation*}
x_{n_k} \sim x' \sim y' \sim y_{n_k}
\end{equation*}
for all $k$. Since $x_{n_k} \in D^0_{n_k}(x)$ we must have $x_{n_k} \to x$ as $k \to \infty$ by Lemma \ref{diamFw}, and likewise $y_{n_k} \to y$. So by the continuity of $p$,
\begin{equation*}
p(x) = p(x') = p(y') = p(y),
\end{equation*}
and $x',y' \in \bigcup_{n \geq 0} F^n$.
\end{proof}

We may now finally prove the target result.
\begin{thm}\label{S*}
$\Scal_* = (F_*, (\phi_i)_{1 \leq i \leq N})$ is a generalized p.c.f.s.s.~set and its sequence of approximating networks (as in \eqref{Vn}) 
is $(F_*^n)_{n \geq 0}$.
\end{thm}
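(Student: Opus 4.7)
By Proposition~\ref{phiinject}, $(F_*, (\phi_i)_{1 \leq i \leq N})$ is already a self-similar structure: $F_*$ is compact, the $\phi_i$ are injective continuous contractions with Lipschitz constant at most $r_{\max}/\rho_G < 1$, and $F_* = \bigcup_i \phi_i(F_*)$. The proof therefore reduces to the identity $\pi_*(P(\Scal_*)) = F_*^0$, where $\pi_*:\Wbb \to F_*$ is the canonical map of $\Scal_*$. Once this is shown, finiteness of $F^0$ (hence of $F_*^0 = p(F^0)$) gives the generalized p.c.f.\ property, while the recursion $F_*^{n+1} = \bigcup_i \phi_i(F_*^n)$ from \eqref{F*approx} identifies $(F_*^n)_{n \geq 0}$ as the approximating sequence.

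The inclusion $F_*^0 \subseteq \pi_*(P(\Scal_*))$ is straightforward: given $q \in F^0 = \pi(P(\Scal))$, there is by definition a non-empty word $w$ with $\psi_w(q) \in B(\Scal)$, and then $\phi_w(p(q)) = p(\psi_w(q)) \in p(B(\Scal)) \subseteq B(\Scal_*)$ by Corollary~\ref{phip} and the fact that $p(F_i \cap F_j) \subseteq p(F_i) \cap p(F_j) = \phi_i(F_*) \cap \phi_j(F_*)$.

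For the reverse inclusion I take $y \in \pi_*(P(\Scal_*))$, lift to some $\tilde{y} \in p^{-1}(y)$ via Proposition~\ref{pcts}, and pick a non-empty word $w$ with $\phi_w(y) \in B(\Scal_*)$. Writing $x := \psi_w(\tilde{y}) \in F_w$, one has $p(x) = \phi_w(y)$, so the $\sim$-class $[x]_\sim = p^{-1}(p(x))$ meets $F_i$ and $F_j$ for some $i \neq j$. Under the $\Dbb$-injectivity of Assumption~\ref{similass}, $[x]_\sim \cap F_w = \psi_w([\tilde{y}]_\sim)$, and so it suffices to prove $[x]_\sim \cap \psi_w(F^0) \neq \emptyset$: any such point writes as $\psi_w(q)$ with $q \in F^0$, whence $\tilde{y} \sim q$ and $y = p(q) \in F_*^0$.

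The proof of $[x]_\sim \cap \psi_w(F^0) \neq \emptyset$ splits into two cases. If $[x]_\sim \subseteq F_w$, then since $i \neq j$ at least one of them, say $j$, differs from $w_1$, and the $F_j$-member of the class lies in $F_w \cap F_j \subseteq F_{w_1} \cap F_j \subseteq B(\Scal)$; the characterization $\pi(P(\Scal)) = \{z \in F : \psi_{w'}(z) \in B(\Scal) \text{ for some non-empty } w'\}$ applied to the preimage under $\psi_w$ then places this member in $\psi_w(F^0)$. Otherwise there is $x' \in [x]_\sim \setminus F_w$ lying in some $|w|$-cell $F_{w'}$ with $w' \neq w$; Proposition~\ref{existdense} followed by two applications of Corollary~\ref{closure2} yields $z_w \in F_w \cap \bigcup_n F^n$ and $z_{w'} \in F_{w'} \cap \bigcup_n F^n$ both $\sim$-equivalent to $x$. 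The defining high-conductance path in $G^{v,n}$ realizing $z_w \sim z_{w'}$ must exit $F_w$; since every edge of $G^{v,n}$ lies inside a single $n$-cell, which is contained in a single $|w|$-cell, the transition out of $F_w$ must occur at a shared vertex in $F_w \cap F_u$ for some $u \in \Wbb_{|w|}$ with $u \neq w$. A prefix argument using the generalized p.c.f.~structure of $\Scal$ shows $F_w \cap F_u \subseteq \psi_w(F^0)$ whenever $w, u$ have equal length and $w \neq u$, so this exit vertex supplies the required point of $[x]_\sim \cap \psi_w(F^0)$. The principal obstacle is this corner-exit analysis, together with the verification $F_w \cap F_u \subseteq \psi_w(F^0)$; both rely essentially on the cellular structure and on the pre-$\psi$ closure of $F^0$.
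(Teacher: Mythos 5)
Your proposal is correct and follows essentially the same route as the paper: both reduce the theorem to $\pi(P(\Scal_*))=F_*^0$ via the characterization of the post-critical set through $B(\Scal_*)$, and both rely on the same key ingredients — Proposition~\ref{existdense}, Corollary~\ref{closure2}, the identity $F_w\cap F_u=\psi_w(F^0)\cap\psi_u(F^0)$, and the observation that a high-conductance path leaving $F_w$ must cross $\psi_w(F^0)$. The only differences are organizational: you case-split on whether $[x]_\sim\subseteq F_w$ rather than on whether a witness $y\in B(\Scal)$ lies in $F_w$, and you thereby avoid the paper's separate proof of the full equality $p(B(\Scal))=B(\Scal_*)$, needing only the easy inclusion (just make sure to note the trivial sub-cases where $z_{w'}$ already lies in $F_w$, which are settled immediately by the same Kigami identity).
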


\begin{proof}
We see that
\begin{equation*}
\begin{split}
B(\Scal_*) &= \bigcup_{i \neq j} \left(\phi_i \left( F_* \right) \cap \phi_j \left( F_* \right) \right)\\
&= \bigcup_{i \neq j} \left(p \left( F_i \right) \cap p \left( F_j \right) \right)
\end{split}
\end{equation*}
and so $p(B(\Scal)) \subseteq B(\Scal_*)$. If $y \in B(\Scal_*)$, then let $i \neq j$ be such that $y \in \phi_i \left( F_* \right) \cap \phi_j 
\left( F_* \right)$. Then there exist $x_i \in F_i$ and $x_j \in F_j$ such that $p(x_i) = y = p(x_j)$. If $x_i = x_j$, then $x_i \in F_i \cap F_j$ 
and so $y \in p(B(\Scal))$. On the other hand if $x_i \neq x_j$, then $x_i \sim x_j$ and so by Proposition \ref{existdense} there exists $z \in \bigcup_{n\geq 0}F^n$ such that $z \sim x_i \sim x_j$. Then by Corollary \ref{closure2} there must exist $x'_i \in F_i \cap \bigcup_{n\geq 0}F^n$ and $x'_j \in F_j \cap \bigcup_{n\geq 0}F^n$ such that
\begin{equation*}
x'_i \sim x_i \sim x_j \sim x'_j.
\end{equation*}
Choose $n \geq 0$ such that $x'_i,x'_j \in F^n$. Then in $G^{v,n}$ there must exist a path between $x'_i$ and $x'_j$ consisting only of edges with conductance $r_w^{-1}\rho_n(v)\alpha^{-n}(v)$. Since $i \neq j$, this path must contain some element $z$ of $F_i \cap F_k$ for some $k \in \{1,\ldots,N\}$ such that $k \neq i$. So $z \in B(\Scal)$ and $p(z) = y$, so $y \in p(B(\Scal))$. Thus
\begin{equation*}
p(B(\Scal)) = B(\Scal_*).
\end{equation*}
Now it follows from the surjectivity of $p$ that
\begin{equation}\label{piPS}
\begin{split}
\pi(P(\Scal_*)) &= \left\{ x \in F_*: \exists w \in \bigcup_{n \geq 1} \Wbb_n,\ \phi_w(x) \in B(\Scal_*) \right\}\\
&= \left\{ p(x) \in F_*: x \in F,\ \exists w \in \bigcup_{n \geq 1} \Wbb_n,\ p \circ \psi_w(x) \in B(\Scal_*) \right\}\\
&= \left\{ p(x) \in F_*: x \in F,\ \exists w \in \bigcup_{n \geq 1} \Wbb_n,\ \exists y \in B(\Scal),\ \psi_w(x) \sim y \right\}\\
&= p \left( \left\{ x \in F: \exists w \in \bigcup_{n \geq 1} \Wbb_n,\ \exists y \in B(\Scal),\ \psi_w(x) \sim y \right\} \right).
\end{split}
\end{equation}
Recall that
\begin{equation*}
\pi(P(\Scal)) = \left\{ x \in F: \exists w \in \bigcup_{n \geq 1} \Wbb_n,\  \psi_w(x) \in B(\Scal) \right\},
\end{equation*}
so it is clear that $p(\pi(P(\Scal))) \subseteq \pi(P(\Scal_*))$.

Now suppose that $x$ is a member of the subset of $F$ described by the last line of \eqref{piPS}. That is, there exists $n \geq 1$ and $w \in \Wbb_n$ and $y \in B(\Scal)$ such that $\psi_w(x) \sim y$. We observe that $y \in F^1$.

Assume that $x \notin \pi(P(\Scal)) = F^0$. Since $n \geq 1$, if $y \in F_w$ then $y \in \psi_w(F^0)$ so there exists some $z \in F^0$ such that $p(\psi_w(z)) = p(\psi_w(x))$. We now suppose that $y \notin F_w$. By Corollary \ref{closure2} there exists $x' \in F_w \cap \bigcup_{n \geq 0} F^n$ such that $x' \sim \psi_w(x) \sim y$. Pick $m \geq n$ such that $x',y \in F^m$. Then there exists a path in $G^{v,m}$ from $x'$ to $y$ consisting only of edges with conductance of the form $r_{w'}^{-1}\rho_m(v)\alpha^{-m}(v)$. Since $x' \in F_w$ and we have assumed that $y \notin F_w$, this path must contain some element of $\psi_w(F^0)$. So there exists some $z \in F^0$ such that $p(\psi_w(z)) = p(\psi_w(x))$.

We have seen that in every case there exists $z \in F^0 = \pi(P(\Scal))$ such that $p(\psi_w(z)) = p(\psi_w(x))$. So $\phi_w(p(z)) = \phi_w(p(x))$ and so $p(z) = p(x)$ by the injectivity of the $\phi_i$ (Proposition \ref{phiinject}). We conclude that
\begin{equation*}
p \left( \left\{ x \in F: \exists w \in \bigcup_{n \geq 1} \Wbb_n,\ \exists y \in B(\Scal),\ \psi_w(x) \sim y \right\} \right) \subseteq p(\pi(P(\Scal))),
\end{equation*}
and so
\begin{equation*}
\pi(P(\Scal_*)) = p(\pi(P(\Scal))) = p(F^0) = F^0_*.
\end{equation*}
This is finite, so $\Scal_*$ is a generalized p.c.f.s.s.~set. Then \eqref{F*approx} implies that $(F_*^n)_{n \geq 0}$ is the 
associated sequence of approximating networks.
\end{proof}

\subsection{The limit process}

Now let $\nu$ be the pushforward measure of $\mu$ onto $F_*$ through the continuous function $p$. It is easy to verify that $\nu$ is 
a Bernoulli measure and that the $\nu_n$ are its approximations as given in Definition~\ref{approxmeasure}.
\begin{thm}\label{F*Dform}
$\Ecal^{(0)}_*$ is a regular non-degenerate fixed point of the renormalization operator $\Lambda_*$ of $\Scal_*$ with respect to the 
resistance vector $r$. Let
\begin{equation*}
\begin{split}
&D_* = \left\{ f \in C(F_*): \sup_{n} \Ecal^{(n)}_*(f,f) < \infty \right\},\\
&\Ecal_*(f,f) = \sup_{n} \Ecal^{(n)}_*(f,g),\quad f,g \in D_*.
\end{split}
\end{equation*}
Then the pair $(\Ecal_*,D_*)$ is a regular local irreducible Dirichlet form on $\Lcal^2(F_*,\nu)$.

If $X^{*,n}$ is the Markov process associated with $H^n = (F_*^n,\Ecal^{(n)}_*)$ and $\Lcal^2(F^n_*, \nu_n)$ for each $n$, and if $X^*$ 
is the Markov process associated with $(F_*,\Ecal_*)$ and $\Lcal^2(F_*, \nu)$, then $X^{*,n} \to X^*$ weakly in $\Dcal_{F_*}[0,\infty)$.
\end{thm}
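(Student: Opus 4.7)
The plan is to reduce everything to the fixed-point theory of Section~\ref{fpd} applied to the generalized p.c.f.s.s.~set $\Scal_*$ produced by Theorem~\ref{S*}. The central step is to identify $\Ecal^{(0)}_*$ as a regular non-degenerate fixed point of the renormalization map $\Lambda_* = T \circ R_*$ associated with $\Scal_*$; everything else then follows from Theorem~\ref{fpDform} and Theorem~\ref{fpcvgce}.

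First I would relate $\Ecal^{(1)}_*$ to the replication of $\Ecal^{(0)}_*$ under $\Scal_*$. For $f \in C(F^1_*)$, the identity $\Ecal^{(n)}_\infty(g,g) = \lim_{v \to \infty} \Ecal^{(n)}_v(g,g)$ combined with the one-step recursion in~\eqref{EcalIterate} gives, for $g = f \circ p \in C_*(F^1)$,
\begin{equation*}
\Ecal^{(1)}_*(f,f) = \lim_{v \to \infty} \rho(\alpha^{-1}(v)) \sum_{i=1}^N r_i^{-1} \Ecal^{(0)}_{\alpha^{-1}(v)}((f \circ p) \circ \psi_i,(f \circ p) \circ \psi_i).
\end{equation*}
Since $\alpha^{-1}(v) \to \infty$ as $v \to \infty$ and $\rho(\alpha^{-1}(v)) \to \rho_G$ by Proposition~\ref{CGexist}, and since Corollary~\ref{phip} gives $(f \circ p) \circ \psi_i = (f \circ \phi_i) \circ p$, the right-hand side equals $\rho_G \sum_i r_i^{-1} \Ecal^{(0)}_*(f \circ \phi_i, f \circ \phi_i) = \rho_G R_*(\Ecal^{(0)}_*)(f,f)$. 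Thus $\Ecal^{(1)}_* = \rho_G R_*(\Ecal^{(0)}_*)$. Combining this with Proposition~\ref{Btrace} (applied to $m=0$, $n=1$) yields
\begin{equation*}
\Lambda_*(\Ecal^{(0)}_*) = T\bigl(R_*(\Ecal^{(0)}_*)\bigr) = \rho_G^{-1} T(\Ecal^{(1)}_*) = \rho_G^{-1} \Ecal^{(0)}_*,
\end{equation*}
so $\Ecal^{(0)}_*$ is a fixed point with eigenvalue $\rho_G^{-1}$. Regularity ($0 < r_i \rho_G^{-1} < 1$) is immediate from $\rho_G > r_{\max}$ in Proposition~\ref{CGexist}, and non-degeneracy (irreducibility) follows because the conductance network $H^0$ on $F^0_*$ is obtained from the connected network $G^{v,0}$ by quotienting out equivalence classes under $\sim$ and the non-identified edges carry finite positive conductance by construction, so $H^0$ remains connected.

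With $\Ecal^{(0)}_*$ a regular non-degenerate fixed point on the connected generalized p.c.f.s.s.~set $\Scal_*$ (connectedness of $F_*$ comes from it being the continuous image of the connected set $F$ under the surjection $p$ of Proposition~\ref{pcts}), Theorem~\ref{fpDform} applies directly to produce the regular local irreducible Dirichlet form $(\Ecal_*,D_*)$ on $\Lcal^2(F_*,\nu)$; here one checks, using the definition of $\nu$ as the pushforward $\mu \circ p^{-1}$, that $\nu$ is the Bernoulli measure on $F_*$ with weights $\theta$ and that its approximations in the sense of Definition~\ref{approxmeasure} are precisely the $\nu_n$. The weak convergence statement for $X^{*,n} \to X^*$ is then a direct application of Theorem~\ref{fpcvgce} to $\Scal_*$ with the nested sequence $(\Ecal^{(n)}_*)_{n \geq 0}$ whose nestedness was already established in Proposition~\ref{Btrace}.

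The only step with any substantive content is the limit identification $\Ecal^{(1)}_* = \rho_G R_*(\Ecal^{(0)}_*)$; potentially delicate is ensuring that the infinity-valued extension of $\Ecal^{(n)}_\infty$ on $C(F^n) \setminus C_*(F^n)$ does not obstruct the passage through the replication formula, but since every term on the right-hand side of~\eqref{EcalIterate} involves a function of the form $g \circ \psi_i$ with $g \in C_*(F^n)$, and $g \circ \psi_i \in C_*(F^{n-1})$ by Assumption~\ref{similass}, all quantities in play remain finite and the limit is genuine. Everything else is a direct reduction to the already-established fixed-point theory.
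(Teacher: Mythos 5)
Your proposal is correct and reaches the same three conclusions as the paper, but the key step---identifying $\Ecal^{(0)}_*$ as a fixed point of $\Lambda_*$ with eigenvalue $\rho_G^{-1}$---is organised differently. The paper unwinds $\Lambda_*(\Ecal^{(0)}_*)$ directly as an infimum over extensions to $C(F^1_*)$, pulls this back to $C_*(F^1)$, and then invokes Lemma~\ref{topo} a second time to interchange the infimum with the limit $v \to \infty$, landing on $\lim_{v}\rho(v)^{-1}\Ecal^{(0)}_{\alpha(v)}(f\circ p, f\circ p) = \rho_G^{-1}\Ecal^{(0)}_*(f,f)$. You instead split $\Lambda_* = T \circ R_*$ and handle the two factors separately: the replication step is an exact identity via~\eqref{EcalIterate} (no infimum appears, so no interchange is needed there---only the convergence $\rho(\alpha^{-1}(v)) \to \rho_G$ and $\Ecal^{(0)}_{\alpha^{-1}(v)} \to \Ecal^{(0)}_\infty$ on $C_*(F^0)$, together with $\phi_i \circ p = p \circ \psi_i$ from Corollary~\ref{phip}), giving $\Ecal^{(1)}_* = \rho_G R_*(\Ecal^{(0)}_*)$; the trace step is then exactly Proposition~\ref{Btrace} with $m=0$, $n=1$, plus positive homogeneity of the trace. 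This buys a slightly cleaner argument in that the delicate infimum/limit interchange is confined to the one place it was already carried out (Proposition~\ref{Btrace}) rather than being repeated, at the cost of needing the auxiliary identity $\Ecal^{(1)}_* = \rho_G R_*(\Ecal^{(0)}_*)$, which you justify adequately (in particular the observation that $g \circ \psi_i \in C_*(F^0)$ by Assumption~\ref{similass} keeps all terms finite). Your explicit arguments for regularity (via $\rho_G > r_{\max}$ from Proposition~\ref{CGexist}), irreducibility of $H^0$ (connectivity survives the quotient since the unshorted edges keep conductance $1$), and connectedness of $F_*$ (continuous surjective image of $F$) fill in points the paper leaves implicit; the remainder of your reduction to Theorems~\ref{fpDform} and~\ref{fpcvgce} matches the paper exactly.
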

\begin{proof}
$\Lambda_*$ is defined on the set of conservative Dirichlet forms on $F^0_*$. Let $R_*$ be the associated replication operator. We see that
\begin{equation*}
\begin{split}
R_*(\Ecal^{(0)}_*)(f,g) &= \sum_{i=1}^N r_i^{-1} \Ecal^{(0)}_*(f \circ \phi_i , g \circ \phi_i)\\
&= \sum_{i=1}^N r_i^{-1} \Ecal^{(0)}_\infty (f \circ \phi_i \circ p , g \circ \phi_i \circ p)\\
&= \sum_{i=1}^N r_i^{-1} \Ecal^{(0)}_\infty (f \circ p \circ \psi_i , g \circ p \circ \psi_i)\\
&= \lim_{v \to \infty} R(\Ecal^{(0)}_v)(f \circ p, g \circ p)
\end{split}
\end{equation*}
where $R$ is the replication operator of $\Scal$. Then
\begin{equation*}
\begin{split}
\Lambda_*(\Ecal^{(0)}_*)(f,f) &= \inf\{ R_*(\Ecal^{(0)}_*)(g,g) : g \in C(F^1_*),\ g|_{F^0_*} = f \}\\
&= \inf\{ \lim_{v \to \infty} R(\Ecal^{(0)}_v)(g \circ p, g \circ p) : g \in C(F^1_*),\ g|_{F^0_*} = f \}\\
&= \inf\{ \lim_{v \to \infty} R(\Ecal^{(0)}_v)(g, g) : g \in C_*(F^1),\ g|_{F^0} = f \circ p \}
\end{split}
\end{equation*}
Now we observe that for any $g \in C(F^1) \setminus C_*(F^1)$, we have $\lim_{v \to \infty} R(\Ecal^{(0)}_v)(g, g) = \infty$. Also 
we can restrict the set over which we take the infimum to functions taking values in $[\min f, \max f]$. This is a compact set, so we 
can use Lemma~\ref{topo}. So
\begin{equation*}
\begin{split}
\Lambda_*(\Ecal^{(0)}_*)(f,f) &= \lim_{v \to \infty} \inf\{ R(\Ecal^{(0)}_v)(g, g) : g \in C(F^1),\ g|_{F^0} = f \circ p \}\\
&= \lim_{v \to \infty} \Lambda(\Ecal^{(0)}_v)(f \circ p, f \circ p)\\
&= \lim_{v \to \infty} (\rho(v)^{-1} \Ecal^{(0)}_{\alpha(v)}(f \circ p, f \circ p))\\
&= \rho_G^{-1} \Ecal^{(0)}_\infty(f \circ p, f \circ p)\\
&= \rho_G^{-1} \Ecal^{(0)}_*(f, f)
\end{split}
\end{equation*}
and so $\Ecal^{(0)}_*$ is a regular non-degenerate fixed point of $\Lambda_*$ with eigenvalue $\rho_G^{-1}$. It is then simple to verify that
\begin{equation*}
\Ecal^{(n)}_*(f,g) := \rho_G^n\sum_{w \in \Wbb_n} r_w^{-1} \Ecal^{(0)}_*(f \circ \phi_w , g \circ \phi_w), \quad f,g \in C(F_*^n)
\end{equation*}
and so we are able to use Theorems~\ref{fpDform} and~\ref{fpcvgce}.
\end{proof}

\section{Convergence of processes}

We continue with the set-up of the previous sections; We have $\Scal = (F,(\psi_i)_{1 \leq i \leq N})$ a connected generalized p.c.f.s.s.~set with a one-parameter iterable family $\Dbb$ (Assumption \ref{1paramass}, Assumption \ref{alphalimass}) satisfying $\Dbb$-injectivity (Assumption \ref{similass}). We have the spaces $\Gcal^v = (F,\Ecal_v)$ and the limit space $(F_*, \Ecal_*)$ constructed in the previous section. Recall that $H^n = (F_*^n , \Ecal^{(n)}_*)$. 
Let $\Hcal = (F_*, \Ecal_*)$. Recall that $X^v$ is the diffusion associated with $\Gcal^v$ and $X^*$ is the diffusion associated with $\Hcal$.

\subsection{The ambient space}

Fix a $v_0 > v_{\min}$. We aim to construct a metric space $\Mcal = (E,d)$ such that all of our metric spaces $(\Gcal^v)_{v \geq v_0}$ 
and $\Hcal$ exist as disjoint subspaces of $\Mcal$ in such a way that $\Gcal^v \to \Hcal$ as $v \to \infty$ in the Hausdorff topology on 
$\Mcal$. This will mean that we can define all of the $X^v$ and $X^*$ on the same space and so it will be easier to study the relationship 
between them. The new metric $d$ will agree with the old definition of $d$ as the resistance metric on $\Hcal$, so no confusion will occur. 
We construct the underlying set $E$ as follows.

Let $E$ be the disjoint union
\begin{equation*}
E = \Hcal \sqcup \bigsqcup_{v \geq v_0} \Gcal^v.
\end{equation*}
By construction $E$ contains each $\Gcal^v$ and $\Hcal$. Notice that $E$ also contains each $G^{v,n}$ as a subspace of its 
respective $\Gcal^v$ and likewise contains each $H^n$ as a subspace of $\Hcal$.

We also perform our last extension of the projection map $p$. We will now have
\begin{equation*}
p: \Mcal \to \Hcal
\end{equation*}
where $p(x)$ agrees with the previous definition if $x \in \Gcal^v$ for any $v \geq v_0$, and $p(x) = x$ if $x \in \Hcal$. We also define 
restrictions of this new $p$ to certain subspaces of $E$: for $v \geq v_0$ and $n \geq 0$ let the functions $p_{v,n}$ and $p_v$ be given by
\begin{equation*}
\begin{split}
p_{v,n} &= p|_{G^{v,n}}: G^{v,n} \to H^n,\\
p_v &= p|_{\Gcal^v}: \Gcal^v \to \Hcal.
\end{split}
\end{equation*}

\subsubsection{Constructing the metric}\label{metricconstruct}

We start with an extension of Lemma \ref{diamFw}.
\begin{lem}\label{diamH}
$\diam \Hcal < \infty$. Let $n \geq 0$ and $w \in \Wbb_n$. Then
\begin{equation*}
\diam (\phi_w(F_*)) \leq r_{\max}^n\rho_G^{-n} \diam \Hcal.
\end{equation*}
\end{lem}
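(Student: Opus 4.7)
The plan has two independent parts.

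For the first claim, $\diam \Hcal < \infty$, I would first take any two points $y_1, y_2$ in the dense subset $\bigcup_{n \geq 0} F_*^n$ of $F_*$. Each $y_i$ is of the form $p(x_i)$ for some $x_i \in \bigcup_{n \geq 0} F^n$, so by Proposition~\ref{metriccvgce} we have $d(y_1, y_2) = \lim_{v \to \infty} R_v(x_1, x_2)$. Combining this with Lemma~\ref{diambound} gives $d(y_1, y_2) \leq \sup_{v \geq v_0} \diam \Gcal^v < \infty$, uniformly in $y_1, y_2 \in \bigcup_{n \geq 0} F_*^n$. Since $F_*$ was defined as the completion of $\bigcup_{n \geq 0} F_*^n$ under $d$, this dense subset has the same diameter as $F_*$, proving $\diam \Hcal < \infty$.

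For the second claim, I would simply iterate the contraction property from Proposition~\ref{phiinject}, which asserts that each $\phi_i : F_* \to F_*$ has Lipschitz constant at most $r_{\max}\rho_G^{-1}$. By induction on $n$, the composition $\phi_w = \phi_{w_1} \circ \cdots \circ \phi_{w_n}$ has Lipschitz constant at most $(r_{\max}\rho_G^{-1})^n = r_{\max}^n \rho_G^{-n}$. Taking a supremum over pairs of points in $F_*$,
\begin{equation*}
\diam(\phi_w(F_*)) \leq r_{\max}^n \rho_G^{-n} \diam(F_*) = r_{\max}^n \rho_G^{-n} \diam \Hcal,
\end{equation*}
and the first part guarantees the right-hand side is finite.

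The main obstacle, such as it is, lies in the first claim: one must avoid trying to apply Lemma~\ref{metriccvgext} directly, since that lemma only gives $\limsup R_v(x,y) \leq d(p(x),p(y))$, the wrong direction for deducing a bound on $d$. The correct route is to work on the dense subset $\bigcup F_*^n$ (where Proposition~\ref{metriccvgce} supplies the exact identity between $d$ and $\lim R_v$) and then extend to the completion $F_*$ by continuity of the metric $d$. The second claim is a routine iteration.
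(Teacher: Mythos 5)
Your proposal is correct and follows essentially the same route as the paper: the second claim is obtained, exactly as in the paper's proof, by iterating the Lipschitz bound $r_{\max}\rho_G^{-1}$ from Proposition~\ref{phiinject} to get constant $r_{\max}^n\rho_G^{-n}$ for $\phi_w$. For the first claim the paper simply cites compactness of the self-similar structure $(F_*,(\phi_i)_{1\leq i\leq N})$, whereas you re-derive boundedness of $d$ on the dense subset $\bigcup_{n\geq 0}F_*^n$ via Proposition~\ref{metriccvgce} and Lemma~\ref{diambound} and pass to the completion --- which is precisely the argument the paper itself uses when first defining $F_*$ as the completion of a bounded space, so the two are equivalent.
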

\begin{proof}
$\Hcal$ is a self-similar structure so is compact, so $\diam \Hcal < \infty$. This proof is essentially the same as the proof of Lemma \ref{diamFw}. The function $\phi_w$ on $\Hcal$ is a 
bijective contraction with Lipschitz constant at most $r_{\max}^n\rho_G^{-n}$, which is strictly less than $1$ by 
Assumption~\ref{1paramass}. So
\begin{equation*}
\diam \phi_w(F_*) \leq r_{\max}^n\rho_G^{-n}\diam \Hcal.
\end{equation*}
\end{proof}
\begin{defn}
Let $\delta_v$ be the Hausdorff metric on non-empty compact subsets of $\Gcal^v$. Let $\delta_*$ be the Hausdorff metric on 
non-empty compact subsets of $\Hcal$.
\end{defn}

\begin{cor}\label{Hausbd}
For all $n \geq 0$,
\begin{equation*}
\sup_{v \geq v_0}\delta_v(G^{v,n},\Gcal^v) \leq r_{\max}^n\rho_n(v)^{-1} \sup_{v \geq v_0} \diam \Gcal^{v}
\end{equation*}
and
\begin{equation*}
\delta_*(H^n,\Hcal) \leq r_{\max}^n\rho_G^{-n} \diam \Hcal.
\end{equation*}
\end{cor}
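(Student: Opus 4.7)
The plan is to observe that in both cases we have a set together with its own subset, so the Hausdorff distance reduces to a one-sided supremum. Writing $\Gcal^v=(F,R_v)$ and $G^{v,n}=(F^n,R_v)$ with $F^n\subseteq F$, we have
\[
\delta_v(G^{v,n},\Gcal^v) \;=\; \sup_{x\in F} R_v(x,F^n),
\]
and similarly $\delta_*(H^n,\Hcal)=\sup_{y\in F_*}d(y,F_*^n)$. Thus everything reduces to bounding how far a generic point of the big space can be from the $n$-th approximating set.

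Next, I would use the cell decomposition $F=\bigcup_{w\in\Wbb_n}F_w$. Given $x\in F$, pick $w\in\Wbb_n$ with $x\in F_w$. Since $F^n\supseteq\psi_w(F^0)\subseteq F_w$, any point of $\psi_w(F^0)$ is in $F^n\cap F_w$, so
\[
R_v(x,F^n)\;\leq\;\diam(F_w,R_v).
\]
Now Lemma~\ref{diamFw} gives exactly $\diam(F_w,R_v)\leq r_{\max}^n\rho_n(v)^{-1}\sup_{v'\geq v_0}\diam\Gcal^{v'}$ for all $v\geq v_0$. Taking the supremum over $x\in F$ and then (where written) over $v\geq v_0$ produces the first bound; the dependence on $v$ on the right-hand side is absorbed into $\rho_n(v)^{-1}$, and the uniform bound $\sup_{v'\geq v_0}\diam\Gcal^{v'}<\infty$ from Lemma~\ref{diambound} makes the right-hand side finite.

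For the second inequality I would repeat the argument verbatim in the limit space. Using $F_*=\bigcup_{w\in\Wbb_n}\phi_w(F_*)$ from Theorem~\ref{S*} and the containment $F_*^n\supseteq\phi_w(F_*^0)\subseteq\phi_w(F_*)$, for any $y\in F_*$ we pick $w\in\Wbb_n$ with $y\in\phi_w(F_*)$ and get $d(y,F_*^n)\leq\diam\phi_w(F_*)$. Then Lemma~\ref{diamH} gives $\diam\phi_w(F_*)\leq r_{\max}^n\rho_G^{-n}\diam\Hcal$, and taking the supremum over $y$ yields the stated bound.

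There is no real obstacle here; the only subtlety worth flagging is that in both settings the Hausdorff distance is genuinely one-sided because the subset is contained in the larger set, and that the bounds of Lemmas~\ref{diamFw} and~\ref{diamH} are independent of the choice of $w\in\Wbb_n$, so they pass directly to the supremum.
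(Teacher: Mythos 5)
Your proposal is correct and follows essentially the same route as the paper: both reduce the Hausdorff distance to the one-sided supremum (since $F^n\subseteq F$ and $F^n_*\subseteq F_*$), locate each point in an $n$-complex $F_w$ (resp.\ $\phi_w(F_*)$) containing a point of the approximating set, and invoke Lemma~\ref{diamFw} (resp.\ Lemma~\ref{diamH}) for the diameter bound. Your remark about the residual $v$-dependence in $\rho_n(v)^{-1}$ on the right-hand side matches how the statement is meant to be read.
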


\begin{proof}
We observe that $G^{v,n}$ is a metric subspace of $\Gcal^v$. If $x \in \Gcal^v$ then there exists $w \in \Wbb_n$ such that $x \in F_w$. Then 
there exists $y \in G^{v,n}$ such that $y \in F_w$ also. Then by Lemma~\ref{diamFw},
\begin{equation*}
R_v(x,y) \leq r_{\max}^n\rho_n(v)^{-1} \sup_{v' \geq v_0} \diam \Gcal^{v'}.
\end{equation*}
It follows that
\begin{equation*}
\delta_v(G^{v,n},\Gcal^v) \leq r_{\max}^n\rho_n(v)^{-1} \sup_{v' \geq v_0} \diam \Gcal^{v'}.
\end{equation*}
The second statement is proven similarly using Lemma \ref{diamH}.
\end{proof}

We now have all the pieces we need to construct $d$.
\begin{defn}[The metric $d$ on $\Mcal$]\label{Mmetric}
If $x,y \in \Gcal^v$ for some $v \geq v_0$ then define $d(x,y) = R_v(x,y)$. On $\Hcal$ likewise we define $d$ to agree with the 
existing resistance metric (which was also called $d$).

Let $n_0 = 0$. For each $k \geq 1$, using Corollary~\ref{Hausbd}, Lemma~\ref{Cbounds} and regularity (Assumption~\ref{1paramass}) 
we can choose a $n_k \geq 0$ such that for all $n \geq n_k$,
\begin{equation*}
\begin{split}
\sup_{v  \geq v_0}\delta_v(G^{v,n},\Gcal^v) &\leq \frac{1}{2k},\\
\delta_*(H^n, \Hcal) &\leq \frac{1}{2k}.
\end{split}
\end{equation*}
Without loss of generality we may assume that the sequence $(n_k)_{k \geq 0}$ is strictly increasing.

Let $c = \sup_{v \geq v_0} \dis p_{v,0}$, which is finite by Proposition~\ref{metriccvgce} and the fact that the resistance metric $R_v$ 
restricted to $F^0$ is continuous in $v$ (using Lemma~\ref{topo}). Now for each $k \geq 1$ we use the finiteness of $F^{n_k}$ and 
Proposition~\ref{metriccvgce} to find a $v_k \geq v_0$ such that for all $v \geq v_k$,
\begin{equation*}
\dis p_{v,n_k} \leq \frac{c}{k+1}.
\end{equation*}
Again without loss of generality we may assume that $v_{k+1} \geq v_k + 1$ for all $k \geq 0$.

Let $v \geq v_0$. By the construction of the sequence $(v_k)_{k \geq 0}$ we may pick the unique $k \geq 0$ such that $v_k \leq v < v_{k+1}$. 
For this $k$, for each $x \in G^{v,n_k} \subseteq \Gcal^v$ we set
\begin{equation}\label{dxpx}
d(x,p_{v,n_k}(x)) = \frac{c}{2(k+1)}.
\end{equation}
The restriction of $d$ to the spaces $\Gcal^v$ and $\Hcal$ is then constructed naturally around this: For $x \in \Gcal^v$ and $y \in \Hcal$, set
\begin{equation*}
d(x,y) = \inf_{x' \in G^{v,n_k}} \left\{ d(x,x') + \frac{c}{2(k+1)} + d(p_{v,n_k}(x'),y) \right\}.
\end{equation*}
The bound on the distortion of $p_{v,n_k}$ ensures that this is still a metric. Finally, for $v_1 \neq v_2$, $x_1 \in \Gcal^{v_1}$, 
$x_2 \in \Gcal^{v_2}$, we define
\begin{equation*}
d(x_1,x_2) = \inf_{y \in \Hcal} \left\{ d(x_1,y) + d(y,x_2) \right\}.
\end{equation*}
This is indeed a metric. Checking that the triangle inequality holds is a simple but tedious exercise.
\end{defn}

\subsection{Convergence}
\begin{defn}
For each $v \geq v_0$ let $\mu^v$ be the Borel probability measure on $\Mcal$ given by
\begin{equation*}
\mu^v(A) = \mu(A \cap \Gcal^v),
\end{equation*}
where $\mu$ is interpreted as a measure on $\Gcal^v = (F,R_v)$. That is, $\mu^v$ is the measure $\mu$ defined on the $v$th copy of $F$ 
in $E$.
\end{defn}
\begin{prop}\label{GHP}
$\Gcal^v \to \Hcal$ as $v \to \infty$ in the Hausdorff metric on non-empty compact subsets of $\Mcal$. Additionally, $\mu^v \to \nu$ as 
$v \to \infty$ weakly as probability measures on $\Mcal$.
\end{prop}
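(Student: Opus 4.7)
The plan is to handle the two assertions in sequence, using the same underlying fact: that the metric $d$ on $\Mcal$ was engineered so that the embedded copies $\Gcal^v$ collapse onto $\Hcal$ through the projection $p_v$.

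For the Hausdorff convergence, I would fix $v \in [v_k, v_{k+1})$ and use the triangle inequality on $\Mcal$ together with the construction in Definition \ref{Mmetric}. Given any $x \in \Gcal^v$, Corollary \ref{Hausbd} provides $x' \in G^{v,n_k}$ with $R_v(x,x') \leq 1/(2k)$, while \eqref{dxpx} gives $d(x', p_{v,n_k}(x')) = c/(2(k+1))$, so $x$ lies within $1/(2k)+c/(2(k+1))$ of a point of $H^{n_k}\subseteq \Hcal$. The reverse direction is symmetric: any $y \in \Hcal$ is within $1/(2k)$ of some $y' \in H^{n_k}$, and since $p_{v,n_k}$ is surjective we can lift $y'$ to a point $x \in G^{v,n_k}\subseteq\Gcal^v$ at distance $c/(2(k+1))$. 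Both bounds tend to $0$ as $k \to\infty$, which is forced by $v\to\infty$.

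For the weak convergence of measures, take any $f \in C_b(\Mcal)$. Writing $\iota_v \colon F \to \Gcal^v \subseteq \Mcal$ for the natural embedding, one has $\mu^v = \iota_v{}_*\mu$ and $\nu = p_*\mu$, so
\begin{equation*}
\int f\,d\mu^v - \int f\,d\nu \;=\; \int_F \bigl(f(\iota_v(x)) - f(p(x))\bigr)\,d\mu(x).
\end{equation*}
The key reduction is to show $d(\iota_v(x), p(x)) \to 0$ pointwise on $F$. For fixed $x$, pick $w_k \in \Wbb_{n_k}$ with $x \in F_{w_k}$ and any $x'_k \in \psi_{w_k}(F^0) \subseteq G^{v,n_k}$, and split
\begin{equation*}
d(\iota_v(x), p(x)) \;\leq\; d(\iota_v(x), x'_k) \;+\; d(x'_k, p(x'_k)) \;+\; d(p(x'_k), p(x)).
\end{equation*}
The first term is bounded by $\diam(F_{w_k},R_v)$ via Lemma \ref{diamFw}, the middle term equals $c/(2(k+1))$ by \eqref{dxpx}, and the last is at most $\diam(\phi_{w_k}(F_*))$ by Lemma \ref{diamH}. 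Using Lemma \ref{Cbounds} to replace $\rho_{n_k}(v)^{-1}$ by a constant multiple of $\rho_G^{-n_k}$, all three pieces are dominated by $(r_{\max}/\rho_G)^{n_k}$ up to constants, and this goes to $0$ because $r_{\max}/\rho_G < 1$ by Proposition \ref{CGexist} and $n_k \to \infty$.

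Granted this pointwise convergence, continuity of $f$ yields $f(\iota_v(x)) - f(p(x)) \to 0$ for every $x$, and the envelope $2\|f\|_\infty$ is integrable against the probability measure $\mu$, so dominated convergence finishes the proof. The main delicacy is the bookkeeping for the bound $d(\iota_v(x), p(x)) \to 0$—in particular verifying that the three terms assembled through the triangle inequality all vanish at a rate controlled by the strict contraction $r_{\max} < \rho_G$—but otherwise each step is a direct appeal to a preceding result.
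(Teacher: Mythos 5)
Your proof is correct and follows essentially the same route as the paper: both assertions are reduced, via the sequences $(n_k)_k$, $(v_k)_k$ and the constant $c$ from Definition \ref{Mmetric}, to the bound \eqref{dxpx} together with the diameter estimates of Corollary \ref{Hausbd}, Lemma \ref{diamFw}, Lemma \ref{diamH} and Lemma \ref{Cbounds}. The only cosmetic differences are that you spell out the test-function/dominated-convergence step where the paper simply reduces weak convergence to $\sup_{x \in \Gcal^v} d(x,p(x)) \to 0$, and that your bound $d(p(x'_k),p(x)) \leq \diam(\phi_{w_k}(F_*))$ implicitly uses $p(F_{w_k}) = \phi_{w_k}(F_*)$, i.e.\ Corollary \ref{phip}, which should be cited explicitly.
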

\begin{proof}
Let $\delta$ be the Hausdorff metric on subsets of $\Mcal$. We take the sequences $(n_k)_{k \geq 0}$ and $(v_k)_{k \geq 0}$ and the 
constant $c > 0$ from Definition \ref{Mmetric}. Then for each $k \geq 0$ we have $n_k$ satisfying
\begin{equation*}
\begin{split}
\sup_{v \geq v_0}\delta(G^{v,n_k},\Gcal^v) &\leq \frac{1}{2k},\\
\delta(H^{n_k}, \Hcal) &\leq \frac{1}{2k}.
\end{split}
\end{equation*}
In addition, for all $v > v_k$ we have
\begin{equation*}
\delta(G^{v,n_k},H^{n_k}) \leq \frac{c}{2(k+1)}
\end{equation*}
by \eqref{dxpx}. Recall that we constructed the sequence $(v_k)_k$ to be strictly increasing with $\lim_{k \to \infty}v_k = \infty$, and so
\begin{equation*}
\lim_{v \to \infty} \delta(\Gcal^v,\Hcal) = 0.
\end{equation*}

We now tackle the weak convergence result. Since $\nu$ is the push-forward measure of $\mu^v$ with respect to $p_v$ for all 
$v \geq v_0$, it suffices to prove that
\begin{equation*}
\lim_{v \to \infty}\sup_{x \in \Gcal^v} d(x,p(x)) = 0.
\end{equation*}
Let $v \geq v_0$ and $x \in \Gcal^v$. There exists a unique $k \geq 0$ such that $v_k \leq v < v_{k+1}$. Let $w \in \Wbb_{n_k}$ 
such that $x \in F_w \subseteq \Gcal^v$, and pick some $y \in (G^{v,n_k} \cap F_w) \subseteq \Gcal^v$. Then Corollary~\ref{phip} 
implies that $p(x), p(y) \in \phi_w(F_*)$. Then by Lemma~\ref{diamFw}, Lemma \ref{diamH} and the construction of the metric $d$ we have that
\begin{equation*}
\begin{split}
d(x,p(x)) &\leq d(x,y) + d(y,p(y)) + d(p(y),p(x))\\
&\leq r^{n_k}_{\max}\rho_G^{-n_k}(k_\rho^{-1} \sup_{v' \geq v_0} \diam \Gcal^{v'} + \diam \Hcal) + \frac{c}{2(k+1)}.
\end{split}
\end{equation*}
Therefore
\begin{equation*}
\sup_{x \in \Gcal^v} d(x,p(x)) \leq r^{n_k}_{\max}\rho_G^{-n_k}(k_\rho^{-1} \sup_{v' \geq v_0} \diam \Gcal^{v'} + \diam \Hcal) + 
\frac{c}{2(k+1)}.
\end{equation*}
Proposition~\ref{CGexist} states that $r_{\max}\rho_G^{-1} < 1$. Lemma~\ref{diambound} and Lemma~\ref{diamH} respectively 
state that $\sup_{v' \geq v_0} \diam \Gcal^{v'} < \infty$ and $\diam \Hcal < \infty$. By the construction of the metric we have that 
$n_k \to \infty$ and $v_k \to \infty$ as $k \to \infty$. The choice of $k$ is such that $k \to \infty$ as $v \to \infty$. Therefore
\begin{equation*}
\lim_{v \to \infty}\sup_{x \in \Gcal^v} d(x,p(x)) = 0.
\end{equation*}
\end{proof}
We are now in a position to state and prove the main theorem of the paper. All of the assumptions needed are described at the start of this section.

\begin{thm}\label{thm:main}
Let $x \in F_*$ and for each $v \geq v_0$ let $x_v \in p^{-1}(x) \subseteq \Gcal^v$. Then
\begin{equation*}
\left( X^v, \Pbb^{x_v} \right) \to \left( X^*, \Pbb^x \right)
\end{equation*}
as $v \to \infty$ weakly as random variables in $\Dcal_\Mcal[0,\infty)$, the Skorokhod space of c\`{a}dl\`{a}g processes on $\Mcal$.
\end{thm}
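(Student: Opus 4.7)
The plan is to apply \cite[Theorem 7.1]{croydon2016a} directly in the common metric space $\Mcal$ constructed in Section 5. That theorem gives weak convergence of the diffusions associated with a sequence of compact resistance metric spaces, provided the spaces themselves converge in the Hausdorff sense inside a common ambient space, the associated symmetric probability measures converge weakly on that ambient space, and the starting points converge. All of the ingredients have essentially been arranged in the preceding sections.

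First I would record the three principal hypotheses. For compactness: each $\Gcal^v$ is a compact resistance metric space by Theorem \ref{nfpDform}, since the resistance topology agrees with the original compact topology on $F$; likewise $\Hcal$ is compact by Theorem \ref{F*Dform}. The Hausdorff convergence $\Gcal^v \to \Hcal$ in $\Mcal$ is the first conclusion of Proposition \ref{GHP}, and the weak convergence $\mu^v \to \nu$ of the symmetric measures is the second conclusion, noting that by the construction of the Dirichlet forms these are precisely the symmetric measures of $X^v$ and $X^*$ viewed as Borel probability measures on $\Mcal$.

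Next I would handle the starting points. Because $p(x_v) = x$, we have
\begin{equation*}
d(x_v, x) \;=\; d(x_v, p(x_v)) \;\leq\; \sup_{y \in \Gcal^v} d(y, p(y)),
\end{equation*}
and the right-hand side tends to zero as $v \to \infty$ by the explicit estimate derived in the proof of Proposition \ref{GHP}. Hence $x_v \to x$ in $\Mcal$, as required for Croydon's result.

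The main technical point, rather than an obstacle, is verifying that the framework in \cite{croydon2016a} lines up with our set-up. By Definition \ref{Mmetric} the ambient metric $d$ restricts to the resistance metric $R_v$ on each $\Gcal^v$ and to the resistance metric of $(\Ecal_*,D_*)$ on $\Hcal$; consequently the Dirichlet forms driving $X^v$ and $X^*$ are exactly the resistance forms compatible with the ambient metric on each subspace. Uniform boundedness of the diameters (Lemma \ref{diambound}, Lemma \ref{diamH}) ensures no pathology at infinity. With this identification, \cite[Theorem 7.1]{croydon2016a} applies verbatim and delivers the claimed weak convergence $(X^v, \Pbb^{x_v}) \to (X^*, \Pbb^x)$ in $\Dcal_\Mcal[0,\infty)$.
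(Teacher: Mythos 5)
Your overall strategy coincides with the paper's: feed the Hausdorff convergence and weak measure convergence of Proposition \ref{GHP}, together with the convergence of starting points, into \cite[Theorem 7.1]{croydon2016a}. Your treatment of the starting points is correct and is exactly what the paper does implicitly: since $p(x_v)=x$, the bound $d(x_v,x)\leq \sup_{y\in\Gcal^v}d(y,p(y))\to 0$ follows from the estimate in the proof of Proposition \ref{GHP}.

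There is, however, a genuine gap in the claim that the theorem ``applies verbatim'' in $\Mcal$. The space $\Mcal$ is a disjoint union over the \emph{continuum} of parameters $v\geq v_0$, and by construction every point of $\Gcal^v$ with $v\in[v_k,v_{k+1})$ lies at distance at least $\frac{c}{2(k+1)}$ from $\Hcal$; hence any two points lying in distinct copies $\Gcal^{v_1}$, $\Gcal^{v_2}$ with $v_1,v_2$ in the same interval $[v_k,v_{k+1})$ are at distance at least $\frac{c}{k+1}$ from each other. This produces an uncountable, uniformly separated subset of $\Mcal$, so $\Mcal$ is neither compact nor even separable, and it is not an ambient space in which the spatial Gromov--Hausdorff--vague framework of \cite[Section 7]{croydon2016a} (nor standard weak convergence on a Skorokhod space) can be invoked directly. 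The paper's proof deals with precisely this point: it first reduces to an arbitrary countable sequence $u_m\to\infty$, forms the subspace $\Mcal'=\Hcal\sqcup\bigsqcup_{m\geq 0}\Gcal^{u_m}$, and then proves --- and this sequential-compactness argument is the bulk of the written proof --- that $\Mcal'$ is compact, so that \cite[Theorem 7.1]{croydon2016a} together with \cite[Remark 1.3(b)]{croydon2016a} applies there, after which the convergence transfers to $\Dcal_\Mcal[0,\infty)$. Your proposal needs this reduction and the compactness of $\Mcal'$ to be supplied before the citation of Croydon's theorem is legitimate; the remaining ingredients you list are all correctly identified.
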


\begin{proof}
It suffices to prove convergence on a countable sequence $(u_m)_{m=0}^\infty$ such that $u_m \geq v_0$ for all $m$ and 
$\lim_{m \to \infty}u_m = \infty$. Let
\begin{equation*}
\Mcal' := \Hcal \sqcup \bigsqcup_{m \geq 0} \Gcal^{u_m} \subset \Mcal,
\end{equation*}
in which $\Hcal$ and all the $\Gcal^{u_m}$ are isometrically embedded. This inherits the metric $d$ from $\Mcal$ but is much more 
manageable, indeed it is compact. We prove sequential compactness. Let $(x_i)_i$ be a sequence in $\Mcal'$. If an infinite number of 
the $x_i$ lie in a single $\Gcal^{u_m}$ or in $\Hcal$, then all of these individual subspaces are compact so we can take a convergent 
subsequence. Therefore assume that this is not the case. By taking a subsequence, we can assume that for each $i$, $x_i \in \Gcal^{u_{m_i}}$ 
where $(u_{m_i})_i$ is a strictly increasing sequence. Observe that $\lim_{i \to \infty}u_{m_i} = \infty$. For each $i$, using the sequences 
$(n_k)_k$ and $(v_k)_k$ and the constant $c > 0$ defined in Definition~\ref{Mmetric}, pick the unique $k_i \geq 0$ such that 
$v_{k_i} < u_{m_i} \leq v_{k_i + 1}$. The sequence $(k_i)_i$ is thus non-decreasing and increases up to infinity. For each $i$, pick 
$w_i \in \Wbb_{n_{k_i}}$ such that $x_i \in F_{w_i}$ and then pick some $y_i \in F^{n_{k_i}}_{w_i} \subseteq \Gcal^{u_{m_i}}$. 
Then $d(x_i,y_i) \leq \diam (F_{w_i}, R_{u_{m_i}})$ and $d(y_i,p(y_i)) = \frac{c}{2(k_i+1)}$, by the construction of $\Mcal$ in 
Definition~\ref{Mmetric}. By Lemma~\ref{Cbounds}, Lemma~\ref{diamFw} and the fact that $n_{k_i} \to \infty$, we thus have that
\begin{equation*}
\lim_{i \to \infty}d(x_i,p(y_i)) \leq \lim_{i \to \infty} \left( k_\rho^{-1}(r_{\max}\rho_G^{-1})^{n_{k_i}} \sup_{v' \geq v_0} \diam \Gcal^{v'} 
+ \frac{c}{2(k_i+1)} \right) = 0.
\end{equation*}
Now $(p(y_i))_i$ is a sequence in $\Hcal$, which is a compact metric space, and so has a subsequential limit. By taking a subsequence, 
we can assume that $p(y_i) \to z \in \Hcal$. Thus $x_i \to z$ as well, and so we have sequential compactness.

Now to prove convergence of processes we use \cite[Theorem 7.1]{croydon2016a}. The method of proof of weak convergence in 
Proposition~\ref{GHP} implies that $x_{u_m} \to x$ as $m \to \infty$ in $\Mcal'$. Then Proposition~\ref{GHP} and the compactness 
of the spaces $\Gcal^{u_m}$ and $\Hcal$ immediately implies that $(\Gcal^{u_m},R_{u_m},\mu^{u_m},x_{u_m}) \to (\Hcal,d,\nu,x)$ 
as $m \to \infty$ in the \textit{spatial Gromov-Hausdorff-vague} topology (see \cite[Section 7]{croydon2016a}) on the compact space 
$\Mcal'$. Thus the conditions of \cite[Theorem 7.1]{croydon2016a} are satisfied (see \cite[Remark 1.3(b)]{croydon2016a}), and so 
we conclude that
\begin{equation*}
\left( X^{u_m}, \Pbb^{x_{u_m}} \right) \to \left( X^*, \Pbb^x \right)
\end{equation*}
as $m \to \infty$ as random elements of $\Dcal_{\Mcal'}[0,\infty)$, the Skorokhod space of c\`{a}dl\`{a}g processes on $\Mcal'$. 
Therefore this convergence also occurs in $\Dcal_\Mcal[0,\infty)$.
\end{proof}

\subsection{The Sierpinski gasket}

Finally we give the proof of the result stated in the introduction for the Sierpinski gasket.
Calculations with traces give that, for the Sierpinski gasket with conductances as in Figure~\ref{fig:gasblob},
\begin{equation}
\alpha(v) = (3v^2+6v+1)/(4v+6)\;\;\mbox{and} \;\; \rho(v) = (3v+2)/(2v+1), \label{eq:gasmap}
\end{equation}
and hence $\rho_G=3/2$ and $\beta=4/3$ giving a one-parameter iterable family which is also $\mathbb{D}$-injective. As $v\to\infty$, by Theorem~\ref{S*}, the Sierpinski gasket becomes the shorted gasket of Figure~\ref{fig:gasblob}.


\begin{proof}[Proof of Theorem~\ref{thm:sg}]
%
Recall that we have $X^{a,v}_0=p_2=(1/2,\sqrt{3}/2)$.
We let $L=L_0$ denote the line segment from $p_1=(0,0)$ to $p_3=(1,0)$ and let 
\[ L_n = \psi_2^n(L), \]
denote the line segment which forms the base of the triangle of side $2^{-n}$, with top vertex at $p_2$. 
Now let $T_{L_n}(X) = \inf\{t> 0: X_t \in L_n\}$, that is
the hitting time of the base of the top triangle. 
If we set 
\[ \tilde{X}^{v,n}_t = \psi_2^{-n}(X^v_t)), \;\; 0\leq t\leq T_{L_n}(X^{v}), \]
then we have a process on the Sierpinski gasket, $G$, run until it hits the base line  $L$, 
which has the same paths as the process in which the effective conductors on the graph $G_0$ associated to $G$ are given by 
$(\rho_n(v), \rho_n(v), \rho_n(v)\alpha^{-n}(v))$. Thus it is the same process in law as $X^{\alpha^{-n}(v)}_t, 0\leq t\leq 
T_L(X^{\alpha^{-n}(v)})$ up to a time change. This time change is $3^n\rho_n(v)$ to take into account the resistance scaling 
and the scaling in the invariant measure. Hence the law of $\tilde{X}^{v,n}_{t / 3^n\rho_n(v)}=X^{\alpha^{-n}(v)}_t$ for 
$0\leq t\leq T_L(X^{\alpha^{-n}(v)})$.

We recall that $1$ denotes the base point of the shorted gasket.
From \eqref{eq:gasmap} we can apply Theorem \ref{thm:main} to see that $X^{\alpha^{-n}(v)} \to X^s$ weakly giving the result that
\[ \{ \psi_2^{-n}(X^v_{t/3^n\rho_n(v)}), \;\; 0\leq t\leq T_{L_n}(X^{v})/3^n\rho_n(v) \} \to \{X^s_t, \;\; 0\leq t \leq T_1(X^s) \}, \]
weakly in $\Dcal_\Mcal[0,\infty)$.

Finally we observe that these are deterministic time changes and by \eqref{eq:rholim} we have the existence of a constant 
$\sigma = \lim_{n\to\infty} 3^n\rho_n(v)/3^n\rho_G^n>0$. Hence, as our processes are almost surely continuous, 
we have the weak convergence 
\[ \{ \psi_1^{-n}(X^v_{t/3^n\rho_G^n}), \;\; 0\leq t\leq T_{L_n}(X^{v})/3^n\rho_G^n \} \to \{X^s_{\sigma t}, 
\;\; 0\leq t \leq T_1(X^s) \}. \]
As $\rho_G=3/2$ the ultraviolet scaling factor is given by $\lambda=3\rho_G=9/2$ as required.
\end{proof}

\begin{rem}
We make the following observations:
\begin{enumerate}
\item It is straightforward to extend the analysis here to many fractals based on the $d$-dimensional tetrahedron. For instance
the examples mentioned in \cite{hambly1998}.
\item In the case of the (weighted) Vicsek set,  with resistance weights $r=(1,1,1,1,s)$ as shown in Figure~\ref{fig:vicsek}, the analysis does not hold.  Calculations with traces give that
\[ \rho_s(v) =s+\frac{4}{1+v}, \;\; \alpha_s(v)=\frac{sv+2}{s+2}. \]
Thus $\rho_G=s, \beta=1+2/s$ and, even though $\beta>1$ for all $s>0$, we can see that this is not asymptotically regular as 
$\rho_G^{-1} r_{\max} \geq 1$ for any $s>0$ (the $s=1$ case was discussed in \cite{hambly1998}).
\item In \cite{hattori97} a version of the asymptotically one-dimensional process is constructed on a 
scale irregular Sierpinski gasket. This class of fractals was analysed in \cite{ham92, barham97}, where
an analogue of the fixed point diffusion was constructed. Our approach developed here could be applied to
yield a scale irregular `shorted gasket' as the short time asymptotic limit for the asymptotically one-dimensional
process on the scale irregular Sierpinski gasket.
\item The spectral dimension of the limit fractal for the Sierpinski gasket is $\log{9}/\log{9/2}$ which is the local spectral dimension of the
asympotically one-dimensional diffusion on the Sierpinski gasket~\cite{hambly2002}. We conjecture that the local spectral dimension for 
the non-fixed point diffusion should be the spectral dimension for the limiting fractal in general.
\end{enumerate}
\end{rem}

\begin{figure}[ht]
\centerline{\includegraphics[height=1in]{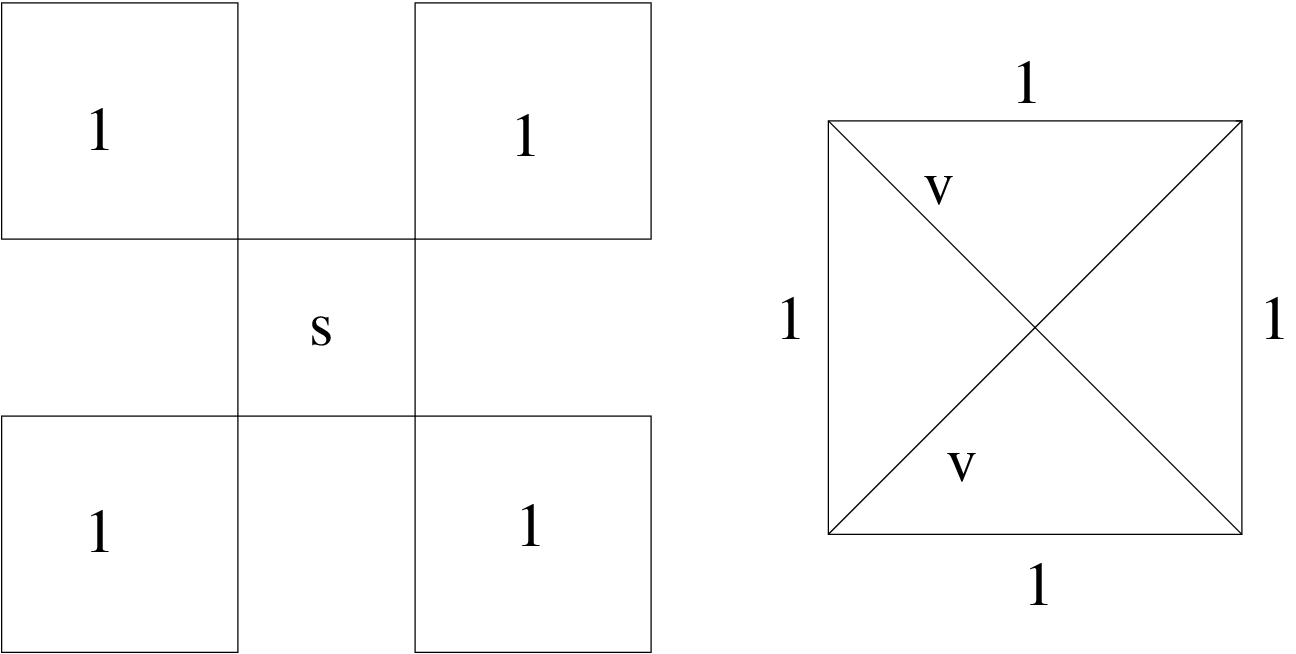}}
\caption{The Vicsek set with resistance weights $r=(1,1,1,1,s)$ is not asymptotically regular for any $s>0$.}
\label{fig:vicsek}
\end{figure}  

\bibliographystyle{plain}

\begin{thebibliography}{10}

\bibitem{barlow1998}
M.~T. Barlow,
\newblock Diffusions on fractals,
\newblock in: {\em Lectures on Probability Theory and
  Statistics} (P. Bernard, ed.). Springer, Berlin, 1998.

\bibitem{barham97}
M.~T. Barlow and B.~M. Hambly,
\newblock Transition density estimates for scale irregular {S}ierpinski gaskets,
\newblock {\em  Ann. Inst. H. Poincaré Probab. Statist}. \textbf{33} (1997), 531--557.

\bibitem{barlow1988}
M.~T. Barlow and E.~A. Perkins,
\newblock Brownian motion on the {S}ierpinski gasket,
\newblock {\em Probability Theory and Related Fields} \textbf{79} (1988), 543--623.

\bibitem{burago2001}
D.~Burago, Y.~Burago, and S.~Ivanov,
\newblock {\em A course in metric geometry},
\newblock Amer. Math. Soc., Providence, RI, 2001.

\bibitem{croydon2016a}
D.~A. Croydon,
\newblock Scaling limits of stochastic processes associated with resistance
  forms,
\newblock {\em arXiv:1609.05666}, 2016.

\bibitem{croydon2016}
D.~A. Croydon, B.~M. Hambly, and T.~Kumagai,
\newblock Time-changes of stochastic processes associated with resistance
  forms,
\newblock {\em arXiv:1609.02120}, 2016.

\bibitem{croydon2012}
D.~A. Croydon,
\newblock Scaling limit for the random walk on the largest connected component of the critical random graph,
\newblock {\em Publ. Res. Inst. Math. Sci.} \textbf{48} (2012), 279--338.

\bibitem{Fukushima2010}
M.~Fukushima, Y.~Oshima, and M.~Takeda,
\newblock {\em Dirichlet forms and symmetric {M}arkov processes},
\newblock Walter de Gruyter \& Co, New York, 2010.

\bibitem{fukushima92}
M.~Fukushima,
\newblock Dirichlet forms, diffusion processes and spectral dimensions for
  nested fractals,
\newblock in: {\em Ideas and methods in mathematical analysis, stochastics, and
  applications ({O}slo, 1988)}, Cambridge Univ. Press,
  Cambridge, 1992, 151--161.

\bibitem{ham92}
B.~M. Hambly,
\newblock  Brownian motion on a homogeneous random fractal,
\newblock {\em Probability Theory and Related Fields} \textbf{94} (1992), 1--38. 

\bibitem{hambly2002}
B.~M. Hambly and O.D.~Jones,
\newblock Asymptotically one-dimensional diffusion on the {S}ierpinski gasket and
  multi-type branching processes with varying environment,
\newblock {\em Journal of Theoretical Probability} \textbf{15} (2002), 285--322.

\bibitem{hambly1998}
B.~M. Hambly and T.~Kumagai,
\newblock Heat kernel estimates and homogenization for asymptotically lower
  dimensional processes on some nested fractals,
\newblock {\em Potential Analysis} \textbf{8} (1998), 359--397.

\bibitem{hambly2010}
B.~M. Hambly and T.~Kumagai,
\newblock Diffusion on the scaling limit of the critical percolation cluster in
  the diamond hierarchical lattice,
\newblock {\em Communications in Mathematical Physics} \textbf{295} (2010), 29--69.

\bibitem{hattori97}
T.~Hattori,
\newblock Asymptotically one-dimensional diffusions on scale-irregular gaskets,
 \newblock  {\em J. Math. Sci. Univ. Tokyo} \textbf{4} (1997), 229--278.

\bibitem{hattori1994}
K.~Hattori, T.~Hattori, and H.~Watanabe,
\newblock Asymptotically one-dimensional diffusions on the {S}ierpinski gasket
  and the abc-gaskets,
\newblock {\em Probability Theory and Related Fields} \textbf{100} (1994), 85--116.

\bibitem{HW97}
T.~Hattori and H.~Watanabe,
\newblock Anisotropic random walks and asymptotically one-dimensional diffusion
  on {$abc$}-gaskets,
\newblock {\em J. Statist. Phys.} \textbf{88} (1997), 105--128.

\bibitem{heck1998}
M.~K. Heck,
\newblock Homogeneous diffusions on the {S}ierpinski gasket,
\newblock in: {\em
  {S}{\' e}minaire de {P}robabilit{\' e}s XXXII}, Springer, Berlin,
  1998, 86--107.

\bibitem{kigami93}
J.~Kigami,
\newblock Harmonic calculus on p.c.f.\ self-similar sets,
\newblock {\em Trans. Amer. Math. Soc.} \textbf{335} (1993), 721--755.

\bibitem{Kigami1995}
J.~Kigami,
\newblock Harmonic calculus on limits of networks and its application to
  dendrites,
\newblock {\em J. Funct. Anal.} \textbf{128} (1995), 48--86.

\bibitem{Kigami2001}
J.~Kigami,
\newblock {\em Analysis on Fractals},
\newblock Cambridge University Press, Cambridge, 2001.

\bibitem{kigami2012}
J.~Kigami,
\newblock Resistance forms, quasisymmetric maps and heat kernel estimates,
\newblock {\em Mem. Amer. Math. Soc.} \textbf{216} (2012).

\bibitem{kumagai1995}
T.~Kumagai,
\newblock Rotation invariance and characterization of a class of self-similar
  diffusion processes on the {S}ierpinski gasket,
\newblock in: {\em Algorithms, Fractals and Dynamics} (Y. Takahashi, ed.), Kluwer Academic / Plenum Publishers, New York, 1995, 131--142.

\bibitem{kusuoka89}
S.~Kusuoka,
\newblock Dirichlet forms on fractals and products of random matrices,
\newblock {\em Publ. Res. Inst. Math. Sci.} \textbf{25} (1989), 659--680.

\bibitem{lindstrom1990}
T.~Lindstr{\o}m,
\newblock Brownian motion on nested fractals,
\newblock {\em Mem. Amer. Math. Soc.} \textbf{83} (1990).

\bibitem{metz1996}
V.~Metz,
\newblock Renormalization contracts on nested fractals,
\newblock {\em Journal f\"ur die Reine und Angewandte Mathematik} \textbf{480} (1996), 161--175.

\bibitem{metz95}
V.~Metz,
\newblock Hilbert's projective metric on cones of {D}irichlet forms,
\newblock {\em J. Funct. Anal.} \textbf{127} (1995), 438--455.

\bibitem{munkres1974}
J.~Munkres,
\newblock {\em Topology, a first course},
\newblock Englewood Cliffs, N.J., Prentice-Hall, United Kingdom, 1974.

\bibitem{nyberg95}
S.~O. Nyberg,
\newblock Brownian motion on simple fractal spaces,
\newblock {\em Stochastics Stochastics Rep.} \textbf{55} (1995), 21--45.

\bibitem{peirone2013}
R.~Peirone,
\newblock Uniqueness of eigenforms on fractals,
\newblock {\em Mathematische Nachrichten} \textbf{287} (2013), 453--471.

\bibitem{peirone2014}
R.~Peirone,
\newblock Existence of self-similar energies on finitely ramified fractals,
\newblock {\em Journal d'Analyse Math{\' e}matique} \textbf{123} (2014), 35--94.

\bibitem{sabot1997}
C.~Sabot,
\newblock Existence and uniqueness of diffusions on finitely ramified
  self-similar fractals,
\newblock {\em Annales Scientifiques de l'{\' E}cole Normale Sup{\' e}rieure} \textbf{30} (1997), 605--673.
  
\bibitem{tep2008}  A.~Teplyaev,
\newblock Harmonic coordinates on fractals with finitely ramified cell structure,
\newblock {\em Canad. J. Math.} \textbf{60} (2008), 457–480. 

\end{thebibliography}


\end{document}